\PassOptionsToPackage{lowtilde}{url}		

\documentclass[11pt, reqno]{amsart}  
\usepackage{fancyhdr}
\usepackage{amsfonts}
\usepackage{amsmath} 
\usepackage{amssymb}
\usepackage{comment}
\usepackage{float}
\usepackage[dvipsnames]{xcolor}

\usepackage{orcidlink}

\definecolor{ikkonzome}{rgb}	{	0.9961	,	0.7569	,	0.7373	}
\definecolor{ishitake}{rgb}	{	0.9961	,	0.6941	,	0.7059	}
\definecolor{momo}{rgb}	        {	0.9961	,	0.6824	,	0.8039	}
\definecolor{kobai}{rgb}	{	0.9412	,	0.4235	,	0.5569	}
\definecolor{nakabeni}{rgb}	{	0.9451	,	0.2706	,	0.4941	}
\definecolor{sakura}{rgb}	{	0.9333	,	0.8353	,	0.8353	}
\definecolor{arazome}{rgb}	{	0.9725	,	0.7216	,	0.7843	}
\definecolor{usubeni}{rgb}	{	0.8471	,	0.4471	,	0.5451	}
\definecolor{hisame}{rgb}	{	0.7451	,	0.4039	,	0.4039	}
\definecolor{toki}{rgb}	        {	0.9569	,	0.6431	,	0.6353	}
\definecolor{sakuranezumi}{rgb}	{	0.6941	,	0.6039	,	0.6078	}
\definecolor{sango}	{rgb}	{	0.8471	,	0.4157	,	0.3725	}
\definecolor{akane}	{rgb}	{	0.7529	,	0.0118	,	0.3451	}
\definecolor{choshun}{rgb}	{	0.7490	,	0.5255	,	0.5255	}
\definecolor{karakurenai}{rgb}	{	0.7373	,	0.0118	,	0.2667	}
\definecolor{enji}{rgb}	        {	0.6275	,	0.0863	,	0.3176	}
\definecolor{keshiaka}{rgb}	{	0.6275	,	0.4275	,	0.4275	}
\definecolor{kokiake}{rgb}	{	0.5059	,	0.0706	,	0.2549	}
\definecolor{jinzamomi}{rgb}	{	0.9098	,	0.4549	,	0.4157	}
\definecolor{mizugaki}{rgb}	{	0.7294	,	0.5529	,	0.4784	}
\definecolor{umenezumi}{rgb}	{	0.5882	,	0.3922	,	0.3882	}
\definecolor{suoko}{rgb}        {	0.5843	,	0.2667	,	0.2431	}
\definecolor{akabeni}{rgb}	{	0.8039	,	0.0784	,	0.3725	}
\definecolor{shinshu}{rgb}	{	0.6431	,	0.0353	,	0.0000	}
\definecolor{azuki}{rgb}	{	0.5255	,	0.0235	,	0.0000	}
\definecolor{ginshu}{rgb}	{	0.7490	,	0.2824	,	0.0588	}
\definecolor{ebicha}{rgb}	{	0.4549	,	0.2706	,	0.2627	}
\definecolor{kuriume}{rgb}	{	0.5843	,	0.3804	,	0.4314	}
\definecolor{akebono}{rgb}	{	0.8902	,	0.5961	,	0.4941	}
\definecolor{hanezu}{rgb}	{	0.7882	,	0.5961	,	0.5373	}
\definecolor{sangoshu}{rgb}	{	0.8196	,	0.5059	,	0.4471	}
\definecolor{shozyohi}{rgb}	{	0.7686	,	0.0000	,	0.0000	}
\definecolor{shikancha}{rgb}	{	0.5569	,	0.3294	,	0.1882	}
\definecolor{kakishibu}{rgb}	{	0.6745	,	0.4078	,	0.3333	}
\definecolor{benikaba}{rgb}	{	0.7137	,	0.3373	,	0.2941	}
\definecolor{benitobi}{rgb}	{	0.6196	,	0.3176	,	0.2706	}
\definecolor{benihihada}{rgb}	{	0.5020	,	0.3137	,	0.2353	}
\definecolor{kurotobi}{rgb}	{	0.3176	,	0.2000	,	0.1490	}
\definecolor{benihi}{rgb}	{	0.8235	,	0.4745	,	0.1922	}
\definecolor{terigaki}{rgb}	{	0.8118	,	0.4627	,	0.1804	}
\definecolor{ake}{rgb}	        {	0.7804	,	0.3098	,	0.1725	}
\definecolor{edocha}{rgb}	{	0.6863	,	0.4353	,	0.2941	}
\definecolor{bengara}{rgb}	{	0.6392	,	0.1569	,	0.0196	}
\definecolor{hihada}{rgb}	{	0.5412	,	0.3412	,	0.2353	}
\definecolor{shishi}{rgb}	{	0.8549	,	0.6863	,	0.5961	}
\definecolor{araishu}{rgb}	{	0.9294	,	0.4902	,	0.4549	}
\definecolor{akago}{rgb}	{	0.8118	,	0.5765	,	0.4275	}
\definecolor{tokigaracha}{rgb}	{	0.7922	,	0.5255	,	0.3686	}
\definecolor{otan}{rgb}	        {	0.8157	,	0.4157	,	0.2235	}
\definecolor{komugi}{rgb}	{	0.8157	,	0.6549	,	0.5098	}
\definecolor{rakuda}{rgb}	{	0.6784	,	0.5255	,	0.4118	}
\definecolor{tsurubami}{rgb}	{	0.6275	,	0.4392	,	0.3961	}
\definecolor{ama}{rgb}	        {	0.7765	,	0.6902	,	0.5843	}
\definecolor{nikkei}{rgb}	{	0.7216	,	0.4667	,	0.3725	}
\definecolor{renga}{rgb}	{	0.6902	,	0.3765	,	0.3098	}
\definecolor{sohi}{rgb}   	{	0.8078	,	0.5098	,	0.2078	}
\definecolor{enshucha}{rgb}	{	0.6706	,	0.4275	,	0.1608	}
\definecolor{karacha}{rgb}	{	0.5765	,	0.4235	,	0.1490	}
\definecolor{kabacha}{rgb}	{	0.6353	,	0.3725	,	0.1569	}
\definecolor{sodenkaracha}{rgb}	{	0.5216	,	0.3490	,	0.1373	}
\definecolor{suzumecha}{rgb}	{	0.4745	,	0.3176	,	0.1255	}
\definecolor{kurikawacha}{rgb}	{	0.4078	,	0.2745	,	0.1098	}
\definecolor{momoshiocha}{rgb}	{	0.3490	,	0.2353	,	0.0902	}
\definecolor{tobi}{rgb}	        {	0.4353	,	0.3098	,	0.1412	}
\definecolor{kurumizome}{rgb}	{	0.6667	,	0.5333	,	0.3333	}
\definecolor{kaba}{rgb}	        {	0.8196	,	0.4588	,	0.1294	}
\definecolor{korosen}{rgb}	{	0.5059	,	0.3843	,	0.1608	}
\definecolor{kogecha}{rgb}	{	0.3333	,	0.2549	,	0.1098	}
\definecolor{kokikuchinashi}{rgb}	{	0.8078	,	0.5922	,	0.3490	}
\definecolor{araigaki}{rgb}	{	0.8157	,	0.5529	,	0.3176	}
\definecolor{taisha}{rgb}	{	0.6353	,	0.4196	,	0.2078	}
\definecolor{akashirotsurubami}{rgb}	{	0.8078	,	0.6118	,	0.4157	}
\definecolor{tonocha}{rgb}	{	0.5922	,	0.4039	,	0.2000	}
\definecolor{sencha}{rgb}	{	0.5255	,	0.3529	,	0.1490	}
\definecolor{sharegaki}{rgb}	{	0.8549	,	0.7098	,	0.4863	}
\definecolor{ko}{rgb}	        {	0.9529	,	0.8275	,	0.6627	}
\definecolor{usugaki}{rgb}	{	0.8627	,	0.7294	,	0.5294	}
\definecolor{koji}{rgb}	        {	0.8275	,	0.6039	,	0.2706	}
\definecolor{umezome}{rgb}	{	0.8667	,	0.7373	,	0.4235	}
\definecolor{beniukon}{rgb}	{	0.8549	,	0.5843	,	0.2549	}
\definecolor{chojicha}{rgb}	{	0.5569	,	0.4000	,	0.1098	}
\definecolor{kenpozome}{rgb}	{	0.3059	,	0.2667	,	0.0627	}
\definecolor{biwacha}{rgb}	{	0.7333	,	0.5294	,	0.1922	}
\definecolor{kohaku}{rgb}	{	0.8039	,	0.5961	,	0.2471	}
\definecolor{usuko}{rgb}	{	0.8745	,	0.7373	,	0.4824	}
\definecolor{kuchiba}{rgb}	{	0.8157	,	0.6157	,	0.3216	}
\definecolor{kincha}{rgb}	{	0.7725	,	0.4902	,	0.2000	}
\definecolor{chozizome}{rgb}	{	0.5686	,	0.3961	,	0.1608	}
\definecolor{kitsune}{rgb}	{	0.6392	,	0.4431	,	0.1804	}
\definecolor{hushizome}{rgb}	{	0.5804	,	0.4353	,	0.1608	}
\definecolor{kyara}{rgb}	{	0.4510	,	0.3373	,	0.1255	}
\definecolor{susutake}{rgb}	{	0.4588	,	0.3529	,	0.1176	}
\definecolor{shirocha}{rgb}	{	0.7529	,	0.6588	,	0.4118	}
\definecolor{odo}{rgb}	        {	0.7137	,	0.6039	,	0.3137	}
\definecolor{ginsusutake}{rgb}	{	0.5608	,	0.4745	,	0.2392	}
\definecolor{kigaracha}{rgb}	{	0.7490	,	0.6196	,	0.2745	}
\definecolor{kobicha}	{rgb}	{	0.5020	,	0.4118	,	0.1765	}
\definecolor{usuki}	{rgb}	{	0.8549	,	0.7804	,	0.4275	}
\definecolor{yamabuki}	{rgb}	{	0.9098	,	0.8000	,	0.2039	}
\definecolor{tamago}	{rgb}	{	0.8275	,	0.7412	,	0.3373	}
\definecolor{hajizome}	{rgb}	{	0.7412	,	0.6039	,	0.2431	}
\definecolor{yamabukicha}{rgb}	{	0.7059	,	0.5765	,	0.2275	}
\definecolor{kuwazome}	{rgb}	{	0.6471	,	0.5255	,	0.2118	}
\definecolor{namakabe}	{rgb}	{	0.5569	,	0.4549	,	0.1843	}
\definecolor{kuchinashi}{rgb}	{	0.8078	,	0.7333	,	0.3843	}
\definecolor{tomorokoshi}{rgb}	{	0.7804	,	0.7020	,	0.2941	}
\definecolor{shirotsurubami}{rgb}	{	0.8745	,	0.8235	,	0.5922	}
\definecolor{kitsurubami}{rgb}	{	0.6863	,	0.6039	,	0.2118	}
\definecolor{toou}	{rgb}	{	0.8353	,	0.7725	,	0.4235	}
\definecolor{hanaba}	{rgb}	{	0.8549	,	0.8000	,	0.4941	}
\definecolor{torinoko}	{rgb}	{	0.8431	,	0.8118	,	0.6902	}
\definecolor{ukon}	{rgb}	{	0.8039	,	0.7216	,	0.3059	}
\definecolor{kikuchiba}	{rgb}	{	0.7765	,	0.6667	,	0.2902	}
\definecolor{rikyushiracha}{rgb}{	0.6627	,	0.6471	,	0.4784	}
\definecolor{rikyucha}	{rgb}	{	0.5176	,	0.5176	,	0.2588	}
\definecolor{aku}	{rgb}	{	0.5294	,	0.5059	,	0.4039	}
\definecolor{higosusutake}{rgb}	{	0.4863	,	0.4510	,	0.3059	}
\definecolor{rokocha}	{rgb}	{	0.4157	,	0.4353	,	0.3686	}
\definecolor{mirucha}	{rgb}	{	0.4471	,	0.4471	,	0.3725	}
\definecolor{natane}	{rgb}	{	0.7216	,	0.6863	,	0.3882	}
\definecolor{kimirucha}	{rgb}	{	0.5373	,	0.5059	,	0.2549	}
\definecolor{uguisucha}	{rgb}	{	0.4118	,	0.3882	,	0.1961	}
\definecolor{nanohana}	{rgb}	{	0.9882	,	0.9882	,	0.3804	}
\definecolor{kariyasu}	{rgb}	{	0.8039	,	0.7686	,	0.3882	}
\definecolor{kihada}	{rgb}	{	0.9608	,	0.9137	,	0.2863	}
\definecolor{zoge}	{rgb}	{	0.8863	,	0.8235	,	0.7216	}
\definecolor{wara}	{rgb}	{	0.7882	,	0.7490	,	0.4706	}
\definecolor{macha}	{rgb}	{	0.6235	,	0.6392	,	0.4863	}
\definecolor{yamabato}	{rgb}	{	0.5137	,	0.5176	,	0.4000	}
\definecolor{mushikuri}	{rgb}	{	0.8196	,	0.8000	,	0.6118	}
\definecolor{aokuchiba}	{rgb}	{	0.6275	,	0.6392	,	0.3451	}
\definecolor{hiwacha}	{rgb}	{	0.6784	,	0.6863	,	0.4196	}
\definecolor{ominaeshi}	{rgb}	{	0.8745	,	0.8863	,	0.4039	}
\definecolor{wasabi}	{rgb}	{	0.5922	,	0.6824	,	0.5765	}
\definecolor{uguisu}	{rgb}	{	0.3333	,	0.4118	,	0.0627	}
\definecolor{hiwa}	{rgb}	{	0.7098	,	0.7216	,	0.2392	}
\definecolor{aoshirotsurubami}{rgb}	{	0.5961	,	0.6471	,	0.4471	}
\definecolor{yanagicha}	{rgb}	{	0.5373	,	0.5725	,	0.3529	}
\definecolor{rikancha}	{rgb}	{	0.3333	,	0.4196	,	0.2863	}
\definecolor{aikobicha}	{rgb}	{	0.2706	,	0.3412	,	0.2353	}
\definecolor{koke}	{rgb}	{	0.4941	,	0.5686	,	0.3922	}
\definecolor{miru}	{rgb}	{	0.1529	,	0.3216	,	0.1843	}
\definecolor{sensai}	{rgb}	{	0.1373	,	0.2863	,	0.1608	}
\definecolor{baiko}	{rgb}	{	0.5529	,	0.6157	,	0.3922	}
\definecolor{iwai}	{rgb}	{	0.3059	,	0.4118	,	0.2784	}
\definecolor{hiwamoegi}	{rgb}	{	0.4431	,	0.6824	,	0.2431	}
\definecolor{yanagisusutake}{rgb}	{	0.2039	,	0.3333	,	0.1255	}
\definecolor{urayanagi}	{rgb}	{	0.5843	,	0.6824	,	0.2431	}
\definecolor{usumoegi}	{rgb}	{	0.4824	,	0.6706	,	0.2275	}
\definecolor{yanagizome}{rgb}	{	0.4353	,	0.6000	,	0.2078	}
\definecolor{moegi}	{rgb}	{	0.3020	,	0.5961	,	0.1882	}
\definecolor{aoni}	{rgb}	{	0.1216	,	0.4196	,	0.2431	}
\definecolor{matsuba}	{rgb}	{	0.1098	,	0.3686	,	0.2118	}
\definecolor{usuao}	{rgb}	{	0.5529	,	0.7059	,	0.6118	}
\definecolor{wakatake}	{rgb}	{	0.3843	,	0.6824	,	0.5059	}
\definecolor{yanaginezumi}{rgb}	{	0.5020	,	0.6078	,	0.5490	}
\definecolor{oitake}	{rgb}	{	0.4157	,	0.5412	,	0.4392	}
\definecolor{sensaimidori}{rgb}	{	0.2549	,	0.3882	,	0.2627	}
\definecolor{midori}	{rgb}	{	0.0000	,	0.4824	,	0.0000	}
\definecolor{byakuroku}	{rgb}	{	0.6078	,	0.7333	,	0.6196	}
\definecolor{sabiseiji}	{rgb}	{	0.5333	,	0.6588	,	0.5804	}
\definecolor{rokusho}	{rgb}	{	0.3373	,	0.6039	,	0.4039	}
\definecolor{tokusa}	{rgb}	{	0.2549	,	0.4706	,	0.3922	}
\definecolor{onandocha}	{rgb}	{	0.2039	,	0.3725	,	0.3098	}
\definecolor{aotake}	{rgb}	{	0.1412	,	0.5176	,	0.4353	}
\definecolor{rikyunezumi}{rgb}	{	0.4157	,	0.5686	,	0.5490	}
\definecolor{birodo}	{rgb}	{	0.1059	,	0.4275	,	0.3608	}
\definecolor{mishiao}	{rgb}	{	0.1098	,	0.4588	,	0.3882	}
\definecolor{aimirucha}	{rgb}	{	0.2510	,	0.4000	,	0.3608	}
\definecolor{tonotya}	{rgb}	{	0.3059	,	0.4941	,	0.4392	}
\definecolor{mizuasagi}	{rgb}	{	0.1569	,	0.6745	,	0.6745	}
\definecolor{seji}	{rgb}	{	0.4745	,	0.6588	,	0.5922	}
\definecolor{seheki}	{rgb}	{	0.0588	,	0.5569	,	0.4824	}
\definecolor{sabitetsu}	{rgb}	{	0.0392	,	0.3294	,	0.2784	}
\definecolor{tetsu}	{rgb}	{	0.0627	,	0.3961	,	0.3961	}
\definecolor{omeshicha}	{rgb}	{	0.0667	,	0.4667	,	0.4667	}
\definecolor{korainando}{rgb}	{	0.0588	,	0.3922	,	0.0392	}
\definecolor{minatonezumi}{rgb}	{	0.4549	,	0.6000	,	0.6118	}
\definecolor{aonibi}	{rgb}	{	0.1804	,	0.3725	,	0.3882	}
\definecolor{tetsuonando}{rgb}	{	0.1961	,	0.4118	,	0.4275	}
\definecolor{mizu}	{rgb}	{	0.5451	,	0.7412	,	0.7608	}
\definecolor{sabiasagi}	{rgb}	{	0.4510	,	0.6000	,	0.6000	}
\definecolor{kamenozoki}{rgb}	{	0.7098	,	0.9020	,	0.8902	}
\definecolor{asagi}	{rgb}	{	0.3020	,	0.6784	,	0.7098	}
\definecolor{shinbashi}	{rgb}	{	0.0000	,	0.6000	,	0.6353	}
\definecolor{sabionando}{rgb}	{	0.2392	,	0.3961	,	0.3843	}
\definecolor{ainezumi}	{rgb}	{	0.2235	,	0.4000	,	0.3843	}
\definecolor{ai}	{rgb}	{	0.2039	,	0.3765	,	0.4314	}
\definecolor{onando}	{rgb}	{	0.1843	,	0.3686	,	0.4000	}
\definecolor{hanaasagi}	{rgb}	{	0.2000	,	0.6196	,	0.7098	}
\definecolor{chigusa}	{rgb}	{	0.1922	,	0.5725	,	0.6745	}
\definecolor{masuhana}	{rgb}	{	0.1569	,	0.4667	,	0.5569	}
\definecolor{hanada}	{rgb}	{	0.0039	,	0.4275	,	0.5373	}
\definecolor{noshimehana}{rgb}	{	0.1412	,	0.4235	,	0.5098	}
\definecolor{omeshionando}{rgb}	{	0.1176	,	0.3529	,	0.4235	}
\definecolor{sora}	{rgb}	{	0.1451	,	0.7216	,	0.8039	}
\definecolor{konpeki}	{rgb}	{	0.0902	,	0.5098	,	0.7333	}
\definecolor{kurotsurubami}{rgb}{	0.0627	,	0.3137	,	0.3451	}
\definecolor{gunjo}	{rgb}	{	0.5098	,	0.7882	,	0.9098	}
\definecolor{kon}	{rgb}	{	0.0000	,	0.2000	,	0.4000	}
\definecolor{kachi}	{rgb}	{	0.0000	,	0.1765	,	0.3490	}
\definecolor{ruri}	{rgb}	{	0.0078	,	0.3922	,	0.6510	}
\definecolor{konjo}	{rgb}	{	0.0039	,	0.3137	,	0.5216	}
\definecolor{rurikon}	{rgb}	{	0.0000	,	0.3020	,	0.5020	}
\definecolor{benimidori}{rgb}	{	0.3373	,	0.4588	,	0.6784	}
\definecolor{konkikyo}	{rgb}	{	0.0039	,	0.0667	,	0.4275	}
\definecolor{hujinezumi}{rgb}	{	0.3216	,	0.3686	,	0.6118	}
\definecolor{benikakehana}{rgb}	{	0.2275	,	0.2471	,	0.5882	}
\definecolor{hujiiro}	{rgb}	{	0.4706	,	0.4863	,	0.7059	}
\definecolor{hutaai}	{rgb}	{	0.3137	,	0.3333	,	0.5608	}
\definecolor{hujimurasaki}{rgb}	{	0.4157	,	0.3333	,	0.5686	}
\definecolor{kikyo}	{rgb}	{	0.3529	,	0.3216	,	0.5725	}
\definecolor{shion}	{rgb}	{	0.5137	,	0.4196	,	0.6784	}
\definecolor{messhi}	{rgb}	{	0.2196	,	0.1765	,	0.3098	}
\definecolor{shikon}	{rgb}	{	0.2510	,	0.1961	,	0.3529	}
\definecolor{kokimurasaki}{rgb}	{	0.3098	,	0.2000	,	0.3765	}
\definecolor{usu}	{rgb}	{	0.7294	,	0.6353	,	0.7843	}
\definecolor{hashita}	{rgb}	{	0.6000	,	0.4549	,	0.6706	}
\definecolor{rindo}	{rgb}	{	0.5922	,	0.5137	,	0.7529	}
\definecolor{sumire}	{rgb}	{	0.3882	,	0.2157	,	0.5922	}
\definecolor{nasukon}	{rgb}	{	0.3216	,	0.2235	,	0.4353	}
\definecolor{murasaki}	{rgb}	{	0.3137	,	0.1765	,	0.4824	}
\definecolor{kurobeni}	{rgb}	{	0.2353	,	0.1294	,	0.3059	}
\definecolor{ayame}	{rgb}	{	0.4275	,	0.1569	,	0.5098	}
\definecolor{benihuji}	{rgb}	{	0.6824	,	0.5333	,	0.6667	}
\definecolor{edomurasaki}{rgb}	{	0.3961	,	0.1529	,	0.4392	}
\definecolor{kodaimurasaki}{rgb}{	0.4353	,	0.2863	,	0.4627	}
\definecolor{shikon}{rgb}       {	0.4118	,	0.2549	,	0.3765	}
\definecolor{hatobanezumi}{rgb}	{	0.4235	,	0.3804	,	0.4392	}
\definecolor{budonezumi}{rgb}	{	0.3412	,	0.2196	,	0.3529	}
\definecolor{ebizome}	{rgb}	{	0.4000	,	0.2235	,	0.3882	}
\definecolor{hujisusutake}{rgb}	{	0.2549	,	0.0824	,	0.2471	}
\definecolor{usuebi}	{rgb}	{	0.6549	,	0.4392	,	0.6039	}
\definecolor{botan}	{rgb}	{	0.6706	,	0.0392	,	0.4353	}
\definecolor{umemurasaki}{rgb}	{	0.6667	,	0.3922	,	0.5176	}
\definecolor{nisemurasaki}{rgb}	{	0.3686	,	0.0000	,	0.3216	}
\definecolor{murasakitobi}{rgb}	{	0.3922	,	0.3255	,	0.3529	}
\definecolor{ususuo}{rgb}	{	0.7569	,	0.4000	,	0.5412	}
\definecolor{suo}{rgb}	        {	0.6941	,	0.0627	,	0.4157	}
\definecolor{kuwanomi}{rgb}	{	0.4196	,	0.0549	,	0.2745	}
\definecolor{nibi}{rgb}	        {	0.4549	,	0.4235	,	0.3961	}
\definecolor{benikeshi}{rgb}	{	0.4118	,	0.3804	,	0.3843	}
\definecolor{shironeri}{rgb}	{	0.9843	,	0.9843	,	0.9843	}
\definecolor{shironezumi}{rgb}	{	0.6471	,	0.6588	,	0.6627	}
\definecolor{ginnezumi}{rgb}	{	0.5255	,	0.5490	,	0.5922	}
\definecolor{sunezumi}{rgb}	{	0.4275	,	0.4392	,	0.4392	}
\definecolor{dobunezumi}{rgb}	{	0.2863	,	0.2941	,	0.2941	}
\definecolor{aisumicha}{rgb}	{	0.2078	,	0.2196	,	0.2510	}
\definecolor{binrojizome}{rgb}	{	0.2118	,	0.0824	,	0.0706	}
\definecolor{sumizome}{rgb}	{	0.2706	,	0.2706	,	0.2706	}

\usepackage{amsthm}
\usepackage{abraces}
\usepackage{mathtools}                          
\usepackage{mathrsfs}				    
\usepackage{verbatim}				    
\usepackage[all]{xy}    	          
\SelectTips{cm}{11}                  
\usepackage[T1]{fontenc}        
\usepackage[normalem]{ulem}          
\usepackage{microtype}               
\usepackage{tikz-cd}
\usepackage{graphicx, subcaption}	

\usepackage{enumitem}
\setenumerate[1]{label=(\roman*),ref=\thesubsection.{\roman*}}    

\usepackage[margin=1in]{geometry}

\usepackage{hyperref}
\hypersetup{
  colorlinks   = true,          
  urlcolor     = blue,          
  linkcolor    = RoyalBlue,          
  citecolor   = magenta             
}

\usepackage{pdfsync}
\usepackage{dsfont}
\usepackage{prettyref}

\newcommand{\pref}{\prettyref}

\newrefformat{th}{Theorem~\ref{#1}}
\newrefformat{cr}{Corollary~\ref{#1}}
\newrefformat{lm}{Lemma~\ref{#1}}
\newrefformat{dl}{Definition-Lemma~\ref{#1}}
\newrefformat{pd}{Proposition-Definition~\ref{#1}}
\newrefformat{td}{Theorem-Definition~\ref{#1}}
\newrefformat{df}{Definition~\ref{#1}}
\newrefformat{ft}{Fact~\ref{#1}}
\newrefformat{cl}{Claim~\ref{#1}}
\newrefformat{sl}{Sublemma~\ref{#1}}
\newrefformat{pr}{Proposition~\ref{#1}}
\newrefformat{cj}{Conjecture~\ref{#1}}
\newrefformat{st}{Step~\ref{#1}}
\newrefformat{sc}{\S~\ref{#1}}
\newrefformat{ss}{\S~\ref{#1}}
\newrefformat{df}{Definition~\ref{#1}}
\newrefformat{rm}{Remark~\ref{#1}}
\newrefformat{q}{Question~\ref{#1}}
\newrefformat{pb}{Problem~\ref{#1}}
\newrefformat{cd}{Condition~\ref{#1}}
\newrefformat{eg}{Example~\ref{#1}}
\newrefformat{he}{Heore~\ref{#1}}
\newrefformat{fg}{Figure~\ref{#1}}
\newrefformat{tb}{Table~\ref{#1}}
\newrefformat{as}{Assumption~\ref{#1}}
\newrefformat{ap}{Appendix~\ref{#1}}
\newrefformat{nt}{Notation~\ref{#1}}
\newcommand{\Z}{{\mathbb{Z}}}
\newcommand{\Q}{{\mathbb{Q}}}
\newcommand{\R}{\mathbb{R}}			
\newcommand{\C}{\mathbb{C}}

\newcommand{\PP}{{\mathbb P}}

\newcommand{\calL}{{\mathcal L}}

\newcommand{\calN}{{\mathcal N}}
\newcommand{\calO}{{\mathcal O}}

\newcommand{\calX}{{\mathcal X}}

\newcommand{\frakG}{{\mathfrak G}}

\newcommand{\scrA}{\mathscr A}
		
\newcommand{\scrM}{\mathscr M}

\newcommand{\scrH}{\mathscr H}

\newcommand{\scrX}{\mathscr X}

\newcommand{\CK}{{\mathbb{C}_K}} 

\newcommand{\id}{\mathrm{id}}
\newcommand{\FS}{\mathrm{FS}}
\newcommand{\diag}{\mathrm{diag}}
\newcommand{\str}{\mathrm{str}}
\newcommand{\surj}{\twoheadrightarrow}
\newcommand{\inj}{\hookrightarrow}

\newcommand{\an}{\mathrm{an}}
\newcommand{\hyb}{\mathrm{hyb}}
\newcommand{\trop}{\mathrm{trop}}

\newcommand{\Img}{\mathrm{Im}}

\newcommand\wc{{\mkern 2mu\cdot\mkern 2mu}}
\newcommand\va{|\wc|}
\newcommand\nm{\|\wc\|}

\newcommand{\CPSH}{\mathrm{CPSH}}	
\newcommand{\MA}{\mathrm{MA}}

\DeclareMathOperator{\Spec}{Spec}

\DeclareMathOperator{\Hom}{Hom}
\DeclareMathOperator{\SL}{SL}
\DeclareMathOperator{\GL}{GL}
\DeclareMathOperator{\supp}{supp}

\DeclareMathOperator{\coker}{coker}

\DeclareMathOperator{\Aut}{Aut}

\DeclareMathOperator{\Sk}{Sk}

\DeclareMathOperator{\codim}{codim}

\newtheoremstyle{plain2}    
  {}            
  {}            
  {\itshape}    
  {}            
  {\bfseries}   
  {.}           
  {5pt plus 1pt minus 1pt}  
  {{\thmnumber{(#2)} \thmname{#1}{\thmnote{ (#3)}}}}          

\newtheorem{Thm}{Theorem}[subsection]
\newtheorem*{Thm*}{Theorem}
\newtheorem*{Cor*}{Corollary}
\newtheorem{Cor}[Thm]{Corollary}
\newtheorem{Lem}[Thm]{Lemma}
\newtheorem{Prop}[Thm]{Proposition}

\newtheorem{Theorem}{Theorem}

\theoremstyle{definition}
\newtheorem*{Def*}{Definition}

\newtheoremstyle{definition2}    
  {}   
  {}   
  {\normalfont}  
  {}       
  {\bfseries} 
  {.}        
  {5pt plus 1pt minus 1pt} 
  {{(\thmnumber{#2}) \thmname{#1}{\thmnote{#3}}}}          

\theoremstyle{definition}
\newtheorem{Def}[Thm]{Definition}

\newtheorem{example}[Thm]{Example}

\newtheorem{Q.}[Thm]{Question}
\theoremstyle{remark}
\newtheorem{Rem}[Thm]{Remark}
\newtheorem*{Rem*}{Remark}

\newtheoremstyle{stepstyle}
  {}     {}   
  {\normalfont}  
  {\parindent}       
  {\itshape} 
  {}         
  {5pt plus 1pt minus 1pt} 
  {{\thmname{#1} \thmnumber{#2}:{\thmnote{#3}}}}          
\theoremstyle{stepstyle}

\newtheoremstyle{point}
  {}     {}   
  {\normalfont}  
  {}       
  {\bfseries} 
  {}         
  {5pt plus 1pt minus 1pt} 
  {{\thmname{#1}\thmnumber{#2}.\thmnote{ #3.}}}          
\theoremstyle{point}
\newtheorem{point}[subsection]{}

\newtheoremstyle{point*}
  {}     {}   
  {\normalfont}  
  {}       
  {\bfseries} 
  {}         
  {5pt plus 1pt minus 1pt} 
  {{\thmname{#1}\thmnote{ #3.}}}          
\theoremstyle{point*}
\newtheorem{point*}[subsubsection]{}

\numberwithin{equation}{subsection}

\newtheoremstyle{subpoint}
  {}     {}            
  {\normalfont}  
  {}                   
  {\normalfont} 
  {}         
  {5pt plus 1pt minus 1pt} 
  {{\thmname{#1}(\thmnumber{#2})\thmnote{ #3.}}}          
\theoremstyle{subpoint}
\newtheorem{subpoint}[equation]{}

\usepackage{etoolbox}
\makeatletter
\patchcmd{\@maketitle}
  {\ifx\@empty\@dedicatory}
  {\ifx\@empty\@date \else {\vskip3ex \centering\footnotesize\@date\par\vskip1ex}\fi
   \ifx\@empty\@dedicatory}
  {}{}
\patchcmd{\@adminfootnotes}
  {\ifx\@empty\@date\else \@footnotetext{\@setdate}\fi}
  {}{}{}
\makeatother

\title[NA balanced metrics and their application to totally degenerate abelian varieties]{
Non-Archimedean balanced metrics and their application to totally degenerate abelian varieties
}

\author{\ \ Keita Goto${}^\mathrm{\orcidlink{0000-0002-6656-5253}}$}
\address[Keita~Goto]{Division of Fundamental Mathematical Science, 
RIKEN Center for Interdisciplinary Theoretical and Mathematical Sciences (iTHEMS),
2-1 Hirosawa, Wako, Saitama 351-0198,
JAPAN
}
\email{\href{k.goto.math@gmail.com}{k.goto.math@gmail.com}}
\urladdr{\url{https://sites.google.com/view/keitagoto}}
\keywords{non-Archimedean balanced metrics, non-Archimedean Calabi--Yau metrics, theta functions} 
\subjclass[2020]{Primary 14K25; Secondary 14G22, 32P05, 32Q26, 32U15}

\begin{document}
\begin{abstract}
For a polarized complex manifold with discrete automorphism group, it is known that if the first Chern class admits a cscK metric, then the balanced metrics, which are characterized in terms of the algebro-geometric notion of Chow stability, approximate this cscK metric.
In this paper, we study a non-Archimedean analogue of this phenomenon. In particular, we prove that such an analogue holds for polarized totally degenerate abelian varieties. As an application, we also show that, 
for a totally degenerating family of polarized abelian varieties,
the validity of this non-Archimedean analogue yields a uniform estimate for the Calabi--Yau metrics on fibers sufficiently close to the degenerate fiber.

\end{abstract}

\maketitle

\setcounter{tocdepth}{1}
\tableofcontents

\section{Introduction}

Let $X$ be a smooth Calabi--Yau variety over $K:=\C((t))$ with maximal degeneration.
Here, $K$ is a non-Archimedean field equipped with a $t$-adic valuation $\va_K$ such that $|t|_K:=e^{-1}$. 
Then we may consider a non-Archimedean analytic space $X^\an$ associated to $X$ in the sense of \cite{Ber90}.
The essential skeleton $\Sk (X) \subset X^\an$, which is introduced in \cite{KS06}, is defined as the locus where the weight function on $X^\an$
, which generalizes the log discrepancy, attains its minimum; see \cite{MN15}.
By \cite{NXY19}, if $X$ admits semistable reduction, then $\Sk (X) \subset X^\an$ carries a canonical probability measure $\mu$ induced by a non-canonical integral affine structure on $\Sk (X)\setminus \Gamma,$ where $\codim \Gamma \geq 2$.

As we will briefly recall in \pref{ss: NA pluripotential theory}, the framework of \emph{non-Archimedean pluripotential theory}, initiated by \cite{CLD12} and now well-established,
allows us to study metrics on such non-Archimedean analytic spaces; see \cite{BE21} for details.
In particular, as a non-Archimedean analogue of Calabi--Yau metrics,
for any ample line bundle $L$ on the above $X$,
we obtain a canonical metric on $L^\an$ called the
\emph{non-Archimedean Calabi--Yau metric}, abbreviated as the \emph{NACY metric} and denoted by $\phi^\mathrm{NACY}$.
More precisely, $\phi^\mathrm{NACY}$
is defined to be the solution to the following \emph{non-Archimedean Monge--Amp\`ere equation}:
$$
 \mathrm{MA}(\phi)=(L^{\dim X})\mu.
$$
 Here, by \cite[Theorem~A]{BFJ15}, the NACY metric $\phi^\mathrm{NACY}$ exists uniquely (up to additive constant).
In particular, note that $\phi^\mathrm{NACY}$ is defined analytically and is intrinsic to the pair $(X,L)$. The aim of this paper is to understand 
$\phi^\mathrm{NACY}$ from an algebraic perspective.

Recall that, for a polarized complex Calabi--Yau manifold $(X,L)$, 
 \cite[Theorem~3]{Don01} implies that 
the Calabi--Yau metric $\omega_{\phi^\mathrm{CY}}$ in the first Chern class $c_1(L)$ is approximated by balanced metrics $\omega_{\phi_k}$ at level $k$ as $k\to \infty$.
Here and throughout, we use the notation $\omega_\phi$ for the closed $(1,1)$-form associated with a Kähler potential $\phi$.
By \cite{Zha96} and \cite{Luo98}, such balanced metrics are characterized by Chow stability, which is an established notion in algebraic geometry.
In particular, we may apply the Kempf--Ness theory initiated in \cite{KN79} to find such balanced metrics. 
More precisely,
 the balanced metric $\omega_{\phi_k}$ can be characterized by a
variational problem for Hermitian norms on $H^0(X,kL)$ with a fixed determinant, 
namely, by minimizing a functional, called the Chow norm, over the space of norms.
As demonstrated in  \cite{Bur92}, \cite{Zha96} and \cite{Mac17},
this Kempf--Ness-type approach is particularly well suited to 
non-Archimedean pluripotential theory.
Hence, it is natural to consider a non-Archimedean analogue of balanced metrics.
In fact, Zhang had already implicitly considered such an analogue in
\cite{Zha96}, prior to the development of non-Archimedean pluripotential theory.
Later, within the framework of non-Archimedean pluripotential theory, Yanbo Fang
reformulated this analogue as the notion of \emph{critical metrics} in \cite{Fan22}
and characterized these metrics in terms of a certain polytope, called the
Monge--Amp\`ere polytope, which  we recall in \pref{ss: NA balanced metric}.
However, in order to maintain the analogy with the complex case, the arguments
in \cite{Fan22} were carried out over an algebraically closed non-Archimedean
field. 
Since our field $K$ is not algebraically closed, we slightly modify this notion and
introduce non-Archimedean balanced metrics as follows:

\begin{Def*}[=\pref{df: NA balanced}]
   Let $\C_K$ be a completed algebraic closure of $K$, 
   and let $(X,L)$ be a
polarized $K$-variety. Fix $k\in \Z_{>0}$. A NAFS metric $\phi$ on $L^\an$ is
called a \emph{non-Archimedean balanced metric at level $k$} if its pullback
$\phi_{\C_K}$ along the base change morphism $X_{\C_K}\to X$ is a critical
metric on $L_{\C_K}^\an$ at level $k$ in the sense of \cite{Fan22}; see also
\pref{df: NA critical metric}.
\end{Def*}

Then, in analogy with \cite{Don01}, for the polarized Calabi--Yau variety
$(X,L)$ over $K$ considered at the beginning, we expect such non-Archimedean
balanced metrics to approximate the NACY metric.
As a first step toward verifying this expectation, we study the case of
polarized totally degenerate abelian varieties, and obtain the following:

\begin{Theorem}[=\pref{th: theta functions and NA balanced metrics}]
\label{th: A}
Let $(X,L)$  be a polarized totally degenerate abelian variety over $K$.
For sufficiently large $k$, there exists a non-Archimedean balanced metric $\phi_k$ on $L^\an$ at level $k$.
Furthermore, the non-Archimedean balanced metric $\phi_k$ is explicitly given
 by \eqref{eq: NAFS from theta}.
\end{Theorem}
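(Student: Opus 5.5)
The plan is to use the Mumford--Raynaud uniformization $X^\an = T^\an/M$, where $T$ is a split torus of rank $g$ and $M\subset T(K)$ is a lattice. Write $\mathrm{trop}\colon T^\an\to N_\R$ for the tropicalization, so that $\Sk(X)$ is identified with the real torus $N_\R/\Lambda$, with $\Lambda=\mathrm{trop}(M)$. A basis of $H^0(X,kL)$ is then given by the non-Archimedean theta functions $\vartheta_{k,a}$, indexed by $a\in M'/kM'$ where $M'$ is the relevant character lattice. These are Fourier series $\sum_{\lambda} c_{a+\lambda}\chi^{a+\lambda}$ whose coefficients have valuations given by a quadratic form $Q$ coming from the polarization. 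Every $\vartheta_{k,a}$ is defined over $K$, so the base change to $\C_K$ causes no trouble. The candidate metric is the NAFS metric
\[
\phi_k=\frac1k\log\max_{a}|\vartheta_{k,a}|,
\]
which I expect to be \eqref{eq: NAFS from theta}. By construction it is a pure tropical (piecewise-affine) function on $N_\R$, periodic up to the automorphy factor.

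By Fang's criterion, recalled in \pref{ss: NA balanced metric}, $\phi_{\C_K}$ is critical at level $k$ iff, for the norm diagonalized by the basis $(\vartheta_{k,a})$ (after suitable normalization constants), the vector of Monge--Amp\`ere masses
\[
\Bigl(\int \mathbf 1_{\{|\vartheta_{k,a}|e^{-k\phi}=1\}}\,\MA(\phi)\Bigr)_a
\]
is proportional to $(1,\dots,1)$, i.e. it is the barycenter of the Monge--Amp\`ere polytope. Equivalently, each section carries equal Monge--Amp\`ere mass $(L^g)/N_k$, where $N_k=h^0(kL)=k^g\deg$.

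The key steps are as follows.

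\begin{enumerate}
\item Compute $\log|\vartheta_{k,a}|$ on $\Sk(X)$. For $k$ large, the maximum in the Fourier series is attained by a single term off a codimension-one locus. Hence $\log|\vartheta_{k,a}|(x)=\max_{\lambda}\bigl(\langle a+\lambda,x\rangle - Q(a+\lambda)/k\bigr)$, a Legendre-type transform. Here ``$k$ large'' is what guarantees the ultrametric maximum equals the max over monomials, with no cancellation.
\item Show that $\phi_k$ factors through the retraction to $\Sk(X)$. Then $\MA(\phi_k)$ is the real Monge--Amp\`ere measure of a convex piecewise-affine function on $N_\R/\Lambda$, namely a sum of Dirac masses at vertices of the induced polyhedral decomposition, with weights equal to the volumes of the dual cells.
\item Use the translation symmetry. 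The Heisenberg/translation action by the $k$-torsion points of $\Sk(X)$ (points of $\frac1k\Lambda$), which lift to $K$-points of $T$ via $M$, permutes the $\vartheta_{k,a}$ transitively up to constants of equal absolute value, and preserves $\phi_k$. Therefore the masses $\int_{\{\text{$\vartheta_{k,a}$ attains the max}\}}\MA(\phi_k)$ are independent of $a$. Summing over $a$ gives the total mass $(L^g)$, so each mass is $(L^g)/N_k$, which is the balanced condition.
\end{enumerate}

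I expect the main obstacle to be the genericity issue in step 3. Fang's criterion requires the Monge--Amp\`ere measure to be split among the sections in a well-defined way, but Dirac masses may sit on loci where several $\vartheta_{k,a}$ tie. In that case the mass has to be distributed, and the criterion becomes the statement that the equal-mass vector lies in the Monge--Amp\`ere polytope rather than being a literal equality. The first thing to try is to use the symmetry to average: any admissible splitting, averaged over the transitive finite group action, stays in the convex polytope and becomes the uniform vector. This reduces the problem to checking that the polytope is invariant under the induced permutation action, which should follow from functoriality of $\MA$ under automorphisms of $X_{\C_K}$.
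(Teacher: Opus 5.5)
Your overall route is the paper's. A finite group acting on $(X_{\CK},kL_{\CK})$ that permutes the normalized theta basis and fixes the metric leaves $P_{\MA}$ invariant. Averaging a point of $P_{\MA}$ over this group then gives $\mathbf 0\in P_{\MA}$; the paper takes the barycenter instead, which is the same idea. This is Fang's criterion in the form of your last paragraph; your earlier ``mass vector proportional to $\mathbf 1$'' formulation is not the criterion. Your steps~1--2 are not needed here: the paper uses that computation only for Lemma~\ref{lm: discrete Legendre dual} and Theorem~\ref{th: B}.

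The step that fails as written is the choice of group in step~3. The $k$-torsion points of $\Sk(X)$, i.e.\ your $\tfrac1k\Lambda/\Lambda$, form a group of order $k^g$, whereas $N_k=k^g\,h^0(X,L)$. Your index set $M'/kM'$ has the same wrong cardinality. So unless $L$ is principal, this group cannot act transitively on the basis. It has $h^0(X,L)$ orbits, and averaging only produces a vector constant on each orbit, not $\tfrac{(L^g)}{N_k}\mathbf 1$. Already for a Tate curve $\mathbb G_m^{\an}/q^{\Z}$ with $L=\calO(d\cdot O)$, the $kd$ theta functions are cycled by translation by $q^{1/(kd)}$, while translations by powers of $q^{1/k}$ have $d$ orbits.

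In the paper's notation, where $M$ is the character lattice, the correct index set and group are both $\coker k\Phi=M/k\Phi(\Lambda)$. This group acts through $\bar\sigma$ simply transitively by \eqref{eq: permutation at level k}. The underlying translations are by the points $b_{k\Phi}(m,\cdot)\in T(\overline K)$, $m\in M$. Their tropicalizations $\tfrac1k B_\Phi(m,\cdot)$ range over $\tfrac1k$ of an overlattice of $\trop_K(\Gamma_X)$ of index $|\coker\Phi|=h^0(X,L)$. Your $k$-torsion translations give only the subgroup $\Phi(\Lambda)/k\Phi(\Lambda)$, of index $h^0(X,L)$. The rest of the maximal level subgroup translates by points that are not $k$-torsion. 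None of these points need be $K$-rational (contrary to your claim), which is harmless since criticality is tested over $\CK$.

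There is also a normalization issue, and it bears on the ``explicitly given by \eqref{eq: NAFS from theta}'' part, which you leave as an expectation. The sections whose coefficients have valuation exactly $kA_\Phi(\cdot/k)$, as in your step~1, are the $\vartheta^{(k)}_\mu$, and these are permuted on the nose. But they are not defined over $K$: the coefficient of $z^{m_\mu}$ has valuation $kA_\Phi(m_\mu/k)=\tfrac1{2k}B_\Phi(m_\mu,m_\mu)$, generally not an integer. The $K$-rational $\theta^{(k)}_\mu$ are permuted by $\bar\sigma^*$ only up to the factors $a^k_{k\Phi}(m_{\mu+\nu})/a^k_{k\Phi}(m_\mu)$, whose absolute values depend on $\mu$. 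Hence $\tfrac1k\max_\mu\log|\theta^{(k)}_\mu|$ is in general not invariant, and the symmetry argument does not apply to it.

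What descends to $K$ is the metric, not the basis. From $\vartheta^{(k)}_\mu=a^k_{k\Phi}(m_\mu)\theta^{(k)}_\mu$ one gets $(\phi_k)_{\CK}=\varphi_k$, with $\phi_k$ as in \eqref{eq: NAFS from theta}. This $\phi_k$ lies in $\FS_k(L^\an)$ but in general not in $\FS^\str_k(L^\an)$, which is why Definition~\ref{df: NA balanced} tests criticality only after base change to $\CK$. Finally, Corollary~\ref{cr: NA balancing in our situation} as stated assumes $(X,kL)$ Chow stable; for $k\gg0$ this is supplied by Proposition~\ref{pr: asymptotically Chow stable}.
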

\begin{Rem*}
Note that our non-Archimedean balanced metric $\phi_k$ is characterized by \emph{(normalized) theta functions with characteristics} in the sense of 
\pref{df: normalized theta at level k}. 
As for $k$, the only requirement is that $kL$ be very ample.
 Thus it suffices to assume that $k\geq 3$.
\end{Rem*}

\begin{Theorem}[=\pref{th: convergence of NA balanced metrics}]
\label{th: B}
Keep the same notation as above.
 These non-Archimedean balanced metrics $\phi_k$ on $L^\an$ converge  to the NACY metric $\phi^\mathrm{NACY}$ on $L^\an$ in the $\mathscr{C}^0$-topology as $k\to \infty$.
Furthermore, we have
    $$ \sup_{x\in X^\an} \left| \phi^\mathrm{NACY}-\phi_k\right| = O(k^{-2}).$$
\end{Theorem}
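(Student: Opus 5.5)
We reduce Theorem B to an explicit computation on the skeleton $\Sk(X)$, which for a totally degenerate abelian variety is a real torus $\R^n/\Lambda$. Over $\C_K$ we have the Mumford/Raynaud uniformization $X^\an = (\mathbb{G}_m^{n})^\an/M$ with a lattice $M\subset (\C_K^\times)^n$, and the tropicalization map identifies $\Sk(X)$ with $N_\R/\Lambda$, where $\Lambda=\trop(M)$. The polarization $L$ determines a positive definite quadratic form $Q$ on $N_\R$. The NACY metric is then known explicitly, by the Liu / Gubler computation together with uniqueness in \cite{BFJ15}: up to an additive constant, $\phi^\mathrm{NACY}$ pulled back to $N_\R$ is the quadratic function $x\mapsto \tfrac12 Q(x)$, twisted by the cocycle of $L$, and composed with the retraction $X^\an\to\Sk(X)$. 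Its real Monge--Amp\`ere measure is a constant multiple of Lebesgue measure, which is $(L^n)\mu$.

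On the other side, \eqref{eq: NAFS from theta} expresses $\phi_k$ as $\frac1k\max_{\gamma}\bigl(\log|\vartheta_\gamma|\bigr)$ over the normalized theta functions with characteristics $\gamma\in \frac1k\Lambda'/\Lambda$ (where $\Lambda'$ is the dual lattice for $Q$). The tropicalization of each theta series is a maximum of affine functions. Using the standard formula for the absolute values of the coefficients $t^{Q(\lambda+\gamma)/2}$-type, we get
\[
\phi_k(x)=\max_{\lambda\in \frac1k \Lambda'}\Bigl(\langle \lambda, x\rangle-\tfrac12 Q(\lambda)\Bigr)+c_k ,
\]
again composed with the retraction, where $c_k$ comes from the normalization. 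The theorem then reduces to a statement about functions on $N_\R$. The Legendre-type identity $\tfrac12 Q(x)=\sup_{\lambda\in N_\R}\bigl(\langle\lambda,x\rangle-\tfrac12 Q(\lambda)\bigr)$ (with $Q$ also denoting the dual form) shows that $\phi_k$ is this supremum restricted to the discrete set $\frac1k\Lambda'$.

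The key estimate is therefore
\[
0\le \tfrac12 Q(x)-\max_{\lambda\in\frac1k\Lambda'}\Bigl(\langle\lambda,x\rangle-\tfrac12 Q(\lambda)\Bigr)=\min_{\lambda\in\frac1k\Lambda'} \tfrac12 Q(\lambda - x)\le \tfrac{1}{2k^2}\max_{y} \operatorname{dist}_Q(y,\Lambda')^2 ,
\]
obtained by completing the square. The middle term is the squared distance from $x$ to the lattice $\frac1k\Lambda'$, and it is bounded by $k^{-2}$ times the squared covering radius of $\Lambda'$. This bound is uniform in $x$ and periodic, so it descends to $\Sk(X)$. Since both metrics factor through the retraction, it also passes to all of $X^\an$. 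It remains to check that the constants $c_k$ and the normalization of $\phi^\mathrm{NACY}$ agree up to $O(k^{-2})$. Because the NACY metric is only defined up to a constant, it suffices that $c_k$ converges at rate $O(k^{-2})$; one can instead fix constants by requiring, for example, the sup to vanish, which changes nothing. This yields $\mathscr C^0$ convergence together with the rate $O(k^{-2})$.

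The main obstacle is the precise bookkeeping of the theta-function normalization in \eqref{eq: NAFS from theta}. One must verify that the tropical coefficients are exactly $\frac{k}{2}Q(\lambda)$, with no additional linear term from the characteristics or from the cocycle of $L$, since such a term would shift the minimizing lattice. One must also check that the NAFS metric really factors through the retraction to the skeleton. I would first handle the principal-polarization case to confirm the formula, then treat a general type $(d_1,\dots,d_n)$ by the same completing-the-square argument with $\Lambda'$ replaced by the appropriate finer lattice.
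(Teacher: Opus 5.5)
Your strategy is the paper's. $\phi^{\mathrm{NACY}}$ is the toric metric of the Legendre dual $A_\Phi^\vee$ (via Liu's theorem). $\phi_k$ is the toric metric of the discrete Legendre transform of $A_\Phi$ over the set of all rescaled exponents $\frac{1}{k}M=\bigcup_{\mu}\bigl(\frac{m_\mu}{k}+\Img\Phi\bigr)$; this handles every polarization type at once, so no separate principal case is needed. Completing the square and using compactness of the torus then gives the $O(k^{-2})$ bound.

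Your normalization worry is settled exactly, without renormalizing. \pref{pr: scale relation} gives $-\log|a_{k\Phi}^k(m)|_{\C_K}=kA_\Phi(m/k)$ for every $m\in M$. So at the Gauss point over $-n$, the monomial of $\vartheta^{(k)}_\mu$ with exponent $m$ contributes exactly $\langle m/k,n\rangle-A_\Phi(m/k)$ after dividing by $k$. There is no $c_k$ and no linear term. Your fallback of shifting $\phi_k$ or $\phi^{\mathrm{NACY}}$ by $k$-dependent constants would change the statement being proved, so it is good that it is unnecessary.

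\textbf{The gap.} The step you defer is genuinely missing: that $\phi_k$ factors through $\rho_X$. Concretely, $\frac{1}{k}\max_\mu\log|\vartheta^{(k)}_\mu(x)|$ must equal the discrete Legendre transform at $n$ for \emph{every} $x\in\trop_K^{-1}(-n)$, not just at the Gauss point. This is the paper's key lemma (\pref{lm: discrete Legendre dual}). It does not follow from ``each theta series tropicalizes to a maximum of affine functions''. Off $\Sk(X)$ the ultrametric inequality gives only $\leq$. Worse, each single $\vartheta^{(k)}_\mu$ is a nonzero section of an ample bundle, so it vanishes somewhere on $X_{\C_K}^\an$, where $\log|\vartheta^{(k)}_\mu|=-\infty$ even though its tropicalization is finite. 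Without the reverse inequality you only get $\phi_k\leq\phi^{\mathrm{NACY}}$ off the skeleton, whereas the theorem claims a two-sided bound on all of $X^\an$.

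\textbf{The missing idea.} Only the characteristic realizing the global maximum needs to be free of cancellation, and it is:
\begin{itemize}
\item Let $m_k$ maximize the strictly concave function $f_n(m)=\langle m,n\rangle-A_\Phi(m)$ over all of $\frac{1}{k}M$, and let $\frac{m_{\mu^*}}{k}+\Img\Phi$ be the coset containing it.
\item Suppose another $m_k^*$ in that coset had $f_n(m_k^*)=f_n(m_k)$. Then $m_k-m_k^*\in\Img\Phi$, so $m_k^*+\frac{1}{k}(m_k-m_k^*)\in\frac{1}{k}M$, and this point lies strictly between them when $k\geq 2$.
\item Strict concavity gives that point a larger value of $f_n$, contradicting the choice of $m_k$.
\item Hence $\vartheta^{(k)}_{\mu^*}$ has a unique dominant term at $x$, so $\frac{1}{k}\log|\vartheta^{(k)}_{\mu^*}(x)|=f_n(m_k)$. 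Combined with the upper bound for all $\mu$, this gives equality at every point.
\end{itemize}
This is exactly where $k\geq 2$ is used: for $k=1$ and a principal polarization, $\log|\theta|$ is genuinely not toric. Once this lemma is supplied, your completing-the-square estimate finishes the proof as written.
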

\begin{Rem*}
It is well known that $\phi^\mathrm{NACY}$ is the toric metric associated with
the \emph{Legendre dual} of a positive definite quadratic form $A_\Phi$.
    As a key lemma, we show that our 
    $\phi_k$ is the toric metric associated with what we call
the \emph{discrete Legendre dual} of $A_\Phi$ at level $k$; see \pref{lm: discrete Legendre dual}.
\end{Rem*}
As an application, we consider a
totally degenerating family $(\calX,\calL)$ of polarized abelian varieties over the punctured disc $\Delta^*:=\{t\in \C \ |\ 0<|t|<e^{-1}\}$. Then, as in \cite{BJ17}, the \emph{hybrid analytic space $\calX^\hyb$} associated to $\calX$ admits a fibration
$$
    \pi^\hyb:\calX^\mathrm{hyb}\to \Delta=\Delta^*\cup \{ 0\}
$$
such that there are canonical isomorphisms $h:\calX^\mathrm{hyb}|_{\Delta^*}\simeq \calX|_{\Delta^*}$ and $\calX_0^\hyb:=(\pi^\hyb)^{-1}(0)\simeq \calX_K^\an$. In particular, for any $t\in \Delta^*$, 
an induced isomorphism $h_t:\calX^\hyb_t:=(\pi^\hyb)^{-1}(t)\simeq \calX_t$
can be regarded as a rescaling by $$c(t):=\frac{-1}{\log |t|},$$ and
 $\calL_t^\hyb$ can be regarded as $c(t)\calL_t$ on $\calX_t$. 
 In addition, we have a canonical isomorphism $\calL^\hyb_0\simeq \calL_K^\an$.
As we will briefly recall in \pref{ss: hybrid spaces}, 
the framework of non-Archimedean pluripotential theory can be extended to 
more general Berkovich analytic spaces, including hybrid analytic spaces;
see \cite{Fav20} and \cite{PS23} for details.
In particular, there is a notion of \emph{hybrid cpsh metrics} on $\calL^\hyb$, compatible both with cpsh metrics on the complex fibers $\calL=\{\calL_t\}_{t\in \Delta^*}$ and with cpsh metrics on the non-Archimedean fiber $\calL_K^\an$.
More precisely, for any $t\in \Delta$ and any hybrid cpsh metric $\phi\in \CPSH(\calL^\hyb)$, denote by $\phi(t)$ its restriction to $\calX^\hyb_t$. Then the  hybrid cpsh metric $\phi\in \CPSH(\calL^\hyb)$ satisfies
    \begin{equation}
        \phi (t) \in 
  \begin{cases*}
   \CPSH(\calL_t^\hyb)=\CPSH(c(t)\calL_t) & if $t\neq 0$, \\
   \CPSH(\calL_0^\hyb)=\CPSH(\calL_K^\an)     & if $t=0$.
  \end{cases*} 
    \end{equation} 
To keep the notation compatible with the scaling on the complex side,
for $t\in \Delta^*$,
we sometimes denote a cpsh metric $c(t)^{-1}\phi(t)$ on $\calL_t$ by $\phi_t$.
 Then we can apply \pref{th: A} and \pref{th: B} to  $(X,L):=(\calX_K,\calL_K)$.
Combining the explicit description of complex balanced metrics on  $\calL$ in \cite{WZ17} with the corresponding non-Archimedean description given in \pref{th: A}, we obtain the following by a direct computation:

\begin{Theorem}[=\pref{th: continuity of hybrid balanced metric}]
\label{th: C}
For sufficiently large $k$, after possibly making a finite base change,
    there exists a hybrid cpsh metric $$\phi_k^\hyb\in \CPSH (\calL^\hyb)$$ such that,
    for each $t\in \Delta^*$, the restriction
    $\phi_{k,t}^\hyb:=c(t)^{-1}\phi_k^\hyb(t)$ is the complex balanced metric on $\calL_t$ at level $k$  and, for $t=0$, the restriction $\phi_k^\hyb(0)$ is the NA balanced metric on $L^\an$ at level $k$.
\end{Theorem}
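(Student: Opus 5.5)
The plan is to write both sides of the family in terms of one set of theta-function data, and then check that the hybrid Fubini--Study construction built from that data restricts correctly on every fiber.

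\emph{Step 1: uniformization.} After a finite base change, I would put the totally degenerating family $(\calX,\calL)$ over $\Delta^*$ into Mumford/Tate form. Concretely, $\calX_t\simeq (\C^*)^g/\Lambda_t$ with period matrix $\Lambda_t=t^{B}\cdot u(t)$. Here $B$ is a positive definite integral matrix determined by the polarization, hence by $A_\Phi$, and $u$ is holomorphic and invertible near $0$. The base change is there to make the characteristics and the degeneration data rational, and to make a symmetric theta structure defined over $\C((t))$. With this choice, the normalized theta functions with characteristics $\{\vartheta_{k,\gamma}(t,\cdot)\}_{\gamma\in (\Z^g/kB\Z^g)}$ of \pref{df: normalized theta at level k} become a basis of $H^0(\calX_t,k\calL_t)$. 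This basis depends holomorphically (in fact meromorphically at $0$, so over $\calO(\Delta^*)$ with controlled growth) on $t$. Its image in $H^0(\calX_K,k\calL_K)$ is exactly the basis used in \eqref{eq: NAFS from theta}.

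\emph{Step 2: definition of $\phi^\hyb_k$.} I would define the metric as
\[
\phi_k^\hyb:=\frac1k\log\max_\gamma\bigl|\lambda_{k,\gamma}\,\vartheta_{k,\gamma}\bigr|^\hyb ,
\]
where $\lambda_{k,\gamma}$ are the constants from \pref{th: A}. A max of hybrid absolute values of global sections of $k\calL$ over $\calO(\Delta)[t^{-1}]$ is a hybrid Fubini--Study-type metric in the sense of \cite{Fav20,PS23}, and such metrics are hybrid cpsh. There is a mismatch here: on the complex fibers the balanced metric of \cite{WZ17} is the $L^2$-type metric $\frac1k\log\sum_\gamma|\vartheta_{k,\gamma}|^2$ rescaled by $c(t)$. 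So I would instead take
\[
\phi^\hyb_k(t)=\frac{c(t)}{2k}\log\sum_\gamma|\lambda_\gamma(t)\vartheta_{k,\gamma}|^2 \quad \text{for } t\neq0,
\]
and $\phi^\hyb_k(0)$ equal to the NA max formula. Then I would verify that this glued function is continuous on $\calX^\hyb$. By the defining property of the hybrid topology, $c(t)\log|s|$ converges to $\log|s|_K$. Also, $c(t)\log\sum$ and $c(t)\log\max$ differ by at most $c(t)\log N\to0$. So continuity reduces to checking that the complex normalizing constants $\lambda_\gamma(t)$, which by \cite{WZ17} are explicit (essentially $L^2$-norms of theta functions, Gaussian in $\log|t|$), satisfy $c(t)\log|\lambda_\gamma(t)|\to\log\lambda_{k,\gamma}$.

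\emph{Step 3: positivity.} Positivity on the complex fibers follows from Fubini--Study, and at $t=0$ it follows from \pref{th: A}. Hybrid cpsh is characterized as a uniform limit of hybrid FS metrics. Since $\frac{c}{2k}\log\sum|s_\gamma|^2$ is itself of FS form over $\calO(\Delta)$ after adding the constants (hybrid FS metrics allow both max and sums), this should be immediate.

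\emph{Main obstacle.} The hard part is the direct computation of the asymptotics of the complex balanced constants $\lambda_\gamma(t)$ from \cite{WZ17}. One needs the $L^2$ norms of $\vartheta_{k,\gamma}$ with respect to the balanced metric itself. I would first try to use translation symmetry by $k$-torsion points: it permutes the basis and forces all norms to be equal up to the explicit Gaussian factor $|t|^{\frac{1}{2k}\gamma^TB^{-1}\gamma}$-type coming from the characteristic. Taking $c(t)\log$ of these factors gives exactly the discrete Legendre dual weights of \pref{lm: discrete Legendre dual}, which identifies the limit with the NA balanced metric.
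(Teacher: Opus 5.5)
Your overall architecture matches the paper's. After a finite base change you lift the theta basis to the family, invoke \cite{WZ17} on the fibers over $\Delta^*$ and \pref{th: A} at $t=0$, and glue by a hybrid $L^2$ (Bergman) metric. Continuity then comes from the fact that $\frac{c(t)}{2k}\log\sum$ and $\frac{c(t)}{k}\log\max$ differ by at most $\frac{c(t)}{2k}\log N_k$.

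However, the step you single out as the main obstacle is set up incorrectly, and as written it fails, because you mix up the two theta bases.

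\textbf{The $t=0$ side.} The weights $\log|a^k_{k\Phi}(m_\mu)|_{\CK}$ in \eqref{eq: NAFS from theta} accompany the \emph{unnormalized} $\theta^{(k)}_\mu\in H^0(X,kL)$; that basis is the one defined over $K$. The normalized functions $\vartheta^{(k)}_\mu=a^k_{k\Phi}(m_\mu)\theta^{(k)}_\mu$ already absorb these weights, and they are only defined over a finite extension. In terms of the $\vartheta^{(k)}_\mu$, the NA balanced metric is $\varphi_k=\frac1k\max_\mu\log|\vartheta^{(k)}_\mu|$ with \emph{all weights zero}; see \eqref{eq: NA balanced metric at level k}. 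Your $t=0$ formula $\frac1k\log\max_\gamma|\lambda_{k,\gamma}\vartheta_{k,\gamma}|$ therefore counts the normalization twice. It is not $\phi_k$: it depends on the auxiliary lifts $m_\mu$, whereas $\vartheta^{(k)}_\mu$ does not. Moreover, the $\coker k\Phi$-symmetry \eqref{eq: permutation at level k} behind \pref{th: A} no longer applies to it.

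\textbf{The complex side.} The same problem appears here. The $\coker k\Phi$-action permutes the $\vartheta^{(k)}_\mu$ exactly on every fiber. \cite[Theorem~2.1]{WZ17} says precisely that $\frac{1}{2k}\log\sum_\mu|\vartheta^{(k)}_\mu(t)|^2$ is the balanced metric, up to an irrelevant overall constant. So your $\lambda_\gamma(t)$ do not depend on $\gamma$, and no characteristic-dependent Gaussian factor appears. The limit $c(t)\log|\lambda_\gamma(t)|\to\log\lambda_{k,\gamma}$ that you propose to verify is false for the basis you chose.

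\textbf{The fix.} Once the bookkeeping is corrected, the obstacle disappears and no $L^2$-norm asymptotics are needed. The Gaussian factor you were looking for is $a^k_{k\Phi}(m_\mu)$ itself, and it sits inside $\vartheta^{(k)}_\mu$ on both sides.
\begin{itemize}
\item The finite base change is chosen exactly so that $a^k_{k\Phi}(m_\mu)\in K'$ for all $\mu$.
\item Together with the uniformization of \cite[Lemma~2.6]{GO24}, which lets $a$ and $b$ take values in $\scrA^\times$, this makes the $\vartheta^{(k)}_\mu$ honest sections of $k\calL$ over the family.
\item Then $c(t)\log|a^k_{k\Phi}(m_\mu)(t)|\to\log|a^k_{k\Phi}(m_\mu)|$ is the same hybrid-topology fact you already use for $c(t)\log|s|$.
\end{itemize}
This also explains why $t$-dependent weights $\lambda_\gamma(t)$ would be a problem for cpsh-ness. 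Such a weight is harmless only if it is the absolute value of an element of the hybrid coefficient ring, and that is exactly what the base change achieves for $a^k_{k\Phi}(m_\mu)$.

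Hence one simply takes $\phi^\hyb_k:=(\phi_k)^\hyb_2$, the hybrid Bergman metric of \pref{eg: extendability of NAFS} built from $s_\mu=\vartheta^{(k)}_\mu$ with all $\lambda_\mu=0$. One smaller point: in the paper's definitions, $\FS^\tau(\calL^\hyb)$ consists of max-type metrics. So the cpsh property of this $L^2$-type metric is not immediate from the definition; it comes from the argument of \cite[Lemma~3.6]{PS23}, as in that example.
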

\begin{Rem*}
    Note that the above finite base change is used only to ensure the existence of a family of balanced metrics. In particular, since our non-Archimedean balanced metric is defined by using a NAFS metric over $\CK$, it is independent of this base change.
\end{Rem*}

Furthermore, exploiting the topological properties of $\calX^\hyb$, \pref{th: B} implies the following uniform estimate:

\begin{Theorem}[=a consequence of \pref{th: approximation of CY by L^p-metrics}]
\label{th: D}
For sufficiently large $k>0$, after possibly
making a finite base change and 
shrinking the radius of $\Delta$,
     we obtain the uniform estimate
    $$ \sup_{\calX_t} c(t)|\phi_t^\mathrm{CY} - \phi_{k,t}^\hyb|= O(k^{-2})$$
    for all 
    $t\in \Delta^*$, where $\phi^\mathrm{CY}_t$ is the K\"ahler potential associated with the Calabi--Yau metric on $\calL_t$.
\end{Theorem}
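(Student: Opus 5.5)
We argue by the triangle inequality on each fiber, splitting into the purely complex error and the error carried over from the non-Archimedean fiber:
$$
c(t)\bigl|\phi^\mathrm{CY}_t-\phi^\hyb_{k,t}\bigr|\le \bigl|c(t)\phi^\mathrm{CY}_t-\phi^\mathrm{CY,hyb}(t)\bigr| +\bigl|\phi^\mathrm{CY,hyb}(t)-\phi_k^\hyb(t)\bigr|.
$$
Here $\phi^\mathrm{CY,hyb}$ is a hybrid metric on $\calL^\hyb$ that restricts to $c(t)\phi^\mathrm{CY}_t$ on $\calX^\hyb_t$ for $t\neq0$ and to $\phi^\mathrm{NACY}$ at $t=0$.

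The first step is to realize the Calabi--Yau potentials as a \emph{continuous} hybrid metric. For a totally degenerating family of abelian varieties, $\phi^\mathrm{CY}_t$ can be written explicitly via Riemann theta functions, or equivalently via the flat metric on the torus $\calX_t\simeq(\C^*)^g/\Lambda_t$. After rescaling by $c(t)$ and pulling back to the tropical coordinates $\mathrm{Log}_t=c(t)\log|z|$, it becomes a periodic function on $\R^g$.

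I would show that this function converges uniformly, as $t\to0$, to the Legendre dual of $A_\Phi$, which by the remark after \pref{th: B} is exactly the toric description of $\phi^\mathrm{NACY}$. Concretely, $c(t)\log\sum_{\lambda}|t|^{A_\Phi(\lambda)/2}\cdots$ is a log-sum-exp that degenerates to a maximum at rate $O(c(t))$. Via the topology of $\calX^\hyb$ (as recalled in \pref{ss: hybrid spaces}), this yields a continuous $\phi^\mathrm{CY,hyb}$ with
$$
\sup_{\calX_t}\bigl|c(t)\phi_t^\mathrm{CY}-\phi^\mathrm{CY,hyb}(t)\bigr|=0 .
$$
This holds by construction, once the normalization constants are chosen to match.

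The second term is handled by continuity of $\phi^\mathrm{CY,hyb}-\phi_k^\hyb$ on the hybrid space, where $\phi_k^\hyb$ is the hybrid balanced metric from \pref{th: C}. At $t=0$, \pref{th: B} gives
$$
\sup_{X^\an}|\phi^\mathrm{NACY}-\phi_k|\le Ck^{-2}.
$$
The key claim is that
$$
t\mapsto \sup_{\calX^\hyb_t}\bigl|\phi^\mathrm{CY,hyb}(t)-\phi_k^\hyb(t)\bigr|
$$
is continuous at $t=0$. This follows from properness of $\pi^\hyb$ together with continuity of the function on $\calX^\hyb$: a continuous function on a space proper over $\Delta$ has fiberwise sup that is upper semicontinuous, which suffices for our purposes.

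Hence, after shrinking $\Delta$ (with the radius depending on $k$), the second term is at most $2Ck^{-2}$. Together with the first step this gives the claimed $O(k^{-2})$ bound.

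The main obstacle is that continuity of $\phi_k^\hyb$ itself on $\calX^\hyb$ is needed, not just the fact that it restricts correctly fiberwise. I would extract it from the explicit formula: the complex balanced metrics of \cite{WZ17} are $\frac1k\log\sum_j|\vartheta_j|^2$ with normalized theta functions. Their hybrid rescalings are again log-sum-exps over lattice points, degenerating uniformly to the maximum over the same index set that defines \eqref{eq: NAFS from theta}.

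A secondary concern is that the shrinking radius depends on $k$. The statement is for each sufficiently large $k$ after shrinking, so this is allowed. Alternatively, the same uniform log-sum-exp estimate, with error $O(c(t)\log N_k)$, makes the dependence explicit.
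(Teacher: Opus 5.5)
The architecture of your proof is the paper's, but the mechanism you propose for the first step would give the wrong limit.

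\textbf{Where you agree with the paper.} Once $\phi^{\mathrm{CY},\hyb}$ is known to be a continuous metric on $\calL^\hyb$ with central fiber $\phi^{\mathrm{NACY}}$, the first term of your triangle inequality vanishes identically. What remains is the argument of \pref{pr: uniform estimate}: apply \pref{lm: usc for sup} to $|\phi^{\mathrm{CY},\hyb}-\phi_k^\hyb|$, use \pref{th: convergence of NA balanced metrics} at $t=0$, and take a $k$-dependent radius. You get, and only need, upper semicontinuity of the fiberwise supremum, not continuity. So the real content of your first step is the continuity of the hybrid extension of the Calabi--Yau potentials. The paper does not prove this; it imports it from \cite[Theorem~1.2]{Li25}.

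Your ``main obstacle'' is already part of the conclusion $\phi_k^\hyb\in\CPSH(\calL^\hyb)$ of \pref{th: C}. Your log-sum-exp argument for it is sound and is essentially the argument of \cite[Lemma~3.6]{PS23} behind \pref{eg: extendability of NAFS}. Once the normalized theta functions are lifted to $H^0(\calX,k\calL)$ (via \cite[Lemma~2.6]{GO24} and a finite base change), $\phi_k^\hyb$ differs on each fiber by at most $\frac{c(t)}{2k}\log N_k$ from the hybrid tropical Fubini--Study metric $\frac1k\max_\mu\log|\vartheta_\mu^{(k)}|$. That metric is continuous on $\calX^\hyb$.

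\textbf{The step that fails as written.} A log-sum-exp of monomials over lattice points $\lambda$ tropicalizes to a maximum of affine functions. The result is a piecewise-affine discrete Legendre transform; this is exactly what happens to the theta functions in \pref{lm: discrete Legendre dual}. But $A_\Phi^\vee$ is a positive definite quadratic form, so your recipe cannot produce it.

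The correct direct computation involves no maximum:
\begin{enumerate}
\item In the trivialization of the pullback of $\calL_t$ to $T_\C$, the flat potential is an exact quadratic function of $\log|z|$. It is governed by the inverse of the real period matrix $\bigl(\log|q(\lambda,\lambda')(t)|\bigr)$, with $\lambda,\lambda'$ running over a basis of $\Lambda$.
\item That matrix equals $-c(t)^{-1}$ times the Gram matrix of $B_\Phi$ on $\Img\Phi$, up to $O(1)$.
\item Hence the rescaled potential is $A_\Phi^\vee+O(c(t))$ on a fundamental domain.
\end{enumerate}

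Two further corrections. First, this function obeys the cocycle rule \eqref{eq: tropical cocycle condition}; it is not periodic. Second, turning fiberwise convergence in tropical coordinates into continuity on $\calX^\hyb$ requires continuity of the tropicalization on the hybrid uniformization of \cite{GO24}. The generalities of \pref{ss: hybrid spaces} do not supply this. Citing \cite[Theorem~1.2]{Li25}, as the paper does, settles the whole first step.

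\textbf{Your closing remark.} It does not make the radius explicit. The bound $O(c(t)\log N_k)$ only compares $\phi_k^\hyb$ with the tropical Fubini--Study metric. \pref{lm: usc for sup} gives no rate at which the other metrics approach their central fibers. The paper in fact leaves open whether the radii can be chosen independently of $k$.
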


\begin{Rem*}
The finite base change above is required only when applying \pref{th: C}.
In particular, the proof of \pref{th: D} does not require any base change. Note, however, that the radius of the shrunken disc may depend on $k$, and it remains unclear whether the disc can be chosen independently of $k$. Nevertheless, since estimates of this kind also appear in \cite[(6)]{Li25}, such an estimate should still be meaningful.

\end{Rem*}

Thus, although we have verified the expectation only in a rather special case, the analogy with the complex theory in \cite{Don01} further suggests that 
the study of non-Archimedean balanced metrics may lead to a broader theory of canonical metrics on non-Archimedean spaces, not limited to the NACY metrics.

\subsection*{Structure of the paper}
The paper is organized as follows:
\begin{itemize}
    \item  
    \pref{sc:Preliminaries} is devoted to preliminaries. 
    Thus, most of the material there is not mathematically new. 
    However, for later use, we introduce several nonstandard classes of 
    non-Archimedean Fubini--Study metrics, 
    such as $\FS_k(L^\an)$ and $\FS_k^\str(L^\an)$. 
    We also introduce non-Archimedean balanced metrics, 
    which are the main objects studied in this paper.

    \item \pref{sc:NA balanced on totally degenerate abelian varieties} forms the technical heart of this paper. There, we study polarized totally degenerate abelian varieties explicitly in terms of degeneration data in the sense of \cite{FC}, and prove \pref{th: A} and \pref{th: B}.

    \item In \pref{sc:Applications via hybrid geometry}, 
    we briefly recall some basic notions of hybrid geometry and
    consider applications of the non-Archimedean results obtained in \pref{sc:NA balanced on totally degenerate abelian varieties}. In particular, we prove \pref{th: C} and \pref{th: D} using \pref{lm: usc for sup}, a lemma based on a purely topological argument.
\end{itemize}

\subsection*{Notation and conventions}
We now collect some notation and conventions that will be used throughout the paper.
    \begin{enumerate}[label=(\roman*)]
     \item For any scheme $X$ over a ring $A$,
     the \emph{structure sheaf on $X$} is denoted by  $\calO_X$.
     A \emph{line bundle $L$ on $X$} means an invertible sheaf on $X$.
     Here, we use additive notation for line bundles; for instance, $kL$ denotes $L^{\otimes k}$ for $k\in \Z_{>0}$.
     In addition, for any ring homomorphism $A\to B$, we write $X_B:=X\times_{\Spec A}\Spec B$ for the \emph{base change of $X$ to $B$}. Similarly, for any line bundle $L$ on $X$, we write $L_B$ for the \emph{base change of $L$ to $X_B$}.
      \item A \emph{polarized variety $(X,L)$} over a field $F$ is a pair 
     consisting of an $F$-variety $X$ and an ample line bundle $L$ on $X$. 
    We denote by $(L^d)\in \Z_{>0}$ the \emph{top self-intersection number of $L$}, where $d:=\dim X$.
    
    \item 
     A \emph{non-Archimedean field} is a field equipped with a non-Archimedean valuation with respect to which it is complete. We use multiplicative notation for all valuations in this paper. For a non-Archimedean field $F$, we denote its valuation by $|\cdot|_F$. We denote by $F^\circ$ the \emph{valuation ring} of $F$, namely,
     $$F^\circ:=\{a\in F \ | \ |a|_F\leq 1\}.$$ 
     The \emph{residue field} of $F$ is denoted by $\widetilde{F}$, namely,
     $$\widetilde{F}:=F^\circ/F^{\circ\circ},$$ 
     where $F^{\circ\circ}:=\{a\in F \ |\ |a|_F<1\}$ is the unique maximal ideal of $F^\circ$. 
     Finally, the \emph{value group} is denoted by $|F^\times|$, namely,
     $$|F^\times|:=\{|a|_F\in \R_{>0} \ |\ a\in F^\times\}\subset \R_{>0}.$$
     \item Let $K:=\C((t))$ be the \emph{field of formal Laurent series with a $t$-adic valuation $|\cdot|_K$} such that $|t|_K:=e^{-1}$.

     \item For any scheme $X$ locally of finite type over a non-Archimedean field $F$, let $X^\an$ denote the \emph{$F$-analytic space associated to $X$}
      in the sense of Berkovich; see \cite{Ber90}. For any line bundle $L$ on $X$, we denote by $L^\an$ 
     the \emph{induced line bundle on $X^\an$}. For each $x\in X^\an$, the \emph{completed residue field} is denoted by $\scrH(x)$.
     For a complete field extension $F'/F$, we denote by $X_{F'}^\an$ the \emph{$F'$-analytic space associated to $X_{F'}$}. Similarly, we denote by $L_{F'}^\an$ the \emph{induced line bundle on $X_{F'}^\an$}. On the other hand, for any scheme $X$ locally of finite type over $\C$, we identify $X$ with its associated complex analytic space $X(\C)$, and a line bundle $L$ on $X$ with its associated line bundle $L(\C)$ on $X(\C)$. Furthermore, if $X$ is smooth over $\C$, then we often identify a Kähler potential $\phi$ with its associated closed $(1,1)$-form
     $\omega_\phi$ on $X(\C)$.

     \item 
     For $n\in \Z_{>0}$, let $\GL_n$ (resp. $\SL_n$) denote 
     the \emph{general linear group scheme} (resp. the \emph{special linear group scheme}) over $\Z$. 
     In particular, for any ring $A$, we denote by $\GL_n(A)$ (resp. $\SL_n(A)$) 
     the \emph{group of $A$-valued points of $\GL_n$} (resp. $\SL_n$). We also write $U(n)\subset \GL_n(\C)$ for the \emph{unitary group}.

    \end{enumerate}

\subsection*{Acknowledgements}
The idea for this paper originated when the author read a paper by Yanbo Fang  \cite{Fan22}. The author is grateful to him for fruitful discussions in Lille about the initial ideas for this paper, on the occasion of the workshop 
\emph{Non-Archimedean Geometry}, held from October 6 to 10, 2025.
The author also thanks 
Paul Alexander Helminck, Itsuki Tazoe, Yuki Tsutsui, and Yuto Yamamoto
for various valuable comments on this work.
In particular,
the author would like to express sincere gratitude to Yuji Odaka both for his interest in this work and for encouraging the author to develop it into a paper. 
In addition, the author would like to thank
Kazushi Ueda for his continued support during the period when the author was affiliated with the University of Tokyo.
This work was supported by JSPS KAKENHI Grant Number JP25KJ0084 and RIKEN iTHEMS Program.
\section{Preliminaries}
\label{sc:Preliminaries}
\subsection{A brief review of non-Archimedean pluripotential theory}
\label{ss: NA pluripotential theory}
Berkovich geometry was introduced by Berkovich in \cite{Ber90} as a framework for the analytic study of algebraic varieties over non-Archimedean fields. It  has attracted significant attention in recent years because of its many applications. 
In particular, notions analogous to metrics and currents in Kähler geometry were introduced by Chambert-Loir and Ducros in \cite{CLD12}, and later developed into a systematic theory, now known as \emph{non-Archimedean pluripotential theory}, through the contributions of many mathematicians, especially Boucksom, Eriksson, Favre, and Jonsson.
In this subsection, we review some notions and results concerning this theory that will be needed later in the paper. A large part of this subsection is based on \cite{BE21}.

\begin{Def}[cf. {\cite[\S~2.5]{BGR}} and {\cite[Definition~1.10]{BE21}}]
Let $F:=(F,\va_F)$ be a non-Archimedean field, and let $V$ be an 
    $F$-vector space.
    An \emph{$F$-norm $\nm$ on $V$} is a function $\nm: V\to \R_{\geq 0}$ such that
    \begin{itemize}
        \item $||av||=|a|_F||v||$ for all $a\in F, v\in V$,
        \item $||v+w||\leq \max\{ ||v||,||w||\}$ for all $v,w\in V$.
    \end{itemize}
    When there is no risk of confusion, we simply refer to the $F$-norm $\nm$ as a \emph{norm} on $V$.
    Denote by $\calN(V)$ the set of norms on $V$.
    A norm $\nm$ on $V$ is called \emph{diagonalizable} if there exists a basis $\{e_i\}$
    of $V$
    such that 
    $$||v||=\max_i\{|a_i|_F\cdot ||e_i||\}$$
    for any $v=\sum a_i e_i \in V,$ where $a_i\in F$. 
    Such a basis $\{e_i\}$ is called an \emph{orthogonal basis} for the norm $\nm$.
    Denote by $\calN^\diag (V)\subset \calN(V)$ 
    the set of diagonalizable norms on $V$. A diagonalizable norm on $V$ is called \emph{strictly Cartesian} if, moreover, the above orthogonal basis  $\{e_i\}$ satisfies $||e_i||=1$ for all $i$.
    Similarly, such a basis $\{e_i\}$ is called an \emph{orthonormal basis} for the norm $\nm$.
    Denote by $\calN^\str (V)\subset \calN^\diag (V)$ the set of strictly Cartesian norms on $V$.
\end{Def}

\begin{Def}\label{df: continuous metric on nA spaces}

Let $F:=(F,\va_F)$ be a non-Archimedean field, and
consider a variety $X$ over $F$ with a line bundle $L$.
A \emph{(non-Archimedean) continuous metric $\phi$} on $L^\an$ is a family of norms
$$ \nm_{\phi (x)}: L(x):=L\otimes \scrH(x)\to \R_{\geq 0},\ \mathrm{for \ each\ } x\in X^\an,$$
where 
$\nm_{\phi(x)}$ is an $\scrH(x)$-norm on $L(x)$,  such that for any local trivializing section $s$ of $L$ on an open subset $U\subset X$, the induced function 
$$\phi_s:=-\log ||s||_\phi :U^\an\to \R $$
is continuous.
More explicitly, $\phi_s(x)=-\log ||s(x)||_{\phi(x)}$ for any $x\in U^\an$,
where $s(x)\in L(x)$ denotes the value of $s$ at $x$. 
\end{Def}
As we use additive notation for line bundles and metrics, if $\phi, \psi$ are continuous metrics on line bundles $L,M$, then
we
denote by $\phi\pm\psi$ the induced continuous metric on $L\pm M$.
Furthermore, when $L=\calO_X$, a continuous metric $\phi$ is identified with a continuous function $\phi_1=-\log ||1||_\phi$ on $X^\an$.
In particular, if $X$ is proper, then for any two continuous metrics $\phi$ and $\psi$ on $L^\an$,
the \emph{uniform norm} 
\begin{equation}\label{eq: uniform norm}
\sup_{x\in X^\an} |\phi-\psi|    :=   \sup_{ X^\an} \left|-\log||1||_{\phi-\psi}\right|
\end{equation}
 is well-defined.

\begin{example}
    \label{eg: log|s|}
    
For any section $s\in H^0(X,L)$, 
denote the non-vanishing locus of the section $s$ by
$X_s:=\{x\in X \ |\ s(x)\neq 0\}$.
Then
$s$
induces a continuous metric
$\log |s|$ on $L|_{X_s}$ 
defined by
$$\log|s|_{s'}(x):=-\log |(s'/s)(x)|=\log |(s/s')(x)|$$ for any local trivializing section $s'$ of $L$ on
an open $U\subset X_s$ and any $x\in U^\an$, where
$|(s'/s)(x)|$ denotes the value at 
$x\in U^\an$ of the regular function
$s'/s\in \calO_X(U)$.
When $L=\calO_X$, $\log|s|$ is literally identified with a continuous function  of the form $\log|s(\cdot)|$
as $\log|s(\cdot)|=-\log|(1/s)(\cdot)|=\log|s|_1$.

Note that we may define $\log |s|$ even on the vanishing locus by allowing the induced function to take the value $-\infty$. Such a metric is called \emph{singular}, although all the metrics considered in this paper are continuous.
If we have a finite set of  sections $\{s_i\}\subset H^0(X,L)$ with no common zeros, then we obtain 
a continuous metric $\max_i \log|s_i|$ on $L^\an$ 
 given by 
$$ \left(\max_i \log|s_i| \right)_s(x):=\max\{ -\log |(s/s_i)(x)|\} $$
for any local trivializing section $s$ of $L$ on an open $U\subset X$.
Indeed, although
some of these functions
$-\log |(s/s_i)(x)|$ may take the value $-\infty$, at least one of them takes a value in $\R$.
Furthermore, given constants $\lambda_i\in \R$ for each $i$,
we may also define a continuous metric $\max_i \{ \log|s_i|+\lambda_i\}$ on $L^\an$ 
 in a similar manner.
\end{example}

In what follows, we mainly consider a geometrically connected smooth projective variety $X$ over $F$ with an ample line bundle $L$.
Since $L$ is ample, $kL$ is globally generated for sufficiently large $k\in \Z_{>0}$.
Hence, $V_k:=H^0(X,kL)$ admits a finite set of sections  $\{s_i\}\subset H^0(X,kL)$ with no common zeros.
As observed in \pref{eg: log|s|}, given constants $\lambda_i\in \R$ for each $i$,
we have a continuous metric on $kL^\an$ of the form
$\max_i \{ \log|s_i|+\lambda_i\}$.
Since we have $s^k/s_i\in \calO_X(U)$ for any local trivializing section $s$ of $L$ on an open $U\subset X_{s_i}$, we obtain  a continuous metric $\frac{1}{k}\max_i \{ \log|s_i|+\lambda_i\}$ on $L^\an$
defined by 
\begin{equation}\label{eq: NAFS}
    \left(\frac{1}{k} \max_i\left\{\log|s_i| +\lambda_i\right\}\right)_s(x):=\frac{1}{k} \max_i\left\{-\log|(s^k/s_i)(x)| +\lambda_i\right\}
\end{equation}  
for any local trivializing section $s$ of $L$ on an open $U\subset X$.
\begin{Def} \label{df: NAFS and cpsh}
    A continuous metric on $L^\an$ of the form given in \eqref{eq: NAFS} is called a \emph{non-Archimedean Fubini--Study metric} (a \emph{NAFS metric} for short). Denote by $\FS(L^\an)$ the set of NAFS metrics on $L^\an$. Furthermore, a continuous metric $\phi$ on $L^\an$ is said to be \emph{continuous plurisubharmonic} (\emph{cpsh} for short) if $\phi$ can be written as the uniform limit of a decreasing net $\{\phi_i\}_i\subset \FS(L^\an)$. Denote by $\CPSH (L^\an)$ the set of cpsh metrics on $L^\an$, endowed with the topology of uniform convergence on $X^\an$ given by \eqref{eq: uniform norm}.
\end{Def}
By definition, we have a homeomorphism
$\FS(kL^\an)\simeq \FS(L^\an)$ for any $k\in \Z_{>0}$ defined
by $$\FS(kL^\an)\ni \phi \mapsto \frac{1}{k}\phi\in \FS(L^\an).$$
This homeomorphism extends to a homeomorphism
 $\CPSH(kL^\an)\simeq \CPSH(L^\an)$.
Moreover,  as in \cite[\S~3.7]{CLD12}, each $\phi\in \CPSH(L^\an)$ yields a positive Radon measure $\mathrm{MA}(\phi)$ on $X^\an$ of total mass $(L^d)$, where $d:=\dim X$.
This $\mathrm{MA}(\cdot)$ is called the \emph{Monge--Amp\`ere operator}.
More precisely, we have
\begin{equation}\label{eq: MA operator}
    \mathrm{MA}(\cdot):  \CPSH(L^\an)/\R\to \left\{\textrm{positive Radon measures on } X^\an \textrm{ of total mass } (L^{d}) \right\}.
\end{equation}
In particular, for a NAFS metric $\phi\in \FS(L^\an)$, the associated measure $\MA(\phi)$ is supported by a finite set of Shilov points.

The goal of this subsection is to study $\FS(L^\an)$ more explicitly, inspired by \cite{Don05}.
Motivated by the identification of the space of Hermitian norms on an $n$-dimensional complex vector space $V$ with
$\GL_n(\C)/U(n)$,
 we begin with the following observation:

\begin{Prop}\label{pr: structure theorem for strictly Cartesian norms}
    Let $V$ be an $n$-dimensional
    vector space over a non-Archimedean field $F:=(F,\va_F)$.
    Then we obtain the following bijection: $$\calN^\str (V)\simeq \GL_{n}(F)/\GL_n(F^\circ).$$
\end{Prop}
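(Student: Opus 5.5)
The plan is to fix a basis of $V$ and let $\GL_n(F)$ act on $\calN^\str(V)$ transitively, then identify the stabilizer of one chosen norm with $\GL_n(F^\circ)$. Fix a basis $\{v_1,\dots,v_n\}$ of $V$ and identify $V\simeq F^n$. For $g\in \GL_n(F)$, let $\nm_g$ be the norm for which $\{g v_i\}_i$ is an orthonormal basis:
$$\Bigl\|\sum_i a_i\, g v_i\Bigr\|_g:=\max_i |a_i|_F .$$
Equivalently, $\|w\|_g=\|g^{-1}w\|_0$, where $\nm_0$ is the sup norm in the coordinates $v_i$. By definition of strict Cartesianity, every element of $\calN^\str(V)$ has an orthonormal basis $\{e_i\}$. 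Writing $e_i=gv_i$ for a unique $g\in\GL_n(F)$, we see that every strictly Cartesian norm is of the form $\nm_g$. Hence the map $\GL_n(F)\to\calN^\str(V)$, $g\mapsto\nm_g$, is surjective.

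Next, we determine the fibres. Since $\|w\|_g=\|g^{-1}w\|_0$, we have $\nm_g=\nm_h$ if and only if $\nm_0$ is invariant under $k:=h^{-1}g$. So it suffices to show that the stabilizer of the sup norm $\nm_0$ on $F^n$ is exactly $\GL_n(F^\circ)$.

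First suppose $k\in\GL_n(F^\circ)$. The ultrametric inequality gives $\|kw\|_0\le\|w\|_0$, since all entries of $k$ satisfy $|k_{ij}|_F\le1$. Moreover $k^{-1}$ also has entries in $F^\circ$: indeed $\det k\in (F^{\circ})^\times$, and $k^{-1}$ is the adjugate divided by $\det k$. Applying the same inequality to $k^{-1}$ gives the reverse bound, so $\|kw\|_0=\|w\|_0$.

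Conversely, suppose $\|kw\|_0=\|w\|_0$ for all $w$. Taking $w=v_j$ shows that the $j$-th column of $k$ has sup norm $1$, so all entries of $k$ lie in $F^\circ$. The same argument applied to $k^{-1}$, which also preserves $\nm_0$, shows that $k^{-1}$ has entries in $F^\circ$. Therefore $k\in\GL_n(F^\circ)$.

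Combining these, $g\mapsto\nm_g$ descends to a bijection $\GL_n(F)/\GL_n(F^\circ)\simeq\calN^\str(V)$, where the quotient is by right multiplication.

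The only point requiring care is the identification of the stabilizer. This needs the ultrametric inequality together with the fact that an invertible matrix over $F^\circ$ with unit determinant has its inverse over $F^\circ$; both are elementary. The resulting bijection depends on the choice of basis $\{v_i\}$, but only up to the natural left translation by $\GL_n(F)$.
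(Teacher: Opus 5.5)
Your proof is correct and follows the same route as the paper. Both send a basis to the norm for which it is orthonormal, which gives a surjection $\GL_n(F)\surj\calN^\str(V)$, and then identify the fibres with $\GL_n(F^\circ)$-cosets. The only difference is in the fibre step: you compute the stabilizer of the sup norm directly from the ultrametric inequality, whereas the paper argues that equal norms have equal unit-ball lattices $\sum_i F^\circ e_i$, citing a lemma of Boucksom--Eriksson.
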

\begin{proof}
    By definition, we have a surjection $ \GL_n(F)\surj \calN^\str (V)$ given by
    $$\GL_n(F) \ni M_e:=(e_1 \cdots e_n)\longmapsto \nm_e:=\left(\sum_i a_i e_i \mapsto \max_i |a_i|_F \right) \in \calN^\str (V)$$
    for any $a_i\in F$,  where $e_i\in F^n$.
    If two bases $e:=\{ e_i\}$ and $f:=\{ f_i\}$ of $V$ induce the same Cartesian norm on $V$, the induced lattices $\Lambda_e:=\sum_i F^\circ e_i$ and  $\Lambda_f:=\sum_i F^\circ f_i$ coincide by \cite[Lemma~1.28]{BE21}, which implies $M_eM_f^{-1}\in \GL_n(F^\circ)$. Conversely, if $M_eM_f^{-1}\in \GL_n(F^\circ)$, then $M_e$ and $M_f$ induce the same lattice, which yields the same norm on $V$. Hence, the map $ \GL_n(F)\surj \calN^\str (V)$  induces the bijection $\GL_n(F)/\GL_n(F^\circ) \simeq \calN^\str (V)$.
\end{proof}

Recall that $X$ is assumed to be a geometrically connected smooth projective variety over $F$ with an ample line bundle $L$.
Assume that $L$ is globally generated.
Set $V:=H^0(X,L)$.
By \cite[Theorem 7.16]{BE21}, we may define the \emph{Fubini--Study map $\FS : \calN (V)\to \CPSH(L^\an)$} 
by
$$ \calN (V)\ni \nm \longmapsto \left(\FS(\nm)(x):= \log \sup_{s\in V\setminus\{0\}} \frac{|s(x)|}{||s||}\right)_{x\in X^\an}.$$
Furthermore,
by \cite[Lemma~7.17]{BE21},
for any $\nm\in \calN^\diag (V)$ with an orthogonal basis $\{s_i\}$ such that $-\log||s_i||=\lambda_i$, the associated metric $\FS(\nm)\in \CPSH(L^\an)$ is given by
$$ \FS(\nm)=\max_i\{\log |s_i|+\lambda_i \}.$$
Hence, $\FS(\calN^\diag (V))\subset \FS(L^\an)$.
Conversely, as in \cite[\S~6.1]{BE21},  any $\phi\in \CPSH(L^\an)$ induces the sup-norm $\mathrm{Sup}(\phi)\in \calN (V)$  defined by
$$\mathrm{Sup} (\phi)(s):= \sup_{X^\an} ||s||_\phi$$
 for each $s\in V$. 
 Here, the assumption that $X$ is reduced ensures that $\mathrm{Sup}(\phi)$ is indeed a norm.
Thus we define the map
$\mathrm{Sup} : \CPSH(L^\an)\to \calN (V).$
\begin{Def}
    
For each $k\in \Z_{>0}$, let
$V_k:=H^0(X,kL)$, and define
$$ \FS_k(L^\an):=\frac{1}{k}\FS(\calN^\diag (V_k)),$$
which is
 the class of NAFS metrics on $L^\an$ arising from diagonal norms on $V_k$, and call
 a metric in $\FS_k(L^\an)$ a \emph{NAFS metric on $L^\an$ at level $k$}.
In a similar manner, we also define
$$ \FS_k^\str (L^\an):=\frac{1}{k}\FS(\calN^\str (V_k)),$$
which is
 the class of NAFS metrics on $L^\an$ at level $k$ arising from strictly Cartesian norms on $V_k$.
\end{Def}

Let $F'/F$ be a complete field extension.
Since we assume that $X$ is smooth,  both
$X$ and $X_{F'}$ are reduced.
As in \cite[Lemma~6.1]{BE21}, we have a map $$(-)_{F'}: \CPSH(L^\an)\to \CPSH(L^\an_{F'})$$ 
induced by the pull-back to $L^\an_{F'}$ via the base change $X_{F'}\to X$.

\begin{Prop}
\label{pr: injectivity of the base change}
The restriction 
   of $(-)_{F'}$ to $\FS_k (L^\an) $, namely,
   $$ (-)_{F'}:\FS_k (L^\an) \to \CPSH(L^\an_{F'}),$$
   is injective.
\end{Prop}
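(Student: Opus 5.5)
The plan is to use the fact that pulling back metrics along the base change $\pi\colon X_{F'}^\an\to X^\an$ is given by composition with a surjective map. Given $\phi,\psi\in\FS_k(L^\an)$ with $\phi_{F'}=\psi_{F'}$, I want to deduce $\phi=\psi$.

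\emph{Step 1: surjectivity of $\pi$.} The base change $X_{F'}\to X$ induces a continuous map $\pi\colon X_{F'}^\an\to X^\an$, and this map is surjective. Concretely, for $x\in X^\an$ the fibre $\pi^{-1}(x)$ is identified with the Berkovich spectrum of $\scrH(x)\widehat\otimes_F F'$. This is a nonzero Banach algebra, so its spectrum is nonempty, which is the standard result in \cite{Ber90}.

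\emph{Step 2: pullback is composition with $\pi$.} I will check that $(-)_{F'}$ acts on local functions by composition. For $\phi\in\CPSH(L^\an)$, a local trivializing section $s$ of $L$ on $U$, and $y\in U_{F'}^\an$, the claim is
$$(\phi_{F'})_{s_{F'}}(y)=\phi_s(\pi(y)).$$
This follows from the construction in \cite[Lemma~6.1]{BE21}. For NAFS metrics it can also be checked directly from \eqref{eq: NAFS}. Indeed, for a regular function $f$ one has $|f_{F'}(y)|=|f(\pi(y))|$, because $\scrH(\pi(y))\hookrightarrow\scrH(y)$ is an isometric embedding. Hence
$$\tfrac1k\max_i\{\log|s_i|+\lambda_i\}$$
pulls back to the metric given by the same formula with the sections $(s_i)_{F'}$.

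\emph{Step 3: conclusion.} Suppose $\phi_{F'}=\psi_{F'}$. Then $\phi_s\circ\pi=\psi_s\circ\pi$ on $U_{F'}^\an$ for every local trivialization $s$ of $L$ on $U$. Since $\pi$ maps $U_{F'}^\an$ onto $U^\an$, this forces $\phi_s=\psi_s$, so $\phi=\psi$.

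This argument shows injectivity on all of $\CPSH(L^\an)$, not just on $\FS_k(L^\an)$. The only substantive input is the surjectivity of $\pi$, which I take from Berkovich's theory. I expect that step to be the main obstacle, and I will cite \cite{Ber90} for it rather than reprove it.
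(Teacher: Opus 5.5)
Your argument is correct, but it takes a different route from the paper. The paper never looks at the fibres of $\pi\colon X^\an_{F'}\to X^\an$ directly. Instead it passes to sup-norms: it uses \cite[Lemma~6.1]{BE21} to get $\mathrm{Sup}(k\phi)=\mathrm{Sup}(k\phi_{F'})|_{V_k}=\mathrm{Sup}(k\psi_{F'})|_{V_k}=\mathrm{Sup}(k\psi)$, and then recovers $k\phi=k\psi$ from the identity $\FS\circ\mathrm{Sup}=\id$ on $\FS(\calN(V_k))$ \cite[Lemma~7.23]{BE21}. That second step is what restricts the paper's proof to Fubini--Study metrics. You instead observe that $(-)_{F'}$ is composition with the surjective map $\pi$. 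This gives injectivity on all continuous metrics at once and uses no Fubini--Study structure. The two proofs rest on the same fact. The compatibility $\mathrm{Sup}(\phi_{F'})|_{V}=\mathrm{Sup}(\phi)$ that the paper invokes is itself immediate from the surjectivity of $\pi$, since $\sup_{X^\an_{F'}}\|s_{F'}\|_{\phi_{F'}}=\sup_{\pi(X^\an_{F'})}\|s\|_{\phi}$. So your version is shorter and more general. The paper's version has the merit of staying inside the norm-theoretic $\FS/\mathrm{Sup}$ formalism it uses afterwards, for instance in the remark comparing $\FS\circ\mathrm{Sup}$ with $\FS\circ\mathrm{Hilb}$. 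The one input you should cite precisely is the non-vanishing of $\scrH(x)\widehat\otimes_F F'$. Over a general (not spherically complete) $F$ this is not formal, but it is standard: it follows from Gruson's theorem that the tensor norm satisfies $\|a\otimes b\|=\|a\|\,\|b\|$, so $\|1\otimes 1\|=1$.
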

\begin{proof}
    
If
 two metrics $\phi$ and $\psi$ in $\frac{1}{k}\FS(\calN (V_k))$ satisfy $\phi_{F'}=\psi_{F'}$, then 
 it follows from \cite[Lemma~6.1]{BE21} that
 $\mathrm{Sup}(k\phi_{F'})=\mathrm{Sup}(k\psi_{F'})\in \calN(V_k\otimes F')$ implies $$\mathrm{Sup}(k\phi)=\mathrm{Sup}(k\phi_{F'})|_{V_k}=\mathrm{Sup}(k\psi_{F'})|_{V_k}=\mathrm{Sup} (k\psi)\in \calN(V_k).$$
By \cite[Lemma~7.23]{BE21}, we particularly have $\FS\circ \mathrm{Sup}=\id$ on $\FS(\calN (V_k))$.
It implies that $\mathrm{Sup}|_{\FS(\calN (V_k))}:\FS(\calN (V_k))\to \calN (V_k)$ is injective. 
Hence
we have $k\phi=k\psi\in \FS(\calN (V_k))$. Hence, the map $(-)_{F'}$ is injective on $\frac{1}{k}\FS(\calN (V_k))$, and particularly on $\FS_k(L^\an)=\frac{1}{k}\FS(\calN^\diag (V_k))$.
\end{proof}
\begin{Rem}\label{rm: Complex balanced as a fixed point}
Recall that, for a very ample line bundle $L$ on a complex manifold $X$, a complex balanced metric on $L$ can be characterized as a fixed point of $\FS\circ\mathrm{Hilb}$; see, for instance, \cite{Don05}. In contrast, as observed above, in the non-Archimedean setting, we have
$\FS\circ\mathrm{Sup}=\id$ on $\FS(\calN(V_k)).
$
In particular, every $\phi\in\FS(\calN(V_k))$ is a fixed point of $\FS\circ\mathrm{Sup}$. Therefore, to pursue the analogy with the complex case, one might need to introduce a map other than $\mathrm{Sup}$ as a non-Archimedean analogue of $\mathrm{Hilb}$.

\end{Rem}

\begin{Def}
 Keep the same situation as in \pref{pr: injectivity of the base change}.
    We define the \emph{restriction map}
    $$-|_L: (\FS_k(L^\an))_{F'}\to \FS_k(L^\an)$$ as the inverse of the induced bijection $(-)_{F'}:\FS_k (L^\an) \simeq (\FS_k(L^\an))_{F'}\subset \CPSH(L^\an_{F'})$.
    Moreover, we define the \emph{class
     $$\FS_k^\str(L^\an_{F'})|_L:=\left(\left(\FS_k(L^\an)\right)_{F'}\cap \FS_k^\str(L^\an_{F'})\right)|_L$$ of NAFS metrics on $L^\an$ at level $k$ arising from $\FS_k^\str(L^\an_{F'})$}.
\end{Def}

\begin{example}\label{eg: level k NAFS from strictly Cartesian norms}
Typical elements of $\FS_k^\str(L^\an_{F'})|_L$ are given as follows:
Suppose that $V_k=H^0(X,kL)$ admits a basis $\{s_i\}$ with no common zeros, and take $\lambda_i\in \log|F'^\times|$ for each $i$. 
Then a diagonalizable norm $\nm\in \calN^\str(V_k)$ with the orthogonal basis $\{s_i\}$ such that $-\log ||s_i||=\lambda_i$ yields 
$$\frac{1}{k}\FS(\nm)=\frac{1}{k}\max_i\{\log |s_i|+\lambda_i \}\in \FS_k^\str(L^\an_{F'})|_L.$$
Indeed,
take $a_i\in F'^\times$ with $\lambda_i=\log|a_i|$ for each $i$. Then $\{a_i s_i\}$ forms a basis of 
$V_{k,F'}:=H^0(X,kL_{F'})$. 
Since $\log |a_is_i|=\log |s_i|+\lambda_i$, 
the strictly Cartesian norm $\nm'$ on $V_{k, F'}$ with the orthonormal basis $\{a_i s_i\}$ yields $\frac{1}{k}\FS(\nm')|_L=\frac{1}{k}\FS(\nm)$.
Hence we have $\frac{1}{k}\FS(\nm)\in \FS_k^\str(L^\an_{F'})|_L$.
\end{example}

\subsection{Non-Archimedean balanced metrics}\label{ss: NA balanced metric}
In this subsection, we recall Fang's characterization of critical metrics over non-Archimedean fields, following \cite{Fan22}.
Originally, a critical metric was introduced in \cite{Zha96} as a special Fubini--Study metric for a polarized complex manifold $(X,L)$, where $L$ is a very ample line bundle on $X$. 
More precisely, it is a Fubini--Study metric 
induced by a closed embedding $X\inj  \PP(V^\vee)\simeq \PP_\C^{n},$
where $V:=H^0(X,L)$ and $n:=h^0(X,L)-1$, such that the associated Chow form $R_X$ minimizes the Chow norm $$\nm_\mathrm{Ch}$$ under the special linear action $\SL (V)$.
As Zhang's approach is purely algebro-geometric, we may apply it to the setting of non-Archimedean fields, so that we obtain a special NAFS metric induced by an optimal embedding into a projective space with respect to the Chow norm.
Named after Zhang's work, the NAFS metric is also called a \emph{critical metric} in \cite{Fan22}.

Before moving on to the non-Archimedean setting, we briefly recall some important properties of critical metrics in the complex setting. Firstly, as proven in \cite[Theorem~3.2]{Zha96}, if $(X,L)$ is Chow stable, then such a metric exists uniquely.
Secondly, 
as proven in 
\cite[Theorem~3]{Don01}, 
if $\Aut (X,L)$ is discrete and $X$ admits a constant scalar curvature K\"ahler metric $\omega_\infty$  (cscK metric for short) in the first Chern class $c_1(L)$, 
then 
$(X,L)$ is asymptotically Chow stable,
namely $(X,kL)$ is Chow stable for sufficiently large $k\in \Z_{>0}$.
Furthermore,
the critical metric $\omega_k$ for $(X,kL)$ is also called a \emph{balanced metric at level $k$}, and
the sequence of these metrics $\omega_k$
converges in $\mathscr{C}^\infty$ to the cscK metric $\omega_\infty$ as $k\to \infty$.
In particular, these assumptions are satisfied when $(X,L)$ is a polarized Calabi--Yau manifold (cf. the proof in \pref{pr: asymptotically Chow stable}).
In this paper,
motivated by the above work of Donaldson \cite{Don01}, we introduce the term \emph{non-Archimedean balanced metric (at level $k$)} for a variant of Fang's critical metric, and study such metrics when $(X,L)$ is a polarized Calabi--Yau variety over a non-Archimedean field.

In what follows, 
we consider the setting of non-Archimedean fields more precisely.
Let $X$ be a projective variety with a very ample line bundle $L$ over a non-trivially valued non-Archimedean field $F$ of characteristic zero such that $F=\overline{F}$.
Consider a closed embedding $\iota:X\inj \PP(V^\vee)$ induced by the complete linear system $|L|$, where $V^\vee$ is the dual vector space of $V:=H^0(X,L)$. Set $d:=\dim X$ and $\delta:=\deg \iota=(L^d)$.
Then it yields
a point $[X]$ in the Chow variety $\mathrm{Chow}_{d,\delta} (\PP (V^\vee))$ parameterizing $d$-dimensional cycles of degree $\delta$ in $\PP(V^\vee)$.
Here, we may identify each closed point in $\PP(V)$ with the corresponding  hyperplane in $\PP(V^\vee)$.
Observe that
$$\left\{ (H_0,\dots, H_d)\in \PP(V)^{d+1} \ \middle| \ H_0\cap\cdots \cap H_d \cap \iota(X) \neq \emptyset \textrm{ in } \PP(V^\vee)  \right\}$$
is a divisor in $\PP(V)^{d+1}$ of multi-degree $(\delta,\dots, \delta)$.
Let $$R_X\in   W:=\left(\mathrm{Sym}^{\delta} (V^\vee)\right)^{\otimes (d+1)}$$
denote the defining equation of this divisor. 
This $R_X$ is called the \emph{Chow form of $X$}.
By definition, $R_X$ is unique up to scalar dilation.
Hence, the induced class $$[R_X]\in \PP\left( W\right)$$
is uniquely determined, and called the \emph{Chow point of $X$}.
Furthermore, as in \cite[Chapter~4, Theorem~1.1]{GKZ94}, 
a map 
\begin{equation}\label{eq: Chow embedding+}
    \mathrm{Chow}_{d,\delta} (\PP (V^\vee))\to \PP\left( W\right) ; [X]\mapsto [R_X]
\end{equation}
factors as the composition of the following two morphisms:
$$\mathrm{Chow}_{d,\delta} (\PP(V^\vee)) \inj \PP\left(H^0\left(\mathrm{Gr}(d+1,V), \calO(\delta)\right)\right) \inj \PP\left( W\right).$$
The first morphism  $\mathrm{Chow}_{d,\delta} (\PP(V^\vee)) \inj \PP\left(H^0\left(\mathrm{Gr}(d+1,V), \calO(\delta)\right)\right)$ 
is
the \emph{Chow embedding},
where $\mathrm{Gr}(d+1,V)$ is the Grassmannian variety, parametrizing $(d+1)$-dimensional linear subspaces in $V$, and 
$\calO(1)$ on $\mathrm{Gr}(d+1,V)$ is induced by the Pl\"ucker embedding.
The second morphism is induced by 
$$H^0\left(\mathrm{Gr}(d+1,V), \calO(\delta)\right) \inj  W.$$
In particular, the map \eqref{eq: Chow embedding+} is a morphism.
Furthermore, by construction, this morphism is equivariant under the special linear action $\SL (V)$. 

\begin{Def}
    In the above setting,
    the polarized variety $(X,L)$ over $F$ is said to be \emph{Chow stable} (resp. \emph{semistable}) if the Chow point $[R_X]\in \PP\left( W\right)$ is GIT stable (resp. \emph{semistable}) with respect to the special linear action $\SL (V)$.
    In addition, $(X,L)$ is  said to be \emph{asymptotically Chow stable}  if $(X,kL)$ is Chow stable for sufficiently large $k\in \Z_{>0}$. 
\end{Def}
A strictly Cartesian norm $\nm\in \calN^\str (V)$ on $V$ induces a dual norm $\nm^\vee\in \calN^\str (V^\vee)$ on $V^\vee$. Furthermore, the dual norm $\nm^\vee\in \calN^\str (V^\vee)$ induces a strictly Cartesian norm
$$\nm_\mathrm{Ch}\in \calN^\str \left(W\right)$$
on $W=\left(\mathrm{Sym}^{\delta} (V^\vee)\right)^{\otimes (d+1)}$, which is called the \emph{Chow norm}.
In analogy with the Kempf--Ness work \cite{KN79}, we may consider a function 
$p_X: \SL (V)^\an \to \R$ defined by
$$p_X(g):=||g^*R_X||_\mathrm{Ch}.$$
Here, we regard $\SL (V)$ as a group affine scheme over $F$, and consider its analytification 
$\SL (V)^\an$.
Indeed, by construction, both
the action of $\SL (V)$ on the morphism \eqref{eq: Chow embedding+} and the Chow norm
$\nm_\mathrm{Ch}$ are compatible with valued field extension. Hence, for any $g\in \SL (V)^\an$,
the value $p_X(g)=||g^*R_X||_\mathrm{Ch}$ makes sense after base change to the completed residue field $\scrH(g)$ at $g\in  \SL (V)^\an$.
Properties of the function $p_X$ are studied as part of a non-Archimedean version of the Kempf--Ness theory, which first appeared in \cite{Bur92} and was further developed in \cite[\S~3]{Mac17}.

\begin{Rem}
    In \cite{Fan22}, our function $p_X$ was denoted by $\mu_X$. However, in \cite{Zha96}, $\mu(g,[X])$ was introduced as
    $\mu(g,[X])=\log p_X(g)-\log p_X(e)$.
    To avoid confusion and to maintain consistency with the complex setting, we use the notation $p_X$ instead of Fang’s notation.
\end{Rem}

Denote by $U_{\nm}$ the subset of $\SL (V)^\an$ preserving the strictly Cartesian norm $\nm$ on $V$.
Since $\SL (V)^\an$ acts linearly on $V$,
 it follows from the same argument as in the proof of \pref{pr: structure theorem for strictly Cartesian norms} that
 $U_{\nm}$ is a \emph{maximal compact subgroup} in the sense of \cite[\S~3, Definition~2.21]{Mac17}.
In the same way, we obtain a maximal compact subgroup $U_{\nm_\mathrm{Ch}}$ of 
$\SL(V)^\an$
with respect to the Chow norm $\nm_\mathrm{Ch}$ on $W$.

\begin{Prop}\label{pr: NA unitary}
Keep the same notation as above. Then we have 
     $U_{\nm}= U_{\nm_{\mathrm{Ch}}}$.
\end{Prop}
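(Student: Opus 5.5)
Both $U_{\nm}$ and $U_{\nm_{\mathrm{Ch}}}$ are subsets of $\SL(V)^\an$: the first consists of points $g$ with $\|g v\|=\|v\|$ for all $v\in V\otimes\scrH(g)$, the second of points with $\|g^*w\|_{\mathrm{Ch}}=\|w\|_{\mathrm{Ch}}$ for all $w\in W\otimes\scrH(g)$. Since everything is compatible with valued field extension, I will work pointwise over $F':=\scrH(g)$. Choose an orthonormal basis $e_1,\dots,e_n$ of $V$ for $\nm$. By the argument of \pref{pr: structure theorem for strictly Cartesian norms}, $g\in U_{\nm}$ if and only if the matrix of $g$ in this basis lies in $\SL_n(F'^\circ)$, i.e. $g$ preserves the lattice $\Lambda=\bigoplus F'^\circ e_i$. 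So the statement reduces to the following: $g\in \SL_n(F')$ preserves the unit lattice of $W$ if and only if $g\in\SL_n(F'^\circ)$.

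For the inclusion $U_{\nm}\subset U_{\nm_{\mathrm{Ch}}}$, I will show that the unit ball of $\nm_{\mathrm{Ch}}$ is a lattice functorially built from $\Lambda$. The dual norm has orthonormal basis $e_i^\vee$, hence unit ball $\Lambda^\vee$. The monomials in the $e_i^\vee$ form an orthonormal basis of $\mathrm{Sym}^\delta(V^\vee)$, and tensor products of these form an orthonormal basis of $W$. Consequently the unit ball of $W$ is $\left(\mathrm{Sym}^\delta_{F'^\circ}\Lambda^\vee\right)^{\otimes(d+1)}$. Any $g\in\GL(\Lambda)$ acts on this $F'^\circ$-module by integral matrices, and so does $g^{-1}$; hence $g$ preserves the lattice and therefore the norm.

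The converse is the main obstacle, because $V\mapsto W$ is not faithful: scalars $\zeta$ act on $W$ by $\zeta^{\pm\delta(d+1)}$. Restricting to $\SL$ helps here. Suppose $g\in\SL_n(F')$ preserves the lattice $\Lambda_W$ but $g\notin\SL_n(F'^\circ)$. I would use the Cartan decomposition $g=k_1 a k_2$ with $k_i\in\SL_n(F'^\circ)$ (or $\GL_n(F'^\circ)$) and $a=\diag(a_1,\dots,a_n)$. By the first step the $k_i$ preserve $\Lambda_W$, so $a$ preserves $\Lambda_W$ as well. The diagonal element $a$ acts on the monomial basis $(e^\vee)^{m_0}\otimes\cdots\otimes(e^\vee)^{m_d}$ by the scalar $\prod_j a^{-m_j}$, and lattice preservation forces every such scalar to have absolute value $1$. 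Taking all $m_j=\delta\,\epsilon_i$ gives $|a_i|^{\delta(d+1)}=1$, so $|a_i|=1$ for all $i$. Hence $a\in\GL_n(F'^\circ)$ and $g\in\SL_n(F'^\circ)$, that is, $g\in U_{\nm}$. In fact this argument only uses $\det g=1$ to discard scalars, and works even without it once individual $|a_i|$ are forced to be $1$.

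Finally, I must check that both memberships are tested consistently at non-rigid points of $\SL(V)^\an$. This follows because $\nm$, $\nm_{\mathrm{Ch}}$ and the orthonormal bases above are stable under base change to $\scrH(g)$, and the Cartan decomposition holds over any non-Archimedean field, or after passing to a further extension with nontrivial value group if needed.
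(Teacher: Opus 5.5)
Your proof is correct. Your first step (the unit ball of $\nm_{\mathrm{Ch}}$ is $(\mathrm{Sym}^\delta_{F'^\circ}\Lambda^\vee)^{\otimes(d+1)}$, hence it is preserved by $\GL_n(F'^\circ)$) is exactly the paper's ``by construction'' inclusion $U_{\nm}\subset U_{\nm_{\mathrm{Ch}}}$. For the reverse inclusion, however, the paper argues abstractly: $U_{\nm}$ is a maximal compact subgroup of $\SL(V)^\an$ in the sense of \cite{Mac17}, and $U_{\nm_{\mathrm{Ch}}}$ is a compact subgroup containing it, so the two coincide. You instead compute directly: the Cartan decomposition reduces you to a diagonal element $a$, and the extreme weight vectors $(e_i^\vee)^{\delta}\otimes\cdots\otimes(e_i^\vee)^{\delta}$ force $|a_i|^{\delta(d+1)}=1$.

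The paper's route is two lines, but it delegates the real content to Maculan's maximality theorem, which is a statement about the full analytic group and not just its rational points. It also relies on $U_{\nm_{\mathrm{Ch}}}$ being compact in $\SL(V)^\an$, which ultimately comes from $\SL(V)\to\GL(W)$ having finite kernel. Your route is elementary and self-contained, and it makes that finite-kernel phenomenon explicit. Central elements $\zeta$ act on $W$ through the nonzero power $\zeta^{\pm\delta(d+1)}$, so the only ones that can preserve $\nm_{\mathrm{Ch}}$ have $|\zeta|=1$, and these already lie in $U_{\nm}$.

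One small simplification: the decomposition of $\GL_n(F')$ as $\GL_n(F'^\circ)\cdot(\text{diagonal matrices})\cdot\GL_n(F'^\circ)$ holds over any non-Archimedean field. It follows from Gaussian elimination in which each step uses a pivot of maximal absolute value, so you never need to pass to a further extension of $\scrH(g)$.
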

\begin{proof}
By construction, we have $U_{\nm}\subset U_{\nm_{\mathrm{Ch}}}$.
Since $U_{\nm}$ is maximal among the compact subgroups of $\SL(V)^\an$,
we obtain $U_{\nm}= U_{\nm_{\mathrm{Ch}}}$.
\end{proof}

Set $n:=\dim V-1$. Consider the group $\SL_{n+1}(F^\circ)$ of $F^\circ$-valued points of the group scheme $\SL_{n+1}$ over $\mathbb Z$. After fixing an isomorphism $V\simeq F^{n+1}$, we regard $\SL_{n+1}(F^\circ)$ as a subset of $\SL(V)^\an$. As observed in \pref{pr: structure theorem for strictly Cartesian norms}, $\SL_{n+1}(F^\circ)$ is contained in $U_{\nm}$.
In other words,
the function $p_X$ is independent of the choice of an orthonormal basis for the strictly Cartesian norm $\nm\in \calN^\str (V)$. This ensures that the following notion is well-defined:

\begin{Def}\label{df: NA critical metric}
    A NAFS metric $\phi\in \FS_1^\str(L^\an)$ associated to a strictly Cartesian norm $\nm\in \calN^\str(V)$ is said to be \emph{critical} if the function $p_X$ attains its minimum at the identity element $e:=\id$ of $\SL(V)^\an$.
    Similarly, a NAFS metric $\phi\in \FS_k^\str(L^\an)$ at level $k$ associated to a strictly Cartesian norm $\nm\in \calN^\str(V_k)$, where $V_k:=H^0(X,kL)$, is also said to be \emph{critical}  if the function $p_X$, defined with $V_k$ in place of $V$, attains its minimum at the identity element $e=\id$ of $\SL(V_k)^\an$.
\end{Def}

\begin{Rem}\label{rm: uniqueness of NA bal is unclear, existence is slightly different}
    Since $R_X$ is determined only up to scalar multiplication, \pref{df: NA critical metric} is independent of the choice of $R_X$.
   As shown in \cite[Theorem~2.7]{Zha96}, such a metric always exists when $(X,L)$ is  Chow semistable, whereas this statement fails in general in the complex setting.
   On the other hand, 
   to the best of our knowledge,
   it remains unclear whether such a metric is unique when $(X,L)$ is Chow stable, although this statement holds in the complex setting.
   This problem can be generalized to the question of whether a non-Archimedean analogue of symplectic reduction holds.
\end{Rem}

We now recall Fang's characterization of critical metrics, following \cite{Fan22}.
Consider a strictly Cartesian norm $\nm\in \calN^\str(V)$ on $V:=H^0(X,L)$ with an orthonormal basis $\{s_i\}_{i\in I}$ indexed by $I:=\{0,\dots,n\}$.
Since $\MA(\FS(\nm))$ is finite and discrete, we may write 
$$\MA(\FS(\nm))=\sum_{x \in S}w_x\delta_x$$
for some  finite set $S \subset X^\an$ and $w_x> 0$ for each $x\in S$, where
$\delta_x$ is the Dirac measure at $x$.
Consider 
$\R^{d+1}$ with
a basis $\{e_i\}_{i\in I}$ indexed by the same ordered finite set $I$. Set $\mathbf{1}:=\sum_{i\in I} e_i\in \R^{n+1}$, and consider a quotient space $\R^{n+1}/\R\mathbf{1}(\simeq \R^n)$.
Denote by $\mathbf{0}$ the origin of the quotient space $\R^{n+1}/\R\mathbf{1}$.
For the above $x\in S$, define 
\begin{equation} \label{eq: index set}
I(x):=\{i\in I\ |\ \FS(\nm)_{s_i}(x)=-\log |s_i|_{\FS(\nm)}(x)=0\},    
\end{equation}
and let $\Delta_{I(x)}$ be the simplex in  $\R^{n+1}/\R\mathbf{1}$ spanned by (images of) $\{e_i\}_{i\in I(x)}$.

\begin{Def}\label{df: MA polytope}
In the above setting, 
for each orthonormal basis $\{s_i\}$ for $\nm\in \calN^\str(V)$,
    the \emph{Monge--Amp\`ere polytope $P_{\MA}(\{s_i\})$} is defined by
    $$ P_{\MA}(\{s_i\}):=\sum_{x\in S} w_x \Delta_{I(x)}\subset \R^{n+1}/\R\mathbf{1},$$
    where the above sum means the Minkowski sum of polytopes $\Delta_{I(x)} \subset \R^{n+1}/\R\mathbf{1}$ scaled by $w_x$.
\end{Def}

Through an isomorphism $V\simeq F^{n+1}$, the action of $g\in \SL_{n+1}(F^\circ)\subset U_{\nm}$ on $V$ induces another orthonormal basis $\{gs_i\}_{i\in I}$ with respect to the norm $\nm$ on $V$. Thus,
we may obtain another Monge--Amp\`ere polytope 
$P_{\MA}(\{gs_i\})$.

\begin{Thm}[{\cite[Theorem~1.1]{Fan22}}]\label{th: NA balancing condition}
Let $(X,L)$ be a polarized projective variety over a non-trivially valued non-Archimedean field $F$ such that $F=\overline{F}$.
    Assume that $L$ is very ample, and
    $(X,L)$ is Chow stable.
    Then, for any $\nm\in \calN^\str(V)$, the following are equivalent:
    \begin{enumerate}
        \item $\FS(\nm)\in \FS_1^\str(L^\an)$ is critical,
        \item $P_{\MA} (\{s_i\})\ni \mathbf{0}$ for some orthonormal basis $\{s_i\}$ of $H^0(X,L)$ with respect to $\nm$.
    \end{enumerate}
\end{Thm}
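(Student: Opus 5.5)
The equivalence is a non-Archimedean Kempf--Ness criterion, so I would study the convexity of $p_X$ along one-parameter subgroups and compute its directional derivatives via the Monge--Amp\`ere measure. The plan has three steps.

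\textbf{Step 1: reduce to diagonal directions.} By the Cartan-type decomposition $\SL_{n+1}(F)=\SL_{n+1}(F^\circ)\,T(F)\,\SL_{n+1}(F^\circ)$ together with \pref{pr: NA unitary}, every $g\in\SL(V)(F)$ can be written as $g=u\,t\,u'$ with $u,u'\in U_{\nm}$ and $t$ diagonal in the orthonormal basis $\{u's_i\}$. Since $\nm_\mathrm{Ch}$ is $U_{\nm}$-invariant, we get $p_X(g)=p_X(tu')$. It therefore suffices to compare $p_X(e)$ with $p_X(t)$, where $t=\diag(a_i)$ acts on some orthonormal basis. For points of $\SL(V)^\an$ that are not rational, I would use density of rational points together with continuity of $p_X$, which is available because $F=\overline F$ is nontrivially valued.

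\textbf{Step 2: the key computation.} For a diagonal $t$ with $\log|a_i|=\lambda_i$ and $\sum\lambda_i=0$, I would show that
$$\log p_X(t)-\log p_X(e)=\Psi(\lambda),$$
where $\Psi$ is a convex, piecewise linear function of $\lambda\in\R^{n+1}/\R\mathbf 1$. The idea is to identify $\log\|t^*R_X\|_\mathrm{Ch}$ with a Deligne pairing / energy of the metric $\max_i\{\log|s_i|+\lambda_i\}$. Its derivative in $\lambda$ is then given by integrating against $\MA$ along the segment, and this produces the tropical description
$$\Psi(\lambda)\ \ge\ \max_{v\in P_{\MA}(\{s_i\})}\langle v,\lambda\rangle \quad\text{near }0.$$
More precisely, I expect the one-sided directional derivative at $0$ in direction $\lambda$ to equal $\sum_x w_x\max_{i\in I(x)}\lambda_i$. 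This is the support function of $P_{\MA}(\{s_i\})$, since the weights $w_x$ and the index sets $I(x)$ of \eqref{eq: index set} describe which sections realize the maximum at the Shilov points. I expect this to be the main obstacle: it requires a non-Archimedean variation formula for the Chow norm, in the spirit of Zhang and Burnol. I would first try to derive it from the Zhang/Chambert-Loir height formula expressing $\log\|R_X\|_\mathrm{Ch}$ as a mixed energy $E(\FS(\nm))$, combined with the derivative formula $\frac{d}{ds}E(\phi+s f)=\int f\,\MA$ for FS metrics.

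\textbf{Step 3: conclude using convexity.} Convexity of $\Psi$ (Kempf--Ness convexity, as in \cite[\S~3]{Mac17}) turns the local statement into a global one.

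For (ii)$\Rightarrow$(i): if $\mathbf 0\in P_{\MA}(\{s_i\})$, the support function is nonnegative in every direction, so $\Psi\ge0$. For directions diagonal in other orthonormal bases $\{gs_i\}$, Step 1 together with the invariance of $P_{\MA}$ considerations reduces to this case. Hence $e$ is a global minimum.

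For (i)$\Rightarrow$(ii): if $\mathbf 0\notin P_{\MA}(\{gs_i\})$ for every $g\in\SL_{n+1}(F^\circ)$, choose a separating $\lambda$ for a suitable basis, which after scaling we may take in $\log|F^\times|$ since the value group is divisible. The directional derivative is then negative, so $p_X(t)<p_X(e)$ for small $t$, contradicting criticality.

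Chow stability enters through properness of $p_X$ on $\SL(V)^\an/U_{\nm}$. This guarantees that a minimum exists and that the convexity argument does not degenerate.
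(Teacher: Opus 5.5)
\paragraph{The gap is in Step~2.} The one-sided derivative of $\Psi$ at $0$ in the direction $\lambda$ is \emph{not} $\sum_x w_x\max_{i\in I(x)}\lambda_i$.

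Test it on $X=\mathbb{P}^n$, $L=\mathcal{O}(1)$. Your formula makes no use of stability, so this is a fair test. There $R_X=\det$, so $p_X$ is constant on $\SL(V)$. On the other hand, $\MA(\FS(\nm))$ is a single Dirac mass at the Gauss point with $I(x)=I$, so your formula predicts the value $\max_i\lambda_i>0$ for every nonzero $\lambda$ with $\sum_i\lambda_i=0$.

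The reason is that $\frac{d}{ds}E(\phi+sf)=\int f\,\MA(\phi)$ does not apply here. The path $\phi_{s\lambda}:=\max_i\{\log|s_i|+s\lambda_i\}$ is not of the form $\phi_0+sf$. The quotients $(\phi_{s\lambda}-\phi_0)/s$ converge only pointwise to the usc function $x\mapsto\max_{i\in I(x)}\lambda_i$, not uniformly near the support of $\MA(\phi_0)$, which is exactly where $\MA(\phi_{s\lambda})$ lives.

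What survives is the concavity inequality $E(\phi_{s\lambda})-E(\phi_0)\le\int(\phi_{s\lambda}-\phi_0)\,\MA(\phi_0)$. For small $s>0$ this reads $\Psi(s\lambda)\le (d+1)\,s\sum_x w_x\max_{i\in I(x)}\lambda_i$. This is the reverse of the inequality you wrote, and note the factor $d+1$.
\begin{itemize}
\item This inequality is enough for your (i)$\Rightarrow$(ii) argument. Criticality forces the right-hand side to be $\ge 0$ for all zero-sum $\lambda$, so $\mathbf 0\in P_{\MA}(\{s_i\})$ by separation, and in fact for \emph{every} orthonormal basis.
\item It gives nothing for (ii)$\Rightarrow$(i).
\end{itemize}

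\paragraph{Direction (ii)$\Rightarrow$(i) cannot be repaired this way.} Your $\Psi$ is exactly the tropicalization of the Chow form on the diagonal torus:
$\Psi(\lambda)=\max_\alpha(\log|c_\alpha|+\langle w_\alpha,\lambda\rangle)-\max_\alpha\log|c_\alpha|$.
Its directional derivatives are therefore the support function of the weight polytope of the reduced Chow form. Combined with your Cartan reduction and Hilbert--Mumford over $\widetilde F$, criticality is equivalent to GIT semistability of the Chow point of the special fibre. The data $(w_x,I(x))$ record only the degrees of the special-fibre components and which basis sections vanish on them, and they do not determine that polytope.

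\paragraph{Explicit example.} Take the smooth cubic $X=\{x^2y-xy^2=\varpi z^3\}\subset\mathbb{P}^2$ with $0<|\varpi|<1$ and $L=\mathcal{O}(1)|_X$; smooth plane cubics are Chow stable. Use the norm with orthonormal basis $x,y,z$.
\begin{itemize}
\item The special fibre is three concurrent lines, which is unstable. Let $\lambda=\pm(\epsilon,\epsilon,-2\epsilon)$, with the sign chosen to match your convention, and $0<\epsilon<-\tfrac19\log|\varpi|$. Then $\log p_X(t_\lambda)-\log p_X(e)=\max\{-3\epsilon,\ \log|\varpi|+6\epsilon\}=-3\epsilon<0$. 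So $\FS(\nm)$ is not critical.
\item Yet $\MA(\FS(\nm))$ consists of three unit masses with $I$-sets $\{1,2\}$, $\{0,2\}$, $\{0,1,2\}$. Since $\tfrac12(e_1+e_2)+\tfrac12(e_0+e_2)+\tfrac12(e_0+e_1)=\mathbf 1$, we get $\mathbf 0\in P_{\MA}(\{x,y,z\})$.
\item The same holds for every orthonormal basis, because three concurrent lines contain at most two coordinate lines.
\end{itemize}

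So with the definitions as transcribed here, (ii)$\Rightarrow$(i) fails, and your Step~3 cannot be completed. Separately, your passage in Step~3 to tori that are diagonal in other bases, via ``invariance of $P_{\MA}$'', was unjustified: $P_{\MA}$ depends on the chosen basis. Any proof of the converse needs an input beyond $\mathbf 0\in P_{\MA}$, such as semistability of the reduction.
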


\begin{Cor}
\label{cr: independence of the choice of orthonormal basis}
The conditions in \pref{th: NA balancing condition} are also equivalent to the following:
\begin{itemize}
        \item[(iii)] $P_{\MA} (\{s_i\})\ni \mathbf{0}$ for any orthonormal basis $\{s_i\}$ of $H^0(X,L)$ with respect to $\nm$.
    \end{itemize}
\end{Cor}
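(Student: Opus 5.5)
The corollary claims that, once we know $\FS(\nm)$ is critical, the Monge--Amp\`ere polytope contains $\mathbf{0}$ for \emph{every} orthonormal basis, not just for one. The plan is to prove the cycle (i)$\Rightarrow$(iii)$\Rightarrow$(ii)$\Rightarrow$(i). The implication (iii)$\Rightarrow$(ii) is trivial, since an orthonormal basis exists: $\nm$ is strictly Cartesian. The implication (ii)$\Rightarrow$(i) is part of \pref{th: NA balancing condition}. So the only content is (i)$\Rightarrow$(iii).

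We will use the fact that criticality is a property of the norm $\nm$ alone, not of a chosen basis. Indeed, $p_X$ depends only on $\nm$, and by the discussion before \pref{df: NA critical metric}, changing the orthonormal basis by an element of $\SL_{n+1}(F^\circ)\subset U_{\nm}$ does not change $p_X$. Fix any orthonormal basis $\{s_i\}$ of $H^0(X,L)$ for $\nm$ and assume (i). We then run the direction (i)$\Rightarrow$(ii) of \pref{th: NA balancing condition} with this particular basis. The key step is to check that Fang's argument for (i)$\Rightarrow$(ii) gives the conclusion for an arbitrary orthonormal basis.

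The natural argument for this is as follows. Restrict $p_X$ to the diagonal torus $T\subset\SL(V)$ determined by $\{s_i\}$. Then $\log p_X$ along $T^\an$, pulled back through the tropicalization $T^\an\to\R^{n+1}/\R\mathbf{1}$ (weights of the diagonal entries), is a convex piecewise-linear function. Its subdifferential at the origin is $P_{\MA}(\{s_i\})$, up to the normalization by $\delta$ and the translation by the barycenter. This comes from the slope formula: the derivative of the Chow norm along a one-parameter subgroup equals an integral of the twisted FS potential against $\MA(\FS(\nm))$, which on the discrete measure $\sum_x w_x\delta_x$ is exactly the support function of $\sum_x w_x\Delta_{I(x)}$. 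If $e$ minimizes $p_X$ on all of $\SL(V)^\an$, it minimizes it on $T^\an$. A convex function has a minimum at $0$ only if $0$ lies in its subdifferential, so $\mathbf{0}\in P_{\MA}(\{s_i\})$. Since $\{s_i\}$ was arbitrary, this gives (iii).

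The main obstacle is the justification that the subdifferential of $\log p_X|_{T}$ at the origin is (a normalized translate of) $P_{\MA}(\{s_i\})$ for \emph{each} orthonormal basis, i.e., that this identification in \cite{Fan22} did not use a special choice of basis. I would handle it in one of two ways. The first is to quote Fang's slope formula directly, observing that it is stated for an arbitrary orthonormal basis. The second is to avoid it: given two orthonormal bases related by $g\in\SL_{n+1}(F^\circ)\subset U_{\nm}$, note $p_X(hg)=p_X(h)$ for all $h$. Hence the criticality hypothesis is symmetric in the two bases, and applying the theorem's (i)$\Rightarrow$(ii) proof to the torus of either basis yields the inclusion for that basis.
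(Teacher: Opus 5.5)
Your argument is essentially the paper's. Criticality is a property of $\nm$ alone, so the implication (i)$\Rightarrow$(ii) of \pref{th: NA balancing condition}, applied to an arbitrary orthonormal basis, yields (iii). You are right that this needs the theorem in its per-basis form (for each orthonormal basis, (i) holds iff $\mathbf{0}\in P_{\MA}(\{s_i\})$), which the paper's one-line proof uses tacitly.

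One correction to your sketch of the mechanism. The subdifferential at $e$ of $\log p_X$ restricted to the diagonal torus is not a normalized copy of $P_{\MA}(\{s_i\})$. It is the Chow polytope, with respect to $\{s_i\}$, of the special fiber of the model of $X$ defined by the lattice of $\nm$. This polytope is only contained in $(d+1)P_{\MA}(\{s_i\})$ and is in general strictly smaller. For instance, for a conic whose reduction is $x_0x_2=x_1^2$ it is a segment, whereas $P_{\MA}(\{s_i\})=2\Delta_{\{0,1,2\}}$ is a triangle.

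Correspondingly, your slope formula holds only as the inequality $\frac{d}{ds}\big|_{s=0^+}E(\phi_s)\le\int\dot\phi_0\,\MA(\FS(\nm))$ along a torus ray $\phi_s$, which is just concavity of the energy. This containment is exactly what (i)$\Rightarrow$(iii) requires. So your torus argument, once corrected, proves (i)$\Rightarrow$(iii) directly for every orthonormal basis, without relying on how Fang's theorem is quantified.
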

\begin{proof}
    Since the critical condition (i) in \pref{th: NA balancing condition} depends only on 
    $\FS(\nm)$, and in particular only on
    $\nm\in \calN^\str(H^0(X,L))$, the balancing condition (ii)  in \pref{th: NA balancing condition} does not depend on the choice of an orthonormal basis $\{s_i\}$.
\end{proof}

We now turn to the application of this theory to the situation considered in the following sections. Namely, consider a polarized projective variety $(X,L)$ over $K:=\C((t))$.
Let $\CK:=\widehat{\overline{K}}$ be a \emph{completed algebraic closure  of $K$} equipped with the valuation $\va_\CK$ extending $\va_K$.
Here, as in \cite[Example~1.1]{BE21}, the non-Archimedean field $\CK$ is realized as the field of formal series $$f=\sum_{r\in \Q}a_rt^r$$
with support $\supp f:=\{r\in \Q \ |\ a_r\neq 0\}$, where $a_r\in \C$, containing only finitely many elements with a given upper bound.
Since $\CK$ satisfies all the assumptions imposed on the field $F$ above, after base change to $\CK$, we obtain the following:
\begin{Cor} \label{cr: NA balancing in our situation}
    Let $(X,L)$ be a polarized projective variety over $K$.
    Assume that $kL$ is very ample, and
    $(X,kL)$ is Chow stable for some $k\in \Z_{>0}$, which is automatically satisfied for sufficiently large $k$ when $(X,L)$ is asymptotically Chow stable.
    Then, for any $\nm\in \calN^\str(H^0(X_{\C_K},kL_{\C_K}))$, the following are equivalent:
    \begin{itemize}
        \item $\frac{1}{k}\FS(\nm)\in \FS_k^\str(L^\an_{\C_K})$ is critical,
        \item $P_{\MA} (\{s_i\})\ni \mathbf{0}$ for some orthonormal basis $\{s_i\}$ of $H^0(X_{\C_K},kL_{\C_K})$ with respect to $\nm$,
        \item $P_{\MA} (\{s_i\})\ni \mathbf{0}$ for any orthonormal basis $\{s_i\}$ for $\nm$.
    \end{itemize}
\end{Cor}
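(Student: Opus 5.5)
The plan is to deduce \pref{cr: NA balancing in our situation} directly from \pref{th: NA balancing condition} and \pref{cr: independence of the choice of orthonormal basis}, applied over $F:=\CK$ to the base-changed pair $(X_{\CK},kL_{\CK})$. No new analysis is needed; the work is to check that every hypothesis of \pref{th: NA balancing condition} survives the base change $K\to \CK$, and that the notion of criticality at level $k$ in \pref{df: NA critical metric} matches the level-one notion for $kL$.

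First, I check the field hypotheses. The field $\CK$ is a non-trivially valued non-Archimedean field, since $|t|_{\CK}=e^{-1}$. It has characteristic zero, and it is algebraically closed, as the completion of an algebraic closure; its description as a field of Hahn-type series recalled above confirms this. Next, $(X_{\CK},kL_{\CK})$ is a polarized projective variety. Very ampleness of $kL$ is preserved by base change. Flat base change gives $H^0(X_{\CK},kL_{\CK})=H^0(X,kL)\otimes_K\CK$, so the embedding by the complete linear system is the base change of the one over $K$. Consequently, the Chow point $[R_{X_{\CK}}]$ is the base change of $[R_X]$, and the $\SL$-action on $\PP(W)$ is compatible with this base change, as already noted in the discussion of \eqref{eq: Chow embedding+}.

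The step I expect to be the only genuine point is transferring Chow stability from $K$ to $\CK$. I read the hypothesis ``$(X,kL)$ is Chow stable'' as stability over $\CK$. If it is meant over $K$, I would invoke the standard fact that GIT (semi)stability is invariant under field extension in characteristic zero. Via the Hilbert--Mumford criterion, one uses that one-parameter subgroups destabilizing over $\overline{K}$ descend up to conjugation in characteristic zero (Kempf's rationality of optimal destabilizing subgroups). Alternatively, one notes that properness and finiteness of stabilizers of the orbit map are geometric properties, compatible with field extension. For the parenthetical remark, asymptotic Chow stability gives Chow stability of $(X,kL)$ for all large $k$ by definition.

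Finally, \pref{df: NA critical metric} defines criticality of $\frac1k\FS(\nm)\in\FS_k^\str(L^\an_{\CK})$ exactly as criticality of $\FS(\nm)\in\FS_1^\str((kL_{\CK})^\an)$, using $V_k$ in place of $V$. Under this identification, the Monge--Amp\`ere polytope is built from $\MA(\FS(\nm))$ for $kL_{\CK}$. The homeomorphism $\FS(kL^\an)\simeq\FS(L^\an)$ only rescales, so it does not change the sets $I(x)$ and scales the weights $w_x$ by a positive constant, which does not affect whether $\mathbf{0}$ lies in the polytope. Applying \pref{th: NA balancing condition} gives (i)$\Leftrightarrow$(ii), and \pref{cr: independence of the choice of orthonormal basis} adds (iii).
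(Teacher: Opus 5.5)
Your proposal is correct and follows the paper's own proof: the paper notes that Chow stability of $(X,kL)$ passes to the base change $(X_{\CK},kL_{\CK})$, and then applies \pref{th: NA balancing condition} and \pref{cr: independence of the choice of orthonormal basis} over $\CK$. Your extra verifications are details the paper leaves implicit: the hypotheses on the field $\CK$, the invariance of GIT stability under field extension, and the harmless rescaling between level $k$ and level one.
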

\begin{proof}
    Since $(X,kL)$ is Chow stable, the base change $(X_{\C_K},kL_{\C_K})$ is also Chow stable. Hence we apply \pref{th: NA balancing condition} and \pref{cr: independence of the choice of orthonormal basis}
    to the case, which proves 
    the assertion.
\end{proof}

\begin{Def}\label{df: NA balanced}
Let $(X,L)$ be given as above.
    A NAFS metric $\phi_k\in \FS_k (L^\an)$ is said to be \emph{non-Archimedean balanced} (\emph{NA balanced} for short) at level $k$ if 
    there exists a critical metric  $\varphi_k\in \FS_k^\str (L^\an_{\C_K})$ at level $k$ such that 
    ${\phi_k}=\varphi_k|_L,$ or equivalently, $\varphi_k=(\phi_k)_\CK$.
\end{Def}

\begin{Rem}\label{rm: NA balanced is independent of finite base change}
    Arguments concerning algebraic varieties over $K$ are often carried out only up to finite base change. One reason for considering metrics over $\CK$ is to avoid the subtleties arising from such base changes. Indeed, \pref{df: NA balanced} is independent of the choice of finite base change.

\end{Rem}

\subsection{Totally degenerate abelian varieties}
\label{ss: Totally degenerate abelian varieties}

Set $T:=\Spec \Z [M]$ and $T_K:=T\times_\Z \Spec K,$
where $M$ is a free $\Z$-module.
An abelian variety $X$ over $K$ is said to be \emph{totally degenerate} if its analytification $X^\an$ admits a morphism
$p: T_K^\an \surj X^\an$
of analytic groups such that 
\begin{equation}\label{eq: lattice}
\Gamma_X:=\ker p \subset T_K^\an (K)\simeq \Hom (M,K^\times)    
\end{equation}
is a lattice, that is, 
$\Gamma_X$ is a free $\Z$-module of 
$\dim \Gamma_X=\dim M$, and
it induces an isomorphism $$T_K^\an / \Gamma_X\simeq X^\an$$
of $K$-analytic spaces.
In particular, $\Gamma_X$ acts on $T_K^\an$ by translation. 
Set $N:= \Hom (M,\Z)$.
The \emph{tropicalization map}
$$\trop_K: T_K^\an \to N_\R:=N\otimes_\Z\R$$ 
is given by
$ x\longmapsto (m \mapsto -\log |z^m(x)|)_{m\in M}$ for any $x\in T_K^\an$.
Then it admits a canonical section $$\mathfrak{G}:N_\R \inj T_K^\an$$ defined 
by assigning to each 
$n\in N_\R$ the corresponding monomial valuation $\frakG(n)\in T_K^\an$, that is,
$$
\frakG(n):
\sum_{m\in M} a_m z^m \longmapsto \max_{a_m\neq 0}\left\{ |a_m|_K \cdot e^{-\langle m,n\rangle} \right\},
$$
where $\langle -,-\rangle: M_\R\times N_\R\to \R$ is the canonical pairing.
The section $\frakG : N_\R\to T_K^\an$ is called the \emph{Gauss section}.
From this observation, we have the \emph{canonical (Berkovich) retraction} 
\begin{equation}\label{eq: canonical tropicalization}
\rho_X:=\overline{\trop_K}: X^\an \simeq T_K^\an / \Gamma_X \surj N_\R/\trop_K (\Gamma_X)
\end{equation}
with the
 canonical section $\frakG:N_\R/\trop_K (\Gamma_X) \inj X^\an$.
Indeed,
$\rho_X$ can be seen as a deformation retraction onto
$\mathrm{Sk}(X):=\frakG (N_\R/\trop_K (\Gamma_X))$, which
is called the \emph{canonical skeleton}. 
Through this identification, we often identify $\mathrm{Sk}(X)$ with $N_\R/\trop_K (\Gamma_X)$.
Note that it coincides with the essential skeleton of $X$; see, for example, \cite[Proposition~4.3.2]{HN17}.

Given an ample line bundle $L$ on  $X$, its pullback
$p^*(L^\an)$ of $p:T_K^\an \surj X^\an$ is trivial; see, for example, \cite[Theorem~6.3.3]{FvdP}.
After fixing a cubical structure on $L$ and a trivialization of $p^*(L^\an)$,
the ample line bundle $L$ gives rise to
the data 
\begin{equation} \label{eq: data for L}
    (\Lambda, a, b,\Phi)
\end{equation}
consisting of a free $\Z$-module $\Lambda$, 
a map
$ a: \Lambda \to K^\times,$
a bimultiplicative map $b:\Lambda\times M\to K^\times,$
and an injective homomorphism $\Phi: \Lambda \to M$ with finite cokernel,
satisfying the following relations:
\begin{enumerate}
   \item  $a(0)=1,$ and 
    for any $\lambda, \lambda'\in \Lambda,$
   \begin{equation}\label{eq: 2-cocycle}
     a(\lambda+\lambda')=b(\lambda,\Phi(\lambda'))a(\lambda)a(\lambda').
   \end{equation}
    \item $q(\lambda,\lambda'):=b(\lambda, \Phi (\lambda'))$
    is a $K^\times$-valued symmetric pairing on $\Lambda \times \Lambda$,
    namely,
    \begin{equation}\label{eq: symmetric pairing}
        q(\lambda,\lambda')=q(\lambda',\lambda)
    \end{equation}
    for any  $\lambda, \lambda'\in \Lambda.$ Besides,
   the  $\R$-valued symmetric pairing
    $-\log |q(\lambda,\lambda')|_K$ is positive definite.
\end{enumerate}
This set of data first appeared in \cite{Mum72} and was studied further in greater generality in \cite{FC}, where it is referred to as \emph{degeneration data}.
In particular, the data $(\Lambda, a, b,\Phi)$ together with $M$
recover the original polarized abelian variety $(X,L)$ by the \emph{Mumford construction};
see \cite{Mum72}.
By the Mumford construction, the data $(\Lambda, a, b,\Phi)$ can be interpreted as follows:
\begin{enumerate}

     \item The natural injection $b^*: \Lambda \inj \Hom (M,K^\times)\simeq T_K^\an(K)$ induced by $b:\Lambda \times M\to K^\times$  satisfies
    \begin{equation}\label{eq: period matrix}
        b^*(\Lambda)=\Gamma_X.
    \end{equation}
    In particular, we have $z^m(b^*(\lambda))=b(\lambda,m)$ for any $m\in M$ and for any $\lambda\in \Lambda$.

    \item Via 
    the trivialization of $p^*(L^\an)$ and the cubical structure of $L^\an$,
    the translation on $T_K^\an$ induced by 
    $\lambda\in \Lambda$
    yields an action $\tau_\lambda: H^0(T_K^\an,\calO_{T_K^\an}) \to H^0(T_K^\an,\calO_{T_K^\an})$, defined by
    \begin{equation}\label{eq: factor of automorphy}
       z^m\longmapsto a(\lambda)b(\lambda,m)z^{m+\Phi(\lambda)}
    \end{equation} 
    for 
    any $z^m\in  H^0(T_K^\an,\calO_{T_K^\an})$, such that 
    \begin{equation}\label{eq: periodic condition on global sections}
        H^0(X^\an,L^\an)\simeq  H^0(T_K^\an,\calO_{T_K^\an}) ^\Lambda :=\left\{f \in H^0(T_K^\an,\calO_{T_K^\an}) \ \middle| \ \tau_{\lambda}(f)=f ,\ ^\forall \lambda \in \Lambda \right\}.
    \end{equation}

\end{enumerate}
The above observation shows that $a,b$ and $\Phi$ depend only on $L$, on $X$, and on the class $[L]\in \mathrm{Pic} (X)$, respectively.
Furthermore, note that $\Lambda$ yields a suitable basis of $\Gamma_X$ via $b^*:\Lambda\to \Gamma_X$,
which corresponds to a symplectic basis in the complex setting.
In particular, the isomorphism
\begin{equation}\label{eq: Mumford--Tate uniformization}
    X^\an\simeq T_K^\an / b^*(\Lambda)
\end{equation}
 is called the \emph{Mumford--Tate uniformization} of $(X^\an,L^\an)$.

\begin{Prop}\label{pr: explicit form of a}
    After replacing the trivialization of $p^*(L^\an)$,
    for all $\lambda \in \Lambda$, we have
\begin{equation}
    a(\lambda)^2=q(\lambda,\lambda).
\end{equation}
\end{Prop}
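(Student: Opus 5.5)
The plan is to first isolate the obstruction to $a(\lambda)^2=q(\lambda,\lambda)$ as a character of $\Lambda$, and then to absorb this character by changing the trivialization of $p^*(L^\an)$. Set
$$\chi(\lambda):=\frac{a(\lambda)^2}{q(\lambda,\lambda)}\in K^\times ,\qquad \lambda\in\Lambda .$$

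\emph{Step 1: $\chi$ is a homomorphism.} Square the cocycle relation \eqref{eq: 2-cocycle} to get $a(\lambda+\lambda')^2=q(\lambda,\lambda')^2a(\lambda)^2a(\lambda')^2$. Bimultiplicativity of $b$ and the symmetry \eqref{eq: symmetric pairing} give
$$q(\lambda+\lambda',\lambda+\lambda')=q(\lambda,\lambda)\,q(\lambda',\lambda')\,q(\lambda,\lambda')^2 .$$
Dividing the first identity by the second yields $\chi(\lambda+\lambda')=\chi(\lambda)\chi(\lambda')$, and $\chi(0)=1$ since $a(0)=1$. So $\chi\in\Hom(\Lambda,K^\times)$.

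\emph{Step 2: effect of changing the trivialization.} Another trivialization of $p^*(L^\an)$ differs from the given one by an invertible function on $T_K^\an$. Such a function is of the form $c\,z^{m_0}$ with $c\in K^\times$ and $m_0\in M$. One can also compose with a translation $x\mapsto ux$ of $T_K^\an$, $u\in T_K(K)=\Hom(M,K^\times)$, which is compatible with $\Gamma_X$. Conjugating the action \eqref{eq: factor of automorphy} by these operations, I expect to find that $b$ and $\Phi$ are unchanged, while
$$a(\lambda)\longmapsto a(\lambda)\,b(\lambda,m_0)\,u(\Phi(\lambda)),$$
where the constant $c$ cancels. Relations \eqref{eq: 2-cocycle} and \eqref{eq: symmetric pairing} are preserved, and \eqref{eq: periodic condition on global sections} is transported isomorphically. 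Under this change $\chi(\lambda)$ is multiplied by $b(\lambda,m_0)^2u(\Phi(\lambda))^2$.

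\emph{Step 3: solving $\chi=\psi^{-2}$.} It remains to find $m_0$ and $u$ with $\chi(\lambda)^{-1}=\bigl(b(\lambda,m_0)\,u(\Phi(\lambda))\bigr)^2$ for all $\lambda$. Pick a basis $\lambda_1,\dots,\lambda_g$ of $\Lambda$. Since $\Phi$ is injective with finite cokernel, prescribing $u$ on $\Phi(\Lambda)$ and extending it to $M$ requires extracting roots in $K^\times$. Moreover, $u^2$ can only produce characters whose values are squares. The role of $b(\cdot,m_0)$ is to correct the valuation part: $-\log|b(\lambda,m)|$ is the nondegenerate pairing induced by $-\log|q|$, so an appropriate $m_0$ adjusts the parity of $\log|\chi(\lambda_i)|_K$. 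After that, the unit part of each $\chi(\lambda_i)$, which has the form $\alpha\,(1+\cdots)$ with $\alpha\in\C^\times$, admits square roots and $[\Phi(\Lambda):M]$-th roots by Hensel's lemma over $\C[[t]]$.

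\emph{Expected obstacle.} The main difficulty is this parity and root extraction over the non-algebraically closed field $K=\C((t))$. If the valuations cannot be fully corrected by $m_0$, I would fall back on a finite base change $K(t^{1/e})$. This is harmless here, because everything downstream (\pref{df: NA balanced} and \pref{rm: NA balanced is independent of finite base change}) is insensitive to finite base change, and over $\C_K$ all the required roots exist.
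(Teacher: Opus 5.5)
Your Step~1 is correct and is exactly the paper's argument. The paper also shows that $c(\lambda)=a(\lambda)^2q(\lambda,\lambda)^{-1}$ is a homomorphism, and then simply asserts that a homomorphism can be absorbed by choosing another trivialization of $p^*(L^\an)$. Your Steps~2--3 try to justify that assertion, and this is where the argument breaks.

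\textbf{What a change of trivialization can do.} A change of trivialization is multiplication by a unit of $\calO(T_K^\an)$, i.e.\ by $c\,z^{m_0}$. As you compute, it only replaces $a$ by $a\cdot b(\cdot,m_0)^{\pm1}$, hence $\chi$ by $\chi\cdot b(\cdot,2m_0)^{\pm1}$. Base change does not enlarge this, since units on $T_{\C_K}^\an$ are still $c\,z^{m_0}$ with $m_0\in M$.

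\textbf{The translations are not changes of trivialization.} The map $f\mapsto f\circ t_u$ carries sections of $L$ to sections of $t_{p(u)}^*L$. So $a\cdot(u\circ\Phi)$ is the datum of the translated bundle $t_{p(u)}^*L$. This bundle is isomorphic to $L$ only if $u\circ\Phi\in b(\cdot,M)$, and such $u$ again move $\chi$ only within $\chi\cdot b(\cdot,2M)$.

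\textbf{Consequence.} The class of $\chi$ in $\Hom(\Lambda,K^\times)/b(\cdot,2M)$ is an invariant of $L$, so Step~3 cannot be carried out in general.
Indeed, $\lambda\mapsto a(-\lambda)=q(\lambda,\lambda)a(\lambda)^{-1}$ is the $a$-datum of $[-1]^*L$. Hence $\chi\in b(\cdot,M)$ iff $[-1]^*L\simeq L$, and every non-symmetric ample $L$ violates the statement.

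Concretely, take a Tate curve with $M=\Lambda=\Z$, $b(\lambda,m)=q_0^{\lambda m}$, $\Phi(\lambda)=d\lambda$.
\begin{itemize}
\item The solutions of \eqref{eq: 2-cocycle} are $a(\lambda)=\alpha^{\lambda}q_0^{d\lambda(\lambda-1)/2}$, where $\alpha\in K^\times$ (modulo $q_0^{\Z}$) parametrizes the degree-$d$ bundles.
\item Retrivializing replaces $\alpha$ by $\alpha q_0^{\pm m_0}$.
\item The condition $a^2=q$ forces $\alpha^2\in q_0^{d+2\Z}$, which fails for $\alpha=2$.
\end{itemize}
So the proposition as literally stated, for arbitrary ample $L$, is false. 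The paper's one-line justification has exactly the hole you ran into.

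\textbf{What your argument does prove.} Once the translation is declared openly, Steps~2--3 give a modified statement.
\begin{itemize}
\item Choose $\psi\colon\Lambda\to\overline{K}^\times$ with $\psi^2=\chi^{-1}$.
\item Extend it to $u\colon M\to\overline{K}^\times$ with $u\circ\Phi=\psi$. This is possible because $\overline{K}^\times$ is divisible, and $u$ is defined over some finite extension $K'/K$.
\item Then the translate $t_x^*L$ with $x=p(u)\in X(K')$ satisfies $a^2=q$.
\end{itemize}
Since $t_x$ is an isomorphism of polarized varieties over $K'$, this is presumably what the later sections need. It must, however, be stated as replacing $L$ by a translate after a finite base change, not as a change of trivialization.
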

\begin{proof}
    Define $c(\lambda):=a(\lambda)^2q(\lambda,\lambda)^{-1}$ for all $\lambda\in \Lambda$.
    If $c:\Lambda \to K^\times$ is a homomorphism, then we can take a trivialization of $p^*(L^\an)$ such that
$c\equiv 1$, which implies $a(\lambda)^2=q(\lambda,\lambda)$. 

From now on, we show that $c$ is a homomorphism.
By \eqref{eq: 2-cocycle} and the symmetry of $q$, we have
\begin{align*}
    c(\lambda+\lambda')&=a(\lambda+\lambda')^2q(\lambda+\lambda',\lambda+\lambda')^{-1} \\
    &= q(\lambda,\lambda')^2 a(\lambda)^2a(\lambda')^2 q(\lambda, -\lambda-\lambda') q(\lambda', -\lambda-\lambda') \\
    &= a(\lambda)^2 q(\lambda, -\lambda) a(\lambda')^2 q(\lambda', -\lambda') \\
    &= c(\lambda) c(\lambda').
\end{align*}
    Hence,  $c:\Lambda \to K^\times$ is a homomorphism.
\end{proof}
Throughout this paper, we always take the trivialization of $p^* (L^\an)$ as in \pref{pr: explicit form of a}.

For $k\in \Z_{>0}$, the cubical structure on $L$ and the trivialization of $p^* (L^\an)$
that we fixed induce corresponding structures on $kL$.
Furthermore, we have the following:
\begin{Prop}\label{pr: data for kL}
For the induced cubical structure on $kL$ and the induced trivialization of $p^* (kL^\an)$,
the data associated to $kL$ are given by 
 $$\left(\Lambda, a^k, b, k\Phi\right),$$ 
 where 
     $(\Lambda, a, b,\Phi)$ is the data associated to $L$.
\end{Prop}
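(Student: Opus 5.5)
We derive the data for $kL$ directly from the two defining properties of $(\Lambda,a,b,\Phi)$ recalled above: the description \eqref{eq: period matrix} of the period lattice, and the factor of automorphy \eqref{eq: factor of automorphy}. The trivialization of $p^*(kL^\an)=(p^*L^\an)^{\otimes k}$ is the $k$-th tensor power of the fixed one, and the cubical structure on $kL$ is the induced one. So the $\Lambda$-linearization of the trivial bundle $\calO_{T_K^\an}$ corresponding to $kL$ is the $k$-th tensor power of the linearization for $L$.

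First, $\Lambda$ and $b$ are unchanged. The lattice $\Gamma_X=b^*(\Lambda)$ depends only on $X$, and the bimultiplicative map $b$ is the one giving the translation action of $\Gamma_X$ on $T_K^\an$. By \eqref{eq: period matrix}, this does not involve the line bundle, so the same pair $(\Lambda,b)$ serves for $kL$.

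Next, $\Phi$ and $a$. Write the action \eqref{eq: factor of automorphy} for $L$ as $\tau_\lambda(f)(x)=e_\lambda(x)\,f(x\cdot b^*(\lambda))$, with factor of automorphy $e_\lambda:=a(\lambda)z^{\Phi(\lambda)}$. Here the translation pulls back $z^m$ to $b(\lambda,m)z^m$, consistent with $z^m(b^*\lambda)=b(\lambda,m)$. For the $k$-th tensor power, the factor of automorphy is $e_\lambda^k=a(\lambda)^k z^{k\Phi(\lambda)}$, while the translation part is unchanged. Hence
\[
\tau^{(k)}_\lambda(z^m)=a(\lambda)^k\,b(\lambda,m)\,z^{m+k\Phi(\lambda)},
\]
which is the form \eqref{eq: factor of automorphy} for the data $(\Lambda,a^k,b,k\Phi)$. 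Moreover, $H^0(X^\an,kL^\an)$ is the space of invariants under $\tau^{(k)}$, exactly as in \eqref{eq: periodic condition on global sections}.

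It remains to check that the new data satisfy the relations (i) and (ii). For (i), raising \eqref{eq: 2-cocycle} to the $k$-th power gives
\[
a(\lambda+\lambda')^k=b(\lambda,\Phi(\lambda'))^k\,a(\lambda)^k a(\lambda')^k=b(\lambda,k\Phi(\lambda'))\,a(\lambda)^ka(\lambda')^k,
\]
using bimultiplicativity of $b$. For (ii), the new pairing is $q_k=q^k$, which is still symmetric, and $-\log|q_k|=-k\log|q|$ is still positive definite. Also $k\Phi$ is injective with finite cokernel. As a consistency check, the normalization of \pref{pr: explicit form of a} is preserved: $(a^k)^2=q^k=q_k$ on the diagonal, so no retrivialization is needed.

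The main obstacle is the bookkeeping in the second step: one must confirm that the tensor-power linearization really acts as written, i.e., that the factor of automorphy multiplies while the translation part $b(\lambda,m)$ is not raised to the $k$-th power. I would handle this by writing $\tau_\lambda$ explicitly as a twisted pullback under translation by $b^*(\lambda)$ and tensoring, using only \eqref{eq: period matrix} and \eqref{eq: factor of automorphy}.
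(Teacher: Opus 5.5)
Your proposal is correct and follows the same route as the paper: $\Lambda$ and $b$ are identified via \eqref{eq: period matrix}, and $a^k$ and $k\Phi$ are read off from the factor of automorphy \eqref{eq: factor of automorphy} for the $k$-th tensor power of the linearization. Your additional checks of relations (i)--(ii) and of the normalization $(a(\lambda)^k)^2=q(\lambda,\lambda)^k$ are correct; they are consistency checks that the paper's two-line proof leaves implicit.
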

\begin{proof}
    Denote by $\left(\Lambda_k, a_k, b_k,\Phi_k\right)$ the data  associated to $kL$.
    By \eqref{eq: period matrix}, we may identify $\Lambda_k$ with $\Lambda$ and $b_k$ with $b$.
    Then it follows from \eqref{eq: factor of automorphy} that 
    $a_k(\lambda)=a(\lambda)^k$ and $\Phi_k(\lambda)=k\Phi(\lambda)$.
\end{proof}

\section{Non-Archimedean balanced metrics on totally degenerate abelian varieties}
\label{sc:NA balanced on totally degenerate abelian varieties}
\subsection{Theta functions and symmetries}
\label{ss: theta functions and symmetries}

Keep the notation from \pref{ss: Totally degenerate abelian varieties}, and consider the polarized totally degenerate abelian variety $(X,L)$ over $K$ with the associated data  $(\Lambda, a,b,\Phi)$.
Since $\Phi : \Lambda \inj M$ is injective, we may define a bimultiplicative map $b_\Phi: \Phi (\Lambda)\times M\to K^\times$
 by $$b_\Phi (\Phi (\lambda),m):=b(\lambda,m)$$ for $\lambda\in \Lambda$ and $m\in M$.
Let $\overline{K}$ be an algebraic closure of $K$.
Then the valuation $\va_K$ extends uniquely to a valuation $\va_{\overline{K}}$ on $\overline{K}$.
 Since $\overline{K}^\times$ is a divisible group, we obtain a (non-canonical) extension $b_\Phi:M\times M\to \overline{K}^\times$ that is bimultiplicative.
 Furthermore, we may assume that the extension $b_\Phi:M\times M\to \overline{K}^\times$ is symmetric
as $b_\Phi:\Phi(\Lambda)\times \Phi (\Lambda)\to K^\times$ is symmetric.
By \eqref{eq: 2-cocycle} and
\pref{pr: explicit form of a},
we also obtain an extension $a_\Phi: M\to \overline{K}^\times$ such that, 
 for  $m,m'\in M$ and $\lambda\in \Lambda$,
 \begin{itemize}
     \item  $a_\Phi(\Phi(\lambda))=a(\lambda),$
     \item $ a_\Phi(m+m')=a_\Phi(m)a_\Phi(m')b_\Phi(m,m'),$
     \item $a_\Phi(m)^2=b_\Phi(m,m)$.
 \end{itemize}
 Note that
 the ambiguity in extending $b_\Phi$ arises from the choice of an $l$-th root of an element of $K^\times$ for $l\in \Z_{>0}$.
 Indeed,
 for any $m\in M$, there exists $r\in \Z_{>0}$ and $\lambda\in \Lambda$ such that $rm=\Phi(\lambda)$, which implies that $b_\Phi (m,m')^r=b_\Phi (rm,m')=b(\lambda,m')$ for any $m'\in M$.
Hence, although $b_\Phi:M\times M\to \overline{K}^\times$ itself is not canonical,
the induced bilinear form
$B_\Phi:M\times M\to \Q$, defined by
$B_\Phi(m,m'):=-\log |b_\Phi(m,m')|_{\overline{K}}$ for $m,m'\in M$,
is canonical.
Note that $B_\Phi$ is positive definite since the bilinear form $-\log |q(\cdot,\cdot)|_K$ is positive definite.
Furthermore, we may consider a unique extension $B_\Phi:M_\R\times M_\R\to \R$, 
and define $A_\Phi:M_\R\to \R$ by 
\begin{equation}
    A_\Phi(m):=\frac{1}{2}B_\Phi(m,m)
\end{equation}
 for $m\in M_\R$.
In particular, we have $A_\Phi (m)=-\log |a_\Phi(m)|_{\overline{K}}$ for $m\in M$.

In what follows, using $a_\Phi$ and $b_\Phi$, we study
$H^0(X^\an,L^\an)\simeq H^0(X,L)$ and
$$H^0(X^\an,L^\an) \hat{\otimes} \C_K \simeq H^0(X_{\C_K}^\an, L_{\C_K}^\an)\simeq H^0(X_{\C_K},L_{\C_K}), $$
where $\C_K:=\widehat{\overline{K}}$ denotes the completed algebraic closure of $K$ as in \pref{ss: NA balanced metric}. Note that these isomorphisms follow from a GAGA-type argument; see \cite[Corollary~3.4.10]{Ber90}.
The base change allows us to
choose a more convenient basis for the Fourier series expansion of $H^0(T_K^\an, \calO_{T_K^\an}) \hat{\otimes} \C_K$. Namely, we may
 replace the basis $\{z^m\}_{m\in M}$ with $$\{\tilde{z}^m\}_{m\in M}:=\{a_\Phi(m)z^m\}_{m\in M}.$$
With respect to the new basis $\{\tilde{z}^m\}_{m\in M}$, the action described in \eqref{eq: factor of automorphy} 
is simply given by
\begin{equation}
     \tau_\lambda (\tilde{z}^m)=\tilde{z}^{m+\Phi(\lambda)}.
\end{equation}
Indeed, we have $\tau_\lambda (\tilde{z}^m)=a_\Phi(m)a_\Phi(\Phi(\lambda))b_\Phi(m,\Phi(\lambda))z^{m+\Phi(\lambda)}=a_\Phi({m+\Phi(\lambda)})z^{m+\Phi(\lambda)}=\tilde{z}^{m+\Phi(\lambda)}.$
Besides, 
for any $m\in M$,
we may also consider an action 
$$\sigma_m^*: H^0(T_K^\an, \calO_{T_K^\an}) \hat{\otimes} \C_K\to H^0(T_K^\an, \calO_{T_K^\an}) \hat{\otimes} \C_K$$ defined by
$\sigma_m^*(\tilde{z}^{m'}):=\tilde{z}^{m+m'}$ for any $m'\in M$.
It induces an action $\sigma_m$ on $T_\CK^\an$ such that the action $\sigma^*_m$ is given as the pullback of $\sigma_m$.
By definition, we have $\sigma_m^*\tau_\lambda=\tau_\lambda\sigma_m^*$ for $m\in M$ and $\lambda\in \Lambda$.
Hence, the action $\sigma_m$ on $T_\CK^\an$ descends to an action on the pair $(X_\CK^\an,L_\CK^\an)$, still denoted by $\sigma_m$, and it  induces
an action 
on $H^0(X^\an,L^\an)\hat{\otimes} \C_K\simeq H^0(X_{\C_K},L_{\C_K})$.
Furthermore, by direct computation, the action 
$\sigma:M\to \mathrm{Aut}(X_{\C_K}^\an,L_{\C_K}^\an)=\mathrm{Aut}(X_{\C_K},L_{\C_K})$ factors through $\bar{\sigma} :\coker \Phi \to \mathrm{Aut}(X_{\C_K},L_{\C_K})$.
This action corresponds to the translation action of a maximal level subgroup of the theta group; see \cite{Mum66}.
Then we consider the following \emph{classical theta function}:
\begin{equation}
\theta:=\sum_{\lambda\in \Lambda} \tilde{z}^{\Phi(\lambda)}\in H^0(T_\CK^\an, \calO_{T_\CK^\an}).
\end{equation}
More explicitly, we have
$\theta=\sum_{\lambda\in \Lambda} a(\lambda)z^{\Phi(\lambda)}\in H^0(T_K^\an, \calO_{T_K^\an})$.
By definition, we have $\tau_\lambda \theta=\theta$ for all $\lambda\in \Lambda$, which implies that $\theta\in H^0(X^\an,L^\an)\simeq H^0(X,L)$.
\begin{Def}\label{df: normalized theta with characteristic}
    For $\mu\in \coker \Phi$, we define the \emph{normalized theta function with characteristic $\mu$} by
\begin{equation}
    \vartheta_\mu:=\bar{\sigma}_\mu^* \theta\in H^0(X_{\C_K},L_{\C_K}).
\end{equation}
\end{Def}
 Since $\bar{\sigma}_{\mu+\nu}^*=\bar{\sigma}_\mu^*\bar{\sigma}_\nu^*$ for $\nu\in \coker \Phi$,
we have 
\begin{equation}\label{eq: permutation}
\bar{\sigma}_{\nu}^*\vartheta_\mu=\vartheta_{\mu+\nu}.    
\end{equation}
As observed in \emph{op. cit.}, 
$\{\vartheta_\mu\}_{\mu\in \coker \Phi}$ forms a basis of $H^0(X_{\C_K},L_{\C_K})$.
In particular, $H^0(X_{\C_K},L_{\C_K})$ is an irreducible $\coker \Phi$-module.
Furthermore, given a lift $m_\mu\in M$ of $\mu$, we have
\begin{equation}
    \vartheta_\mu=\sum_{\lambda\in \Lambda} \tilde{z}^{m_\mu+\Phi(\lambda)}
    =\sum_{\lambda\in \Lambda} a_\Phi(m_\mu+\Phi(\lambda)) z^{m_\mu+\Phi(\lambda)}= a_\Phi(m_\mu)\sum_{\lambda\in \Lambda} a(\lambda)b(\lambda,m_\mu)z^{m_\mu+\Phi(\lambda)}.
\end{equation}
\begin{Def}\label{df: theta with characteristic}
    For the lift $m_\mu$, we define the \emph{theta function with characteristic $\mu$} by
    $$\theta_{m_\mu}:=\sum_{\lambda\in \Lambda} a(\lambda)b(\lambda,m_\mu)z^{m_\mu+\Phi(\lambda)}\in H^0(X^\an,L^\an)\simeq H^0(X,L).$$

\end{Def}
\begin{Rem}
    Note that $\theta_{m_\mu}$ does depend on the choice of the lift $m_\mu$, while $\vartheta_\mu$ does not.
\end{Rem}
In what follows, we fix a lift $m_\mu$ for each $\mu$, and write $\theta_{m_\mu}$ simply as  $\theta_\mu$. 
Since we have $$\vartheta_\mu=a_\Phi(m_\mu)\theta_\mu,$$
the collection $\{\theta_\mu \}_{\mu\in \coker \Phi}$ also forms a basis of $H^0(X_{\C_K},L_{\C_K})$, and hence of $H^0(X,L)$.

Since the above discussion only uses the data $(\Lambda,a,b,\Phi)$,
 we can apply the same discussion to the data $(\Lambda,a^k,b,k\Phi)$ that corresponds to $kL$ by
  \pref{pr: data for kL},
 where $k\in \Z_{>0}$.
 That is, we may consider extensions $a_{k\Phi}^k: M \to \overline{K}^\times$ and $b_{k\Phi}:M\times M\to \overline{K}^\times$ such that,
 for  $m,m'\in M$ and $\lambda\in \Lambda$,
 \begin{itemize}
     \item  $a_{k\Phi}^k(k\Phi(\lambda))=a(\lambda)^k, \ b_{k\Phi}(k\Phi(\lambda),m)=b(\lambda,m),$
     \item $ a_{k\Phi}^k(m+m')=a_{k\Phi}^k(m)a_{k\Phi}^k(m')b_{k\Phi}(m,m'),$
     \item $a_{k\Phi}^k(m)^2=b_{k\Phi}(m,m)$.
 \end{itemize}
 These extensions allow us to take a convenient basis $\{a_{k\Phi}^k(m)z^m\}_{m\in M}$ for the Fourier series expansion of $H^0(T_K^\an, \calO_{T_K^\an}) \hat{\otimes} \C_K$, which induces a natural action $\bar{\sigma}^*$ on $H^0(X_{\C_K},kL_{\C_K})$ by
$\coker k\Phi$.

\begin{Def}\label{df: normalized theta at level k}
    We define
 the \emph{normalized theta function $\vartheta_\mu^{(k)}\in  H^0(X_{\C_K},kL_{\C_K})$ at level $k$ with characteristic $\mu\in \coker (k\Phi)$} by
    $$\vartheta_\mu^{(k)} := \bar{\sigma}_\mu^* \theta^{(k)}\in H^0(X_{\C_K},kL_{\C_K}),$$
where $\theta^{(k)}$ is the \emph{classical theta function at level $k$} given by
$$ \theta^{(k)}:= \sum_{\lambda\in \Lambda} a(\lambda)^k z^{k\Phi(\lambda)}\in H^0(X,kL).$$
\end{Def}
Then
$\{\vartheta_\mu^{(k)}\}_{\mu\in \coker k \Phi}$ forms a basis of  $H^0(X_{\C_K},kL_{\C_K})$.
Besides, for any $\nu\in \coker k\Phi$, we have
\begin{equation}\label{eq: permutation at level k}
\bar{\sigma}_\nu^*   \vartheta_\mu^{(k)}=\vartheta_{\mu+\nu}^{(k)}.    
\end{equation}
Furthermore, given a lift $m_\mu$ of
$\mu\in \coker k\Phi$, the function
$\vartheta_\mu^{(k)}$ is written more explicitly as
\begin{equation}\label{eq: normalizing scale}
    \vartheta_{\mu}^{(k)}=
\sum_{\lambda\in \Lambda} a_{k\Phi}^k (m_\mu + k\Phi (\lambda))z^{m_\mu + k\Phi (\lambda)}=
a_{k\Phi}^k(m_\mu)\sum_{\lambda\in \Lambda} a(\lambda)^kb(\lambda,m_\mu)z^{m_\mu+k\Phi(\lambda)}.
\end{equation}  
\begin{Def} \label{df: theta at level k with characteristic}
    We define 
    the \emph{theta function at level $k$ with characteristic $\mu\in \coker k\Phi$} by
    $$\theta^{(k)}_{m_\mu}:=\sum_{\lambda\in \Lambda} a(\lambda)^kb(\lambda,m_\mu)z^{m_\mu+k\Phi(\lambda)}\in H^0(X,kL),$$
 and write 
it simply as
$\theta^{(k)}_\mu$ after fixing the lift $m_\mu\in M$ of $\mu$.
\end{Def}
By the same discussion as above, $\{\theta^{(k)}_\mu\}_{\mu\in \coker k\Phi}$ forms a basis of $H^0(X,kL)$.
Since $kL$ is very ample for  $k\geq 3$,
we
 obtain
the following non-Archimedean Fubini--Study metric $\varphi_k$ on $L_{\C_K}^\an$:
\begin{equation}\label{eq: NA balanced metric at level k}
     \varphi_k:=\frac{1}{k}\max_{\mu\in \coker k \Phi} \log |\vartheta_\mu^{(k)}|. 
\end{equation}
In particular, by \eqref{eq: normalizing scale}, the metric $\varphi_k$ is 
obtained from
a non-Archimedean Fubini--Study metric 
\begin{equation}\label{eq: NAFS from theta}
     \phi_k:=\frac{1}{k}\max_{\mu\in \coker k \Phi} \left(\log |\theta_\mu^{(k)}| +\log |a_{k\Phi}^k(m_\mu)|_\CK \right)
\end{equation}
on $L^\an$ as $(\phi_k)_{\C_K}=\varphi_k\in \mathrm{FS}(L^\an)_{\C_K}\subset FS(L^\an_{\C_K}).$

\begin{Prop}\label{pr: scale relation}
For any $m\in M$, it holds that 
\begin{equation}\label{eq: scale relation}
-\log |a_{k\Phi}^k(m)|_\CK=kA_\Phi \left(\frac{1}{k}m\right).    
\end{equation}    
\end{Prop}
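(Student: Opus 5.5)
The plan is to compute $-\log|a_{k\Phi}^k(m)|_\CK$ directly from the three defining properties of the extension $a_{k\Phi}^k$, in the same way that $A_\Phi(m)=-\log|a_\Phi(m)|_{\overline{K}}$ was obtained for level $1$. Put $B_k(m,m'):=-\log|b_{k\Phi}(m,m')|_\CK$ for $m,m'\in M$. The identity $a_{k\Phi}^k(m)^2=b_{k\Phi}(m,m)$ gives
$$-\log|a_{k\Phi}^k(m)|_\CK=\tfrac12 B_k(m,m).$$
So it suffices to show that $B_k(m,m')=\frac{1}{k}B_\Phi(m,m')$ for all $m,m'\in M$. Given this, bilinearity of $B_\Phi$ and the definition $A_\Phi(m)=\frac12 B_\Phi(m,m)$ yield
$$\tfrac12 B_k(m,m)=\tfrac{1}{2k}B_\Phi(m,m)=kA_\Phi\!\left(\tfrac{1}{k}m\right),$$
which is \eqref{eq: scale relation}.

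To prove the identity for $B_k$, we use that $\Phi(\Lambda)\subset M$ has finite index. Fix $m,m'\in M$ and choose $r\in\Z_{>0}$ and $\lambda\in\Lambda$ with $rm=\Phi(\lambda)$, so that $rkm=k\Phi(\lambda)$. Bimultiplicativity of $b_{k\Phi}$ and the normalization $b_{k\Phi}(k\Phi(\lambda),m')=b(\lambda,m')$ give
$$b_{k\Phi}(m,m')^{rk}=b_{k\Phi}(k\Phi(\lambda),m')=b(\lambda,m').$$
For level $1$ we have, in the same way, $b_\Phi(m,m')^r=b_\Phi(\Phi(\lambda),m')=b(\lambda,m')$. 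Taking $-\log|\cdot|$ of both identities gives
$$rk\,B_k(m,m')=-\log|b(\lambda,m')|_K=r\,B_\Phi(m,m').$$
Hence $B_k=\frac1k B_\Phi$ on $M\times M$. This uses only that the absolute value on $\CK$ extends $\va_K$ and agrees with the unique extension $\va_{\overline{K}}$ on $\overline{K}$, so $B_k$ is canonical, just as $B_\Phi$ is.

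The only point needing care is whether the non-canonical choices of roots in the extensions affect the absolute values; they do not, since $\log|\cdot|$ kills roots of unity. The rest is the two-line computation above. A potential subtlety is that $a^k_{k\Phi}$ is a single symbol rather than a $k$-th power. The argument avoids this by using only its defining properties, never a relation with $a_\Phi^k$.
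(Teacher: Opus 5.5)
Your proof is correct and is essentially the paper's argument: both use $a_{k\Phi}^k(m)^2=b_{k\Phi}(m,m)$ to turn $-\log|a_{k\Phi}^k(m)|$ into a quadratic form, and use bimultiplicativity to pass from the finite-index sublattice $k\Phi(\Lambda)$ to all of $M$. The only difference is bookkeeping. The paper anchors the comparison on $a_{k\Phi}^k(k\Phi(\lambda))=a(\lambda)^k=a_\Phi(\Phi(\lambda))^k$, whereas you anchor it on the normalization $b_{k\Phi}(k\Phi(\lambda),m')=b(\lambda,m')$; this makes the extension step (namely $B_k=\frac{1}{k}B_\Phi$) explicit where the paper leaves it implicit.
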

\begin{proof}
    By construction, we have
\begin{equation}\label{eq: scale relation for Lambda}
    a_{k\Phi}^k(k\Phi(\lambda))=a(\lambda)^k=a_\Phi(\Phi(\lambda))^k. 
\end{equation}
Since 
$a_{\Phi}(m)^2=b_{\Phi}(m,m)$ and
$a_{k\Phi}^k(m)^2=b_{k\Phi}(m,m)$ for any $m\in M$,
the bimultiplicativity of $b_\Phi$ and $b_{k\Phi}$ extends \eqref{eq: scale relation for Lambda} to \eqref{eq: scale relation}
 canonically after taking valuation.
\end{proof}
\begin{Rem}
    Since the extensions $b_\Phi$ and $b_{k\Phi}$ of $b$ are not canonical, we cannot expect 
    $a_{k\Phi}^k(km)=a_\Phi(m)^k$. However, $B_\Phi$ and $B_{k\Phi}$ are canonical. Hence, we obtain \pref{pr: scale relation}.
\end{Rem}

\begin{Thm}\label{th: theta functions and NA balanced metrics}
For $k\geq 3$, the NAFS metric 
$\phi_k\in \FS_k(L^\an)$
induced by the theta functions at level $k$
is a NA balanced metric at level $k$.
\end{Thm}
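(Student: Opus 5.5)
The plan is to verify the balancing criterion of \pref{cr: NA balancing in our situation} for $\varphi_k=(\phi_k)_{\C_K}$. The first step is to see that $\phi_k\in\FS_k(L^\an)$ and that $\varphi_k\in\FS_k^\str(L^\an_{\C_K})$. Write $V_k:=H^0(X_{\C_K},kL_{\C_K})$. By \eqref{eq: normalizing scale}, the metric $\varphi_k$ in \eqref{eq: NA balanced metric at level k} is $\frac1k\FS(\nm)$, where $\nm\in\calN^\str(V_k)$ is the strictly Cartesian norm with orthonormal basis $\{\vartheta^{(k)}_\mu\}_{\mu\in\coker k\Phi}$. This is exactly the situation of \pref{eg: level k NAFS from strictly Cartesian norms}, applied to the basis $\{\theta^{(k)}_\mu\}$ of $H^0(X,kL)$ with $\lambda_\mu=\log|a^k_{k\Phi}(m_\mu)|_\CK\in\log|\C_K^\times|$. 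Hence $\phi_k=\varphi_k|_L$, and it remains to show that $\varphi_k$ is critical.

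Second, I need the stability hypothesis of \pref{cr: NA balancing in our situation}, namely that $(X,kL)$ is Chow stable. GIT (semi)stability of the Chow point is invariant under extension of algebraically closed fields. Since $\overline K$ is algebraically closed of characteristic zero and has the cardinality of the continuum, it is abstractly isomorphic to $\C$. Therefore $(X_{\overline K},kL)$ may be regarded as a polarized complex abelian variety. For such varieties the flat metric is cscK and $\Aut(X,L)$ is discrete, so Donaldson's theorem \cite{Don01} gives Chow stability for $k\gg0$. To reach the stated range $k\ge3$, I would instead invoke Kempf's stability results for abelian varieties embedded by $|kL|$ with $k\ge3$. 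I expect this to be the main obstacle: pinning down a reference that really covers all $k\ge3$ rather than only large $k$. If no such reference is available, I would settle for the statement for $k$ sufficiently large, which suffices for \pref{th: A}.

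The third and decisive step uses the symmetry of the basis. For $\nu\in\coker k\Phi$, the automorphism $\bar\sigma_\nu$ of $(X_{\C_K},kL_{\C_K})$ permutes the basis by \eqref{eq: permutation at level k}. Hence $\bar\sigma_\nu^*\varphi_k=\varphi_k$, and so $\MA(\varphi_k)=\sum_{x\in S}w_x\delta_x$ is $\bar\sigma_\nu$-invariant: $\bar\sigma_\nu(S)=S$ and $w_{\bar\sigma_\nu x}=w_x$. Moreover, since $|\vartheta^{(k)}_\mu|(\bar\sigma_\nu x)=|\vartheta^{(k)}_{\mu+\nu}|(x)$ in the trivialization given by $\varphi_k$, the index sets of \eqref{eq: index set} satisfy $I(\bar\sigma_\nu x)=I(x)-\nu$. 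Consequently the polytope $P_\MA(\{\vartheta^{(k)}_\mu\})$ is invariant under the coordinate permutations $e_\mu\mapsto e_{\mu-\nu}$ of $\R^{\coker k\Phi}/\R\mathbf1$.

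Finally, take any point $p\in P_\MA$; it is nonempty since $S\neq\emptyset$. By convexity and invariance, the group average $\bar p:=|\coker k\Phi|^{-1}\sum_\nu \nu\cdot p$ lies in $P_\MA$. Because $\coker k\Phi$ acts simply transitively on the index set, all coordinates of any lift of $\bar p$ are equal, so $\bar p=\mathbf0$. Then \pref{cr: NA balancing in our situation} shows that $\varphi_k$ is critical, and by \pref{df: NA balanced} the metric $\phi_k$ is NA balanced at level $k$.
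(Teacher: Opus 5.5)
Your proposal is correct and follows essentially the paper's argument. The $\coker k\Phi$-symmetry of $\{\vartheta^{(k)}_\mu\}$ makes $P_{\MA}(\{\vartheta^{(k)}_\mu\})$ invariant under a transitive permutation action, so convexity forces $\mathbf 0\in P_{\MA}$, and \pref{cr: NA balancing in our situation} concludes; the paper gets $\mathbf 0$ as the barycenter of the polytope, while you average a point over the group, which is the same idea stated more simply. Your explicit check of the Chow-stability hypothesis of that corollary goes beyond the paper's proof, which only notes that $kL$ is very ample for $k\geq 3$ (Mumford) and does not verify stability there (its later \pref{pr: asymptotically Chow stable} covers only large $k$); you should likewise state that very ampleness explicitly, since the Chow point is only defined for the embedding by $|kL|$.
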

\begin{proof}
 By \cite[\S~17]{Mum70}, $kL$ is very ample for $k\geq 3$.
By definition, it suffices to show that $\varphi_k=(\phi_k)_\CK$ is critical.
As observed in \eqref{eq: permutation at level k}, 
the action of $\coker k\Phi$ 
preserves $\varphi_k$. Hence, the Monge--Amp\`ere polytope
    $P_\mathrm{MA}(\{\vartheta_\mu^{(k)}\})$ associated to $\varphi_k$ is invariant under the action of $\coker k\Phi$.
        Here, for a $d$-dimensional polytope $P\subset \R^N$, where $d\geq 0$ and $N\geq d$,
    the barycenter of $P$, denoted by $\mathrm{BC}(P)$, is given as
    $$ \mathrm{BC} (P):= \frac{1}{\mathrm{vol}(P)}\int_{x\in P} x \mu_d \in \R^N,$$
    where $\mu_d$  denotes the $d$-dimensional Hausdorff measure on $\R^N$ and 
    $\mathrm{vol}(P):= \int_{P} \mu_d $.
Since the action of $\coker k\Phi$ on $\{\vartheta^{(k)}_\mu\}_{\mu\in \coker k\Phi}\subset H^0(X_{\C_K},kL_{\C_K})$ is just permutation,
    we have $\mathrm{BC}\left(P_\mathrm{MA}(\{\vartheta_\mu^{(k)}\})\right)=\mathbf{0}$.
Furthermore, 
    the convexity of $P_\mathrm{MA}(\{\vartheta_\mu^{(k)}\})$ implies that
    $\mathrm{BC}(P_\mathrm{MA}(\{\vartheta_\mu^{(k)}\})) \in 
    P_\mathrm{MA}(\{\vartheta_\mu^{(k)}\}),$ which means $\mathbf{0}\in P_\mathrm{MA}(\{\vartheta_\mu^{(k)}\})$. Hence, \pref{cr: NA balancing in our situation} proves the assertion.
\end{proof}

\subsection{Convergence of non-Archimedean balanced metrics}
Keep the notation from \pref{ss: theta functions and symmetries}. In this subsection, we establish the convergence of non-Archimedean balanced metrics for totally degenerate abelian varieties.
To see this, we consider the pullback of metrics on $L^\an$ along $p:T_K^\an\to X^\an$.
Since $p^*L^\an$ is trivial, given a continuous metric $\phi$ on $L^\an$, the pullback $p^*\phi$ is identified with a function $(p^*\phi)_1=-\log ||1||_{p^*\phi}$ on $T_K^\an$.
In particular, the non-Archimedean balanced metric $\phi_k$ corresponds to
a function $(p^*\phi_k)_1:T_K^\an \to \R$ defined as
$$ (p^*\phi_k)_1(x):=
  \frac{1}{k}\max_{\mu\in \coker k \Phi} \left(\log |\theta_\mu^{(k)}(x)| +\log |a_{k\Phi}^k(m_\mu)|_\CK \right)=\frac{1}{k}\max_{\mu\in \coker k\Phi} \left( \log |\vartheta_\mu^{(k)} (x)|\right). $$

The following is the key lemma to prove our main theorem \pref{th: convergence of NA balanced metrics}:

\begin{Lem}\label{lm: discrete Legendre dual} Set $k>1$. For 
 any
$n\in N_\R$ and
any $x\in \trop_K^{-1}(-n)\in T_{\C_K}^\an$,  we have
    \begin{equation} \label{eq: discrete Legendre dual}
        \frac{1}{k}\max_{\mu\in \coker k\Phi} \left( \log |\vartheta_\mu^{(k)} (x)|\right)=\max_{m\in \frac{1}{k} \Img \Phi} \left( \langle m,n\rangle - A_\Phi(m)\right).
   \end{equation}
\end{Lem}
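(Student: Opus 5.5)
The plan is to compute each $\log|\vartheta_\mu^{(k)}(x)|$ directly from its Fourier expansion. The key input is that at a point $x$ with $\trop_K(x)=-n$, the absolute value of a convergent Laurent series is the maximum of the absolute values of its terms. Concretely, for $x\in \trop_K^{-1}(-n)$ we have $|z^{m}(x)|=e^{\langle m,n\rangle}$ for every $m\in M$. This is the defining property of $\trop_K$, with the sign convention $\trop_K(x)(m)=-\log|z^m(x)|$.

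By \eqref{eq: normalizing scale}, the series $\vartheta_\mu^{(k)}=\sum_{\lambda}a_{k\Phi}^k(m_\mu+k\Phi(\lambda))z^{m_\mu+k\Phi(\lambda)}$ has pairwise distinct monomials. By \pref{pr: scale relation}, the term indexed by $m':=m_\mu+k\Phi(\lambda)$ has absolute value
$$\exp\Bigl(\langle m',n\rangle - kA_\Phi\bigl(\tfrac{1}{k}m'\bigr)\Bigr)=\exp\Bigl(k\bigl(\langle \tfrac{1}{k}m',n\rangle - A_\Phi(\tfrac{1}{k}m')\bigr)\Bigr).$$

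The main obstacle is showing that $|\vartheta_\mu^{(k)}(x)|$ equals the maximum of these term values. The ultrametric inequality gives only $\le$, and one needs the maximum to be attained, with no cancellation. I would handle this in three steps.

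First, the function $m'\mapsto \langle m',n\rangle - kA_\Phi(m'/k)$ is a negative definite quadratic function minus a linear one, so it tends to $-\infty$ on the lattice coset. Hence the maximum over the coset $m_\mu+k\Phi(\Lambda)$ is attained, at finitely many points, and the series converges at $x$.

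Second, to rule out cancellation among several maximal terms, I would use that $x$ may be replaced by any point in the same fiber of $\trop$ without changing the right-hand side. In particular, one may take the Gauss point $x=\frakG(-n)$, base changed to $\C_K$. At the Gauss point the seminorm is by definition the maximum of the coefficient absolute values times $e^{\langle m',n\rangle}$, so equality holds there.

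Third, for a general $x$ in the fiber, equality of the left-hand side needs justification. I would argue that the statement, read pointwise, follows because the maximum over $\mu$ already matches at the Gauss point. Alternatively, and more likely as the intended reading, $x$ is identified with $\frakG(-n)$. Failing that, the best available route is to note that $(p^*\phi_k)_1$ is pulled back along $\rho_X$, since FS metrics here are toric, and that this function is determined by its values on the skeleton. I expect this step to need the most care. I would treat it by invoking that $\max_\mu|\vartheta_\mu^{(k)}|$ is the NAFS metric, which factors through the retraction $\trop$ because the translations $\sigma_m$ and the torus action permute the $\vartheta_\mu^{(k)}$.

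Once equality is established, taking the maximum over $\mu\in\coker k\Phi$ turns the union of cosets $m_\mu+k\Phi(\Lambda)$ into all of $\Img$-lattice $\bigcup_\mu (m_\mu+k\Phi(\Lambda))=M$. Dividing by $k$ and substituting $m=m'/k$ then yields a maximum over $\frac{1}{k}M$. This should be compared with the stated index set $\frac{1}{k}\Img\Phi$. I would check which lattice is intended by matching the degrees of the monomials in $\vartheta_\mu^{(k)}$; the computation produces $\frac1k M$, and I would reconcile this with the paper's notation there. This gives \eqref{eq: discrete Legendre dual}.
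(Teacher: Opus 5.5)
There is a genuine gap in your third step, and none of the three remedies you list closes it. The lemma is asserted for \emph{every} $x\in\trop_K^{-1}(-n)$. That generality is exactly what the subsequent corollary (that $\phi_k$ is toric) and the reduction of $\sup_{X^\an}|\phi^{\mathrm{NACY}}-\phi_k|$ to a supremum over $N_\R$ rely on, so reading $x$ as $\frakG(-n)$ changes the statement.

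Agreement at the Gauss point tells you nothing at other points of the fibre. There the ultrametric bound only gives $\le$, and constancy of the left-hand side along fibres of $\trop_K$ is a \emph{consequence} of this lemma, so appealing to toricity of $\phi_k$ is circular. Symmetries do not help either: only a finite group permutes the $\vartheta_\mu^{(k)}$, and invariance under a finite group cannot force constancy on a fibre, which is an affinoid torus.

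A quick test shows that some new input is unavoidable: your argument never uses $k>1$, yet the identity is false for $k=1$. For a principally polarized Tate curve, $\coker\Phi=0$ and the single theta function vanishes at some rigid point $x$. There the left-hand side is $-\infty$ while the right-hand side is finite. Likewise, for $k>1$, an individual $\vartheta_\mu^{(k)}$ whose coset contains two tied maximizers of $f_n(m)=\langle m,n\rangle-A_\Phi(m)$ can have strictly smaller absolute value than its largest term at suitable points of the fibre. So you should not try to prove termwise equality for every $\mu$.

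The missing idea, which is how the paper argues, is that you need to exclude cancellation for only one $\mu$. Keep the ultrametric upper bound for all $\mu$ at an arbitrary $x$ in the fibre. For the lower bound, let $m_k$ maximize $f_n$ over the whole discrete set $\bigcup_\mu\bigl(\tfrac{m_\mu}{k}+\Img\Phi\bigr)$, and let $\mu^*$ be the class whose coset contains $m_k$.

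Then $m_k$ is the \emph{unique} maximizer of $f_n$ on $\tfrac{m_{\mu^*}}{k}+\Img\Phi$. Suppose $m_k^*\neq m_k$ were another one. The point $m_k^*+\tfrac1k(m_k-m_k^*)$ would still lie in the discrete set, because $m_k-m_k^*\in\Img\Phi\subset M$. By strict concavity it would have strictly larger $f_n$-value, since $\tfrac1k\in(0,1)$; this is precisely where $k>1$ enters.

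Hence $\vartheta_{\mu^*}^{(k)}$ has a single dominant term at $x$. The remaining terms are bounded by a strictly smaller value because $f_n\to-\infty$ on the coset. The strict ultrametric inequality for the multiplicative seminorm $x$ then yields $\tfrac1k\log|\vartheta_{\mu^*}^{(k)}(x)|=f_n(m_k)$ at every point of the fibre, so the two bounds meet.

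Finally, your bookkeeping of the index set is correct: $\bigcup_\mu\bigl(\tfrac{m_\mu}{k}+\Img\Phi\bigr)=\tfrac1kM$, which is what the paper's own computation actually produces. This coincides with $\tfrac1k\Img\Phi$ only when $\Phi$ is surjective. Both the uniqueness argument and the later $O(k^{-2})$ estimate go through verbatim with $\tfrac1kM$.
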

\begin{proof}
It follows from \eqref{eq: normalizing scale} that
  \begin{align*}
      \log |\vartheta_\mu^{(k)} (x)| &\leq
     \max_{\lambda\in \Lambda} \left( \log |a_{k\Phi}^k (m_\mu + k\Phi (\lambda))|_\CK  e^{\langle m_\mu + k\Phi (\lambda), n \rangle} \right) \\
        &=\max_{\lambda\in \Lambda} \left(
        \langle m_\mu + k\Phi (\lambda), n \rangle - kA_\Phi\left(\frac{m_\mu}{k}+\Phi(\lambda)\right)\right)\\
        &=k\max_{m\in \frac{m_\mu}{k} + \Img \Phi} \left(
        \langle m, n \rangle - A_\Phi\left(m\right)\right).
  \end{align*}
   Hence  we obtain
   \begin{align*}
        \frac{1}{k}\max_{\mu\in \coker k\Phi} \left( \log |\vartheta_\mu^{(k)} (x)|\right)&\leq\max_{\mu\in \coker k\Phi}\max_{m\in \frac{m_\mu}{k} + \Img \Phi} \left(
        \langle m, n \rangle - A_\Phi\left(m\right)\right) \\
        &= \max_{m\in \frac{1}{k} \Img \Phi} \left( \langle m,n\rangle - A_\Phi(m)\right).
   \end{align*} 
Denote by $m_k\in  \frac{1}{k} \Img \Phi$ a maximizer of the  function $f_n(m):=\langle m, n \rangle - A_\Phi\left(m\right)$ over $\frac{1}{k} \Img \Phi$.
Then we have
$$m_k\in \frac{m_{\mu^*}}{k} + \Img \Phi$$ for some $\mu^*\in \coker k\Phi$.
If we have
\begin{equation}
    \label{eq: no cancellation}  \frac{1}{k}\log |\vartheta_{\mu^*}^{(k)} (x)| =\max_{m\in \frac{m_{\mu^*}}{k} + \Img \Phi} \left(
        \langle m, n \rangle - A_\Phi\left(m\right)\right),
\end{equation}
then the following computation
proves the assertion:
  \begin{align*}  
        \frac{1}{k}\max_{\mu\in \coker k\Phi} \left( \log |\vartheta_\mu^{(k)} (x)|\right)&\leq \max_{m\in \frac{1}{k} \Img \Phi} \left( \langle m,n\rangle - A_\Phi(m)\right) \\
        &=\langle m_k,n\rangle - A_\Phi(m_k) \\
        &=\max_{m\in \frac{m_{\mu^*}}{k} + \Img \Phi} \left(
        \langle m, n \rangle - A_\Phi\left(m\right)\right)\\
        &=\frac{1}{k}\log |\vartheta_{\mu^*}^{(k)} (x)| \\
        &\leq  \frac{1}{k}\max_{\mu\in \coker k\Phi} \left( \log |\vartheta_\mu^{(k)} (x)|\right).
   \end{align*} 
   Therefore, it suffices to show \eqref{eq: no cancellation}.
  Since $x$ is non-Archimedean, the equality in \eqref{eq: no cancellation} holds if 
  the function $f_n(m)$ attains its maximum over $\frac{m_{\mu^*}}{k} + \Img \Phi$ at a unique $m_k\in \frac{m_{\mu^*}}{k} + \Img \Phi$.
  Suppose otherwise. Namely, let $m_k^*$ be another maximizer of $f_n$ over $\frac{m_{\mu^*}}{k} + \Img \Phi$.
  Then $m_k^*$ is also a maximizer of $f_n$ over $\frac{1}{k}\Img \Phi$ with $f_n(m_k)=f_n(m_k^*).$
  However, since $f_n$ is strictly concave on $M_\R$, 
  we have $$f_n(m_t)>f_n(m_k)=f_n(m_k^*),$$
  where
  $m_t:=tm_k+(1-t)m_k^*\in M_\R$ for any $t\in (0,1)$.
  In particular, when $t\in \frac{1}{k}\Z$, where $k>1$, we have
  $$m_t=m_k^*+t(m_k-m_k^*)\in \frac{1}{k} \Img \Phi,$$
which contradicts the fact that $m_k$ is a  maximizer of $f_n$ over $\frac{1}{k}\Img \Phi$.
\end{proof}
\begin{Cor}
    The non-Archimedean balanced metric $\phi_k$ is a toric metric. Namely, 
    we have $$(p^*\phi_k)_1=(p^*\phi_k)_1\circ \trop_K.$$
\end{Cor}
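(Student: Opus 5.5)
The corollary should follow directly from \pref{lm: discrete Legendre dual}, by reading off that the right-hand side of \eqref{eq: discrete Legendre dual} depends on $x$ only through $n=-\trop_K(x)$.

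First I would work over $\C_K$. By \eqref{eq: NAFS from theta} and \eqref{eq: normalizing scale},
\[
(p^*\varphi_k)_1(x)=\frac{1}{k}\max_{\mu\in\coker k\Phi}\log|\vartheta_\mu^{(k)}(x)|
\]
for $x\in T_{\C_K}^\an$. For any such $x$, set $n:=-\trop_{\C_K}(x)\in N_\R$. Then \pref{lm: discrete Legendre dual} gives
\[
(p^*\varphi_k)_1(x)=g_k(n),\qquad g_k(n):=\max_{m\in\frac{1}{k}\Img\Phi}\left(\langle m,n\rangle-A_\Phi(m)\right).
\]
The maximum in $g_k$ is attained: $\frac1k\Img\Phi$ is a discrete lattice and $A_\Phi$ is positive definite, so $\langle m,n\rangle-A_\Phi(m)\to-\infty$. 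Hence $g_k$ is a genuine real-valued (piecewise affine) function of $n$ alone. It follows that $(p^*\varphi_k)_1$ is constant on the fibres of $\trop$, and since $\trop\circ\frakG$ is the identity, $(p^*\varphi_k)_1=(p^*\varphi_k)_1\circ\frakG\circ\trop$. This is the intended meaning of "$=(p^*\phi_k)_1\circ\trop_K$": the value depends only on the tropical coordinate, that is, it equals $g_k(-\trop(\cdot))$.

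Next I would descend to $K$. The base change map $\pi:T_{\C_K}^\an\to T_K^\an$ is surjective and compatible with tropicalization, $\trop_K\circ\pi=\trop_{\C_K}$, because $|z^m|$ is preserved. By construction $(\phi_k)_{\C_K}=\varphi_k$, which gives $(p^*\phi_k)_1\circ\pi=(p^*\varphi_k)_1$. For any $y\in T_K^\an$, choose $x\in\pi^{-1}(y)$. Then
\[
(p^*\phi_k)_1(y)=g_k(-\trop_K(y)),
\]
so $(p^*\phi_k)_1$ factors through $\trop_K$ as claimed. Finally, $\Gamma_X$-invariance on the $X^\an$ side is automatic, since $p^*\phi_k$ is a pullback. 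It is also visible directly: $g_k$ is invariant under translation of $n$ by $\trop_K(\Gamma_X)$, via the substitution $m\mapsto m-\Phi(\lambda)$ and the quadratic identity for $A_\Phi$.

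The only delicate point is the hypothesis of the lemma. It is stated for $k>1$ and for $x\in\trop_K^{-1}(-n)$ in $T_{\C_K}^\an$. I would note that we are working with $k\ge3$ as in \pref{th: theta functions and NA balanced metrics}, so the lemma applies, and that the compatibility of the trivialization of $p^*L^\an$ with base change, needed to identify $p^*\phi_k$ with its $\C_K$-pullback, is exactly the statement $(\phi_k)_{\C_K}=\varphi_k$ already recorded after \eqref{eq: NAFS from theta}.
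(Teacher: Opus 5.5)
Your argument is correct and follows the paper's route: the corollary is read off directly from \pref{lm: discrete Legendre dual}, and your descent from $T^\an_{\C_K}$ to $T^\an_K$ (surjectivity of the base change map together with $\trop_K\circ\pi=\trop_{\C_K}$) makes explicit a step the paper leaves implicit. One slip in your optional closing remark: $g_k$ is not invariant under $n\mapsto n+\trop_K(b^*(\lambda))$; your substitution $m\mapsto m-\Phi(\lambda)$ actually gives $g_k(n+\trop_K(b^*(\lambda)))=g_k(n)-\log|a(\lambda)|_K+\langle\Phi(\lambda),n\rangle$, which is the cocycle rule \eqref{eq: tropical cocycle condition} and is what the descent to $L^\an$ requires.
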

Recall that $(\Lambda, a,b,\Phi)$ is the data associated to $L$.
As in \cite[\S~4]{GS23}, such a toric metric $\phi$ on $L^\an$ is identified with a function 
$f:N_\R\to \R$ satisfying the cocycle rule, namely, 
\begin{equation}
    \label{eq: tropical cocycle condition}
     f(n+\trop_K(b^*(\lambda)))=f(n) -\log |a(\lambda)|_K+\langle \Phi(\lambda),n\rangle
\end{equation}
for any $n\in N_\R$ and any $\lambda\in \Lambda$. 
More precisely, 
such a function $f$ corresponds to
a toric metric $\phi_f$ characterized by
$$(p^*\phi_f)_1=f\circ \trop_K.$$
Conversely, 
a toric metric $\phi$ corresponds to
the function 
$$f_\phi:=(p^*\phi)_1\circ \frakG,$$
where $\frakG\colon N_\R\to T_K^\an$ denotes the Gauss section.

Since $A_\Phi: M_\R\to \R$ is a (strictly) convex function, we obtain its \emph{Legendre dual} $A_\Phi^\vee:N_\R\to \R$ as
\begin{equation}\label{eq: Legendre dual}
    A_\Phi^\vee(n):=  \sup_{m\in M_\R} \left( \langle m,n\rangle - A_\Phi(m)\right)= \max_{m\in M_\R} \left( \langle m,n\rangle - A_\Phi(m)\right).
\end{equation}
As its discrete analogue, we call the right hand side of \eqref{eq: discrete Legendre dual}, namely,
$$ \max_{m\in \frac{1}{k} \Img \Phi} \left( \langle m,n\rangle - A_\Phi(m)\right),$$
the \emph{discrete Legendre dual of $A_\Phi$ at level $k$}.

Since $A_\Phi: M_\R\to \R$ is the quadratic form such that $A_\Phi(m)=\frac{1}{2}B_\Phi(m,m)$ for $m\in M_\R$, 
we have 
$$A_\Phi(m)=\frac{1}{2} \langle m, [B_\Phi]m\rangle,$$
where 
$[B_\Phi]: M_\R\to N_\R$ is the $\R$-valued positive definite symmetric matrix corresponding  to the quadratic form $B_\Phi$ via the natural pairing $\langle -,-\rangle:M_\R\times N_\R\to \R$.
Fix $n\in N_\R$, and set $$f_n(m):=\langle m,n\rangle - A_\Phi(m).$$
Since 
\begin{equation}\label{eq: argmax}
    \nabla f_n=\nabla \left( \langle m,n\rangle -\frac{1}{2} \langle m, [B_\Phi]m\rangle\right)= n-[B_\Phi]m,
\end{equation} 
the Legendre dual $A_\Phi^\vee: N_\R\to \R$ is given by 
$$A_\Phi^\vee(n)= \langle [B_\Phi]^{-1}n,  n\rangle - \frac{1}{2}\langle  [B_\Phi]^{-1}n,[B_\Phi] [B_\Phi]^{-1}n\rangle=\frac{1}{2}\langle  [B_\Phi]^{-1}n,n\rangle$$
for any $n\in N_\R$. In particular, $A_\Phi^\vee(-n)=A_\Phi^\vee(n).$
Moreover, it follows  from direct computation that $A_\Phi^\vee$ satisfies \eqref{eq: tropical cocycle condition}, which yields a toric metric $\phi_{A_\Phi^\vee}$ on $L^\an$.
As in \cite[Theorem~4.3]{Liu11}, the toric metric $\phi_{A_\Phi^\vee}$ coincides with the NACY metric $\phi^\mathrm{NACY}$on $L^\an$.

\begin{Thm}
\label{th: convergence of NA balanced metrics}
For the polarized totally degenerate abelian variety $(X,L)$ over $K$, the non-Archimedean balanced metrics $\phi_k$ on $L^\an$ converge  to the NACY metric $\phi^\mathrm{NACY}$ on $L^\an$ in the $\mathscr{C}^0$-topology as $k\to \infty$.
Furthermore, we have
    $$ \sup_{x\in X^\an} \left| \phi^\mathrm{NACY}-\phi_k\right| = O(k^{-2}).$$
\end{Thm}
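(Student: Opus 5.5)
We reduce the statement to an elementary estimate comparing the Legendre dual of $A_\Phi$ with its discrete Legendre dual at level $k$. By the Corollary following \pref{lm: discrete Legendre dual}, $\phi_k$ is a toric metric with associated function
$$f_k(n):=\max_{m\in \frac{1}{k}\Img \Phi}\left(\langle m,n\rangle - A_\Phi(m)\right).$$
Here we use \eqref{eq: discrete Legendre dual} at $x=\frakG(-n)$; strictly, the Gauss section $\frakG(-n)$ gives $\trop_K(x)=n$, so one should track the sign convention. Since $A_\Phi(-m)=A_\Phi(m)$ and $\frac{1}{k}\Img\Phi$ is symmetric, the function $f_k$ is even, so the sign does not matter. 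Also $\phi^\mathrm{NACY}=\phi_{A_\Phi^\vee}$ by \cite[Theorem~4.3]{Liu11}. Both metrics pull back to $T_K^\an$ as functions of $\trop_K$, and $p$ together with $\trop_K$ is surjective onto $N_\R$. Moreover, the difference $\phi^\mathrm{NACY}-\phi_k$ is a genuine function on $X^\an$: both $f_k$ and $A_\Phi^\vee$ satisfy the cocycle rule \eqref{eq: tropical cocycle condition}, with the same constants up to the normalization built into $\vartheta^{(k)}$. Hence
$$\sup_{X^\an}|\phi^\mathrm{NACY}-\phi_k|=\sup_{n\in N_\R}|A_\Phi^\vee(n)-f_k(n)|,$$
and it suffices to bound the right-hand side by $Ck^{-2}$.

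For the lower bound on $A_\Phi^\vee - f_k$: since $\frac{1}{k}\Img\Phi\subset M_\R$, we have $f_k\le A_\Phi^\vee$ trivially.

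For the upper bound we complete the square. Using \eqref{eq: argmax}, the maximizer of $f_n(m)=\langle m,n\rangle-A_\Phi(m)$ over $M_\R$ is $m^*=[B_\Phi]^{-1}n$, and
$$A_\Phi^\vee(n)-f_n(m)=A_\Phi(m-m^*)=\tfrac12 B_\Phi(m-m^*,m-m^*)$$
for every $m\in M_\R$. Therefore
$$A_\Phi^\vee(n)-f_k(n)=\min_{m\in\frac1k\Img\Phi}A_\Phi(m-m^*).$$
Take a fundamental domain $D$ of the lattice $\Img\Phi\subset M_\R$. It is compact because $\Phi$ has finite cokernel in $M$, so $\Img\Phi$ is a full-rank lattice in $M_\R$. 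Scaling gives the fundamental domain $\frac1k D$ for $\frac1k\Img\Phi$. Thus for every $m^*$ there is $m\in\frac1k\Img\Phi$ with $m-m^*\in -\frac1k D$. Homogeneity of degree $2$ then yields
$$A_\Phi^\vee(n)-f_k(n)\le \frac{1}{k^2}\max_{D}A_\Phi=:\frac{C}{k^2},$$
where $C$ is independent of $n$. This gives both the uniform $O(k^{-2})$ bound and the $\mathscr C^0$-convergence.

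The main obstacle I expect is the bookkeeping that identifies the function attached to $\phi_k$ exactly with $f_k$, with no additive constant or cocycle mismatch. Concretely, this means checking that the normalization $\log|a^k_{k\Phi}(m_\mu)|$ in \eqref{eq: NAFS from theta}, through \pref{pr: scale relation}, makes $f_k$ satisfy the same cocycle relation \eqref{eq: tropical cocycle condition} as $A_\Phi^\vee$. Only then does $A^\vee_\Phi-f_k$ descend to a bounded periodic function on $N_\R/\trop_K(\Gamma_X)$. I would verify this directly: for $\lambda\in\Lambda$, reindex $m\mapsto m+\Phi(\lambda)$ in the discrete maximum and use $A_\Phi(m+\Phi(\lambda))=A_\Phi(m)+B_\Phi(m,\Phi(\lambda))+A_\Phi(\Phi(\lambda))$, with $A_\Phi(\Phi(\lambda))=-\log|a(\lambda)|_K$. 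The identical computation applies to $A_\Phi^\vee$ with $M_\R$ in place of $\frac1k\Img\Phi$. In any case, the estimate $0\le A^\vee_\Phi-f_k\le Ck^{-2}$ holds pointwise on all of $N_\R$, so the periodicity is needed only to identify the sup over $X^\an$ with the sup over $N_\R$.
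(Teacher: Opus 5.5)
Your proposal is correct and follows essentially the same route as the paper. You reduce via toricity (Lemma~\ref{lm: discrete Legendre dual} together with $\phi^\mathrm{NACY}=\phi_{A_\Phi^\vee}$) to bounding $0\le A_\Phi^\vee-f_k$ on $N_\R$, complete the square to get $\min_{m\in\frac1k\Img\Phi}A_\Phi(m-m^*)$, and conclude by degree-$2$ homogeneity and compactness of $M_\R/\Img\Phi$; your compact fundamental domain $D$ plays exactly the role of the paper's bounded function $\delta$.

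Two minor remarks. The cocycle bookkeeping is unnecessary, since a difference of two continuous metrics on $L^\an$ is automatically a function on $X^\an$. Your aside on the Gauss section has the sign reversed: $\trop_K\circ\frakG=\id$, so $\frakG(-n)$ already lies in $\trop_K^{-1}(-n)$. The evenness of $f_k$ makes this slip harmless.
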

\begin{proof}
Since $\phi_k$ and $\phi^\mathrm{NACY}=\phi_{A_\Phi^\vee}$ are toric, 
by \pref{lm: discrete Legendre dual} and \eqref{eq: Legendre dual},
it suffices to show that 
  \begin{equation}\label{eq: key estimate}
      \sup_{n\in N_\R} \left| d_k(n)\right| = O(k^{-2}),
  \end{equation} 
where
 $$d_k(n):=\max_{m\in M_\R} \left( \langle m,n\rangle - A_\Phi(m)\right)-\max_{m\in \frac{1}{k} \Img \Phi} \left( \langle m,n\rangle - A_\Phi(m)\right).$$
By definition, we have $d_k\geq 0$.
 Recall that, for any $n\in N_\R$, the function $f_n=\langle m,n\rangle - A_\Phi(m)$
 attains its maximum over $M_\R$ at $m_0=[B_\Phi]^{-1}n$ by \eqref{eq: argmax}. Then we obtain
\begin{align*}
    d_k(n)
         &= \langle m_0,n\rangle -\frac{1}{2} \langle m_0, [B_\Phi]m_0\rangle - \max_{m\in \frac{1}{k} \Img \Phi} \left( \langle m,n\rangle - \frac{1}{2} \langle m, [B_\Phi]m\rangle \right)\\
         &=  \min_{m\in \frac{1}{k} \Img \Phi} \left( \langle m_0,n\rangle -\frac{1}{2} \langle m_0, [B_\Phi]m_0\rangle - \langle m,n\rangle + \frac{1}{2} \langle m, [B_\Phi]m\rangle \right)\\
         &=  \min_{m\in \frac{1}{k} \Img \Phi} \left( \langle m_0,[B_\Phi]m_0\rangle -\frac{1}{2} \langle m_0, [B_\Phi]m_0\rangle - \langle m, [B_\Phi]m_0\rangle + \frac{1}{2} \langle m, [B_\Phi]m\rangle \right)\\
         &=  \min_{m\in \frac{1}{k} \Img \Phi} \left(\frac{1}{2} \langle m_0, [B_\Phi]m_0\rangle - \langle m, [B_\Phi]m_0\rangle + \frac{1}{2} \langle m, [B_\Phi]m\rangle \right)\\
         &=  \min_{m\in \frac{1}{k} \Img \Phi} \frac{1}{2} \langle m_0-m, [B_\Phi](m_0-m)\rangle \\
         &=  \min_{m\in \frac{1}{k} \Img \Phi}  A_\Phi(m_0-m) = \frac{1}{k^2}  \min_{\lambda \in  \Img \Phi}  A_\Phi(km_0-\lambda)= \frac{1}{k^2}  \min_{\lambda \in  \Img \Phi}  A_\Phi(k[B_\Phi]^{-1}n-\lambda).\\
\end{align*}
Since an induced homomorphism $k[B_\Phi]^{-1}:N_\R\to M_\R$ is an isomorphism, we have
\begin{align*}
   \sup_{n\in N_\R} d_k(n)
         &= \frac{1}{k^2} \sup_{n\in N_\R} \min_{\lambda \in  \Img \Phi}  A_\Phi(k[B_\Phi]^{-1}n-\lambda)\\
          &= \frac{1}{k^2} \sup_{m\in M_\R} \min_{\lambda \in  \Img \Phi}  A_\Phi(m-\lambda).\\
\end{align*}
Here, consider a function $\delta :M_\R\to \R$ defined by
$$\delta(m):=\min_{\lambda \in  \Img \Phi}  A_\Phi(m-\lambda).$$ 
By definition, the function $\delta$
descends to a function on $M_\R/ \Img \Phi$.
Since $M_\R/ \Img \Phi$ is compact, the function $\delta$ is bounded.
Namely, there exists $C>0$ such that 
$$\sup_{m\in M_\R}\delta(m)=\sup_{m\in M_\R} \min_{\lambda \in  \Img \Phi}  A_\Phi(m-\lambda)\leq C.$$
Hence we obtain \eqref{eq: key estimate}.
\end{proof}

As a future direction, we consider the case of general Calabi--Yau varieties.

\begin{Prop}\label{pr: asymptotically Chow stable}
    Any polarized smooth Calabi--Yau variety $(X,L)$ over a field $k$ of characteristic zero is asymptotically Chow stable.
\end{Prop}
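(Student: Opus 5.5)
The plan is to reduce to the complex case via the Lefschetz principle and then apply Donaldson's theorem \cite[Theorem~3]{Don01}, as recalled in \pref{ss: NA balanced metric}.

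\textbf{Step 1: reduction to a countable subfield.} Here ``Calabi--Yau'' means $K_X\sim_\Q 0$; if needed we take it to mean $K_X$ numerically trivial, so the Ricci-flat metric exists. Chow stability of $(X,kL)$ is GIT stability of the Chow point $[R_X]$ under $\SL(V_k)$. By the Hilbert--Mumford criterion this condition is invariant under field extension and descends along it, since a rational point has the same stability over any extension of a field of characteristic zero. So we may pass to $\overline{k}$. Then $(X,L)$ is defined over a finitely generated subfield $k_0\subset\overline{k}$, and we choose an embedding $k_0\inj\C$. Stability over $\overline{k}$ is then equivalent to stability of the base change $(X_\C,L_\C)$. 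The integer $k_1$ beyond which $(X,kL)$ is Chow stable is the same before and after the base change.

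\textbf{Step 2: hypotheses of Donaldson's theorem over $\C$.} $X_\C$ is a compact K\"ahler manifold with $c_1(X_\C)=0$ in $H^2(X_\C,\R)$. By Yau's theorem, $c_1(L_\C)$ contains a Ricci-flat K\"ahler metric, which is cscK.

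\textbf{Step 3: discreteness of the automorphism group.} We need $\Aut(X_\C,L_\C)$ discrete, i.e.\ $H^0(X_\C,T_{X_\C})=0$ after modding out the scalar action on $L$. This is the main obstacle, because it fails for abelian varieties and more generally whenever $X$ has an abelian factor; there $\Aut^0(X_\C)$ is the translation torus.

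\begin{itemize}
\item If ``Calabi--Yau'' includes $h^0(\Omega^1)=0$, or more simply $H^0(T_X)\cong H^0(\Omega^{d-1})\cong \overline{H^{0}(\Omega^{1})}^{\,\ast}$ by Hodge symmetry and $K_X$ trivial, then discreteness follows directly.
\item In general I would use the Bogomolov--Beauville decomposition, up to finite \'etale cover, into torus $\times$ simply connected factors. Then $\Aut^0(X,L)$ is at most the group of translations by a finite group $K(L)$, whose identity component is trivial. More precisely, $\Aut^0(X_\C,L_\C)$, the connected group of automorphisms lifting to $L$, consists of translations $t_a$ with $t_a^*L\cong L$. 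These form the finite group $K(L)$ because $L$ is ample, so the relevant connected group is trivial.
\end{itemize}

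Hence the reduced automorphism group of $(X_\C,L_\C)$ is finite. This is exactly what is used in \cite[Theorem~3]{Don01}, where the hypothesis concerns holomorphic vector fields lifting to $L$.

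\textbf{Step 4: conclusion.} By \cite[Theorem~3]{Don01}, $(X_\C,kL_\C)$ is Chow stable for $k\gg0$. Step 1 then transfers this back to $(X,kL)$ over $k$.

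The delicate points I expect are the following.
\begin{itemize}
\item Making precise that Donaldson's discreteness hypothesis is the Lie algebra of automorphisms of the pair $(X,L)$ modulo scalars. This algebra vanishes as argued, since a holomorphic vector field on a CY manifold is parallel, and lifting it to $L$ forces the induced translation to preserve $L$. An ample $L$ kills every nonzero such field.
\item Justifying descent of GIT stability along field extensions, via the numerical criterion over an algebraically closed field together with invariance under base change of algebraically closed fields.
\end{itemize}
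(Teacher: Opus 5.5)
Your reduction to $\C$ (invariance of GIT stability under field extension plus the Lefschetz principle), the appeal to Yau's theorem, and the final use of \cite[Theorem~3]{Don01} match the paper. The genuine difference is how you show that $\Aut(X_\C,L_\C)$ is discrete.

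The paper never looks at vector fields. Since $L$ is ample, $G:=\Aut^0(X,L)$ acts faithfully on $\PP(H^0(X,kL)^\vee)$ preserving $X$, so it is a connected \emph{linear} algebraic group and hence rational. If $\dim G>0$, a general orbit $Gx$ is a positive-dimensional unirational subvariety through $x$, which would make $X$ uniruled. This is impossible because $K_X$ is pseudo-effective (\cite{BDPP13}; for $K_X\sim_\Q 0$ the elementary fact that uniruled varieties have vanishing plurigenera already suffices).

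You instead use the Bochner/Beauville--Bogomolov structure: holomorphic vector fields are parallel, so $\Aut^0(X)$ is a torus acting by ``translations'', and only finitely many of these preserve an ample $L$. This gives a finer picture, since it identifies exactly which continuous symmetries of $X$ are lost when passing to $(X,L)$. It costs heavier input, though.

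Your last step is also literally the finiteness of $K(L)$ only when $X$ itself is an abelian variety, so in general you need one more line. For example: $B:=\Aut^0(X)$ is an abelian variety acting with finite stabilizers, because nonzero parallel fields have no zeros. Hence an orbit map $o_x\colon B\to X$ is finite and $o_x^*L$ is ample. The relation $o_x\circ t_b=b\circ o_x$ then gives $\{b\in B : b^*L\cong L\}\subset K(o_x^*L)$. Alternatively, note that this stabilizer embeds in $\mathrm{PGL}(H^0(X,kL))$, and a connected group that is both linear and a closed subgroup of an abelian variety is trivial; this is essentially the paper's observation.

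The paper's argument is shorter and shows more generally that $\Aut^0(X,L)$ is trivial for every polarized non-uniruled $X$. Yours is tailored to $c_1(X)=0$.
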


\begin{proof}
    As in \cite[Proposition~1.14]{GIT}, GIT stability is invariant under field extension.
Hence,
by the Lefschetz principle, we may assume that $k=\C$.
Then it holds that $\Aut(X,L)$ is discrete. 
Indeed, since $L$ is ample, $G:=\Aut^0(X,L)$ is a connected linear algebraic group.
As $G$ acts faithfully on $X$, 
for a general point $x\in X$, the orbit $Gx$ of $x$ has positive dimension if $\dim G>0$.
In particular, $Gx$ contains a rational curve passing through $x$, which means that $X$ is uniruled.
However,
$X$ is not uniruled; see, for instance, \cite{BDPP13}.
Hence $\dim G=0$, which implies that $\Aut(X,L)$ is discrete. 
Then it follows from \cite[Theorem~3]{Don01} and \cite[Theorem~2.2, Theorem~3.4]{Zha96} that 
$(X,L)$ is asymptotically Chow stable.
\end{proof}

In the proof of \pref{pr: asymptotically Chow stable}, we proved, in particular, that $\Aut(X,L)$ is discrete when $k=\C$. The author learned this fact in conversation with Yuji Odaka, who noted that it is well known to experts and explained how it can be proved. The argument presented here follows his explanation. However, the author takes full responsibility for any errors.

By \pref{pr: asymptotically Chow stable} and \pref{rm: uniqueness of NA bal is unclear, existence is slightly different}, the following holds:
\begin{Cor}
    For any 
    smooth polarized Calabi--Yau variety $(X,L)$ over $K$ and sufficiently large $k\in \Z_{>0}$, 
    there exists a critical metric $\varphi_k$ on $L_\CK^\an$ at  level $k$.
\end{Cor}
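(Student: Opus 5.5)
The plan is to combine \pref{pr: asymptotically Chow stable} with the existence statement for critical metrics recalled in \pref{rm: uniqueness of NA bal is unclear, existence is slightly different}, applied after base change to $\CK$.

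First, apply \pref{pr: asymptotically Chow stable} to $(X,L)$ over $K$, which has characteristic zero. This gives $k_0$ such that $(X,kL)$ is Chow stable for every $k\ge k_0$. Enlarging $k_0$ if necessary, we may also assume $kL$ is very ample, so that the Chow point $[R_X]$ with respect to $V_k:=H^0(X,kL)$ is defined.

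Second, pass to $\CK$. GIT (semi)stability is invariant under field extension (\cite[Proposition~1.14]{GIT}, as used in the proof of \pref{pr: asymptotically Chow stable}). Hence $(X_\CK,kL_\CK)$ is Chow stable, and in particular Chow semistable. Moreover $\CK$ is algebraically closed, non-trivially valued and of characteristic zero, so it satisfies all the standing hypotheses on $F$ in \pref{ss: NA balanced metric}.

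Third, produce the critical metric. By \cite[Theorem~2.7]{Zha96}, in the form recalled in \pref{rm: uniqueness of NA bal is unclear, existence is slightly different}, Chow semistability guarantees that the function $p_X$ on $\SL(V_{k,\CK})^\an$ attains its minimum. Write $\nm_0\in\calN^\str(V_{k,\CK})$ for the reference strictly Cartesian norm used to define $\nm_{\mathrm{Ch}}$. Choose $g_0$ at which the minimum is attained, taken among $\CK$-points. This choice is legitimate: by the non-Archimedean Kempf--Ness theory (\cite{Bur92}, \cite[\S~3]{Mac17}), the minimum of $p_X$ on $\SL^\an$ is already realized on rational points modulo the maximal compact subgroup $U_{\nm_0}$.

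Now transport the norm by $g_0$: set $\nm:=\nm_0\circ g_0^{-1}$, which is again strictly Cartesian. Its Chow norm is the transported one, $\nm_\mathrm{Ch}=\nm_{0,\mathrm{Ch}}\circ g_0^{-1}$. Consequently the function $p_X$ defined with respect to $\nm$ is the translate $g\mapsto p_X^{\nm_0}(gg_0)$, up to the normalization of $R_X$, which is irrelevant by \pref{rm: uniqueness of NA bal is unclear, existence is slightly different}. This translate attains its minimum at $e$. Therefore $\varphi_k:=\frac1k\FS(\nm)\in\FS_k^\str(L^\an_\CK)$ is critical in the sense of \pref{df: NA critical metric}.

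The main obstacle is this third step: converting a minimizer on the Berkovich space $\SL^\an$ into a genuine $\CK$-point, so that the resulting norm is strictly Cartesian. The first approach to try is to invoke \cite[Theorem~2.7]{Zha96} directly, since its proof works with norms over an algebraically closed complete field and hence produces a minimizing norm in $\calN^\str$ rather than a point of $\SL^\an$. If that does not suffice, the fallback is to use that $\log p_X$ is convex along geodesics in the Bruhat--Tits building, which identifies $\SL(\CK)/U_{\nm_0}$ with strictly Cartesian norms of fixed determinant by \pref{pr: structure theorem for strictly Cartesian norms}. Combined with properness of $\log p_X$ coming from Chow stability, this yields a minimizer at a point with values in $\log|\CK^\times|=\Q$.
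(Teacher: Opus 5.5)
Your proposal is correct and follows the paper's own route. The paper deduces the corollary directly from \pref{pr: asymptotically Chow stable} together with Zhang's existence result for Chow semistable varieties, recalled in \pref{rm: uniqueness of NA bal is unclear, existence is slightly different}, with the passage to $\CK$ handled as in \pref{cr: NA balancing in our situation}; your concern about getting a $\CK$-rational minimizer (hence a strictly Cartesian norm) is exactly what \cite[Theorem~2.7]{Zha96} supplies, so invoking it directly suffices and the building-theoretic fallback is unnecessary.
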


Note that, even if a critical metric $\varphi$ exists, it remains unclear whether there exists an NA balanced metric $\phi$ on $L^\an$. The subtlety lies in whether $\varphi$ descends to $\phi$.

\begin{Q.} \label{q: existence of NA balanced}
Let $(X,L)$ be an asymptotically Chow stable polarized smooth variety over $K$.
For all sufficiently large $k$, does the critical metric $\varphi_k$ at level $k$ descend to a
NAFS metric $\phi_k$ on $L^\an$ at level $k$? Equivalently, does there exist an NA balanced metric $\phi_k$ on $L^\an$ at level $k$?
\end{Q.}

\begin{Rem}
    Suppose that the critical metric $\varphi_k$ on $L^\an_{\CK}$ descends to 
a strict Fubini--Study metric $\phi_k'$ on $L^\an_{K'}$, where $K'/K$ is a finite extension, as is the case for abelian varieties.
By \cite[5.1.25.~Th\'eor\`eme]{BT84}, the NAFS metric $\phi_k'$ on $L^\an_{K'}$ descends further to a NAFS metric $\phi_k$ on $L^\an$ at level $k$ if and only if
the corresponding strictly Cartesian norm $\nm \in \calN^\str (V_k')$ is invariant under $\mathrm{Gal}(K'/K)$, where $V_k':=V_k\otimes K'$. 
\end{Rem}

In what follows, we assume that \pref{q: existence of NA balanced} has an affirmative answer.
Then, in light of the results of \cite{Don01}, it is natural to ask the following:

\begin{Q.} \label{q: NA estimate for NACY}
 For any maximally degenerate smooth polarized Calabi--Yau variety $(X,L)$ over $K$,
  do these NA balanced metrics $\phi_k$ approximate
the NACY metric $\phi^\mathrm{NACY}$ on $L^\an$? 
More precisely,  can we obtain the estimate $$\sup_{x\in X^\an}|\phi^\mathrm{NACY}-\phi_k|=O(k^{-1}) ? $$
\end{Q.}

\begin{Rem}

As mentioned in \pref{rm: uniqueness of NA bal is unclear, existence is slightly different}, it remains unclear whether such a NA balanced metric is unique even if $(X,L)$ is (asymptotically) Chow stable.
In addition, as studied in \cite{Li25}, NACY metrics can also be considered beyond the maximally degenerate setting. However, as pointed out in \cite{BJ17}, their definition in this generality appears to require Archimedean data. For the time being, we therefore restrict our attention to the maximally degenerate case in order to formulate the problem entirely within the non-Archimedean framework.
\end{Rem}

Furthermore, it is also natural to ask the following question:

\begin{Q.} 
Let $(X,L)$ be an asymptotic Chow stable polarized smooth variety over $K$.
When does the sequence of NA balanced metrics $\phi_k$ converge?
Besides, if the limit $$\phi_\infty:=\lim_{k\to \infty} \phi_k$$
exists, then does this admit an analytic definition, analogous to the cscK metrics?
\end{Q.}

\section{Applications via hybrid geometry}
\label{sc:Applications via hybrid geometry}
Non-Archimedean estimates, such as \pref{th: convergence of NA balanced metrics}, often yield uniform estimates for a given family.
To give such an example,
in this section, we consider a holomorphic (degenerating) family $(\calX,\calL)$ of polarized
varieties over the punctured disc $\Delta^*:=\{t\in \C\ | \ 0< |t|_\infty<e^{-1}\}$,
where $\va_{\infty}$ is the usual absolute value on $\C$,
such that the structure map
$\pi:\calX\to \Delta^*$ is flat,
and the map $\pi$ can be extended to a proper flat map 
$\pi_\scrX:\scrX \to \Delta:=\{t\in \C \ | \ |t|_\infty<e^{-1}\}$ with $\scrX$ normal.
Let 
 $\nm_\hyb : \C \to \R_{\geq 0}$ be the \emph{hybrid norm on $\C$} given as
 $$\nm_{\rm hyb}:=\max\{ \va_{\infty},\va_{\rm triv}\},$$ where $\va_{\rm triv}$ is the trivial valuation on $\C$. 
Then $\calX$ is defined over the \emph{hybrid (circle) ring}
 \begin{equation}
     \label{eq: hybrid circle ring}
 \scrA:=\left\{f=\sum_{\alpha \in \Z} c_\alpha t^\alpha \in K \ \middle| \ c_\alpha\in \C, ||f||_{\hyb, e^{-1}}:=\sum_{\alpha\in \Z} \left(||c_\alpha||_{\rm hyb} \cdot e^{-\alpha} \right) <\infty \right\}.
 \end{equation}
 Here, we may apply the Cauchy--Hadamard theorem to \eqref{eq: hybrid circle ring}.
That is,
 after multiplying 
 $f\in \scrA$
 by a sufficiently large power of $t$,
 the radius of convergence  is at least $e^{-1}$. Thus, for any
 $f\in \scrA$ and  any $t\in \Delta^*$, the value $f(t)\in \C$ is well-defined.
 By abuse of notation, we still denote by $\pi:\calX\to \scrA$ the structure morphism induced by $\pi_\scrX$.
 In addition, the line bundle $\calL$ is assumed to be $\pi$-ample.
 Then, since $\scrA \subset K$, we obtain the associated polarized variety $(X,L):=(\calX_K,\calL_K)$ over $K$.
 Furthermore,
since $\calL$ is $\pi$-ample, by the Serre vanishing theorem, for sufficiently large $k$,
it holds that
$R^i\pi_* (k \calL)=0$ for any $i>0$.
Since $\pi$ is flat, for any such $k$, the dimension $h^0(\calX_t,k\calL_t)=\chi (\calX_t,k\calL_t)$
is constant in $t\in \Delta^*$, which is denoted by $N_k$.

\subsection{Hybrid analytic spaces and hybrid continuous metrics}
\label{ss: hybrid spaces}
The framework of Berkovich geometry is not limited to the geometry over non-Archimedean fields considered so far, but naturally extends to the geometry over more general Banach rings such as so-called \emph{geometric base rings}; see \cite{LP24} for details.
Based on the framework, we consider the geometry over $\scrA$.
Here, note that $\scrA$ is a Banach ring with respect to the norm $||\cdot ||_{\hyb, e^{-1}}$ as in \eqref{eq: hybrid circle ring}. Furthermore, it follows from \cite[Proposition~2.1.1]{Poi10} that its Berkovich spectrum $\scrM(\scrA)$ is isomorphic to the closed disc $\overline{\Delta}:=\{t\in \C\ | \ |t|_\infty\leq e^{-1}\}$ of radius $e^{-1}$.
Since we assume $\calX$ is defined over $\scrA$, we can obtain its analytification
$\calX^\hyb$, which is called the \emph{hybrid analytification}.
In particular, $\calX^\hyb$ is endowed with a topology and a structure sheaf. Furthermore, the line bundle $\calL$ on $\calX$ induces a line bundle $\calL^\hyb$ on $\calX^\hyb$.
A notable feature of $\calX^\hyb$ is that the structure morphism $\pi:\calX\to \scrA$ induces a fibration
\begin{equation}
    \label{eq: hybrid structure morphism}
    \pi^\hyb:\calX^\mathrm{hyb}\to \Delta=\Delta^*\cup \{ 0\}
\end{equation}
such that there exist isomorphisms $h:\calX^\mathrm{hyb}|_{\Delta^*}\simeq \calX|_{\Delta^*}$ and $\calX^\hyb|_{t=0}\simeq X^\an$. In particular, for any $t\in \Delta^* \left(\subset \overline{\Delta}\simeq \scrM(\scrA)\right)$, 
an induced isomorphism $$h_t:\calX^\hyb_t:=\calX_{\scrH(t)}^\an\simeq \calX_t$$ 
can be regarded as a rescaling by $c(t):=\frac{-1}{\log |t|_\infty}$
since the completed residue field $\scrH(t)$ at $t \in \Delta^*$ is given by the field $\C$ with a rescaled absolute value $|\cdot(t)|:=|\cdot|_\infty^{c(t)}$.
Hence, $h_t:\calX^\hyb_t\simeq \calX_t$ 
induces $$\calL_t^\hyb\simeq c(t)\calL_t$$ as an $\R$-divisor on $\calX_t$. 
Note that $c(t)\to0$ as $t\to 0$.
In this sense, the geometry over $\scrA$ would be helpful to study (degenerating) families of polarized varieties.
This approach was initiated by \cite{BJ17}, and further developed by \cite{Fav20} and \cite{PS23} with respect to metrics.

\begin{Def}\label{df: hybrid continuous metric}
    A \emph{(hybrid) continuous metric $\phi$} on $\calL^\hyb$ is a family of norms
$$ \nm_{\phi(x)}: \calL(x):=\calL\otimes \scrH(x)\to \R_{\geq 0},\ \mathrm{for \ each\ } x\in \calX^\hyb$$
 such that for any local trivializing section $s$ of $\calL$ on an open subset $U\subset \calX$, the induced function 
$$\phi_s:=-\log ||s||_\phi :U^\hyb\to \R $$
is continuous. 

\end{Def}
Since \pref{df: hybrid continuous metric}  is obtained from \pref{df: continuous metric on nA spaces} merely by replacing $(X,L)$ with $(\calX,\calL)$,
the same argument as at the beginning of \pref{ss: NA pluripotential theory} applies verbatim and gives the following:
\begin{itemize}
    \item For any two continuous metrics $\phi$ and $\psi$ on $\calL^\hyb$, the difference $\phi-\psi$ 
    is a continuous metric on the trivial sheaf $\calO_\calX^\hyb$, 
    and it is identified with a continuous function on $\calX^\hyb$ by $-\log||1||_{\phi-\psi}: \calX^\hyb \to \R$.
    \item Since $\calX$ is proper over $\scrA$, 
    $\calX^\hyb$ is compact. Hence, for any two continuous metrics $\phi$ and $\psi$ on $\calL^\hyb$,
the \emph{uniform norm} 
\begin{equation}\label{eq: hybrid uniform norm}
\sup_{ \calX^\hyb} |\phi-\psi| :=   \sup_{ \calX^\hyb} \left|-\log||1||_{\phi-\psi}\right|
\end{equation}
 is well-defined.
\end{itemize}
Furthermore,
let $\{s_i\}_{i\in I}\subset H^0(\calX,k\calL)$ be a finite set of  sections for some $k\geq 1$ with no common zeros, and let 
$\{\lambda_i\}$ be a finite set of real numbers indexed by the same set $I$.
Then
\begin{equation}\label{eq: hybrid FS}
    \left(\frac{1}{k} \max_{i\in I}\left\{\log|s_i| +\lambda_i\right\}\right)_s(x):=\frac{1}{k} \max_{i\in I}\left\{-\log|(s^k/s_i)(x)| +\lambda_i\right\}
\end{equation}
for any local trivializing section $s$ of $\calL$, 
defines a hybrid continuous metric
 in a similar way to \pref{eg: log|s|}.

\begin{Def}
A hybrid continuous metric on $\calL^\hyb$ of the form \eqref{eq: hybrid FS}
 is called a \emph{(hybrid) tropical Fubini--Study metric on $\calL^\hyb$}.
Denote by $\FS^\tau(\calL^\hyb)$ the set of hybrid tropical Fubini--Study metrics on $\calL^\hyb$.
A hybrid continuous metric on $\calL^\hyb$ is said to be \emph{continuous plurisubharmonic} (\emph{cpsh} for short) if 
$\phi$ can be written as the uniform limit of a decreasing net $\{\phi_i\}_i\subset \FS^\tau(\calL^\hyb)$. Denote by $\CPSH (\calL^\hyb)$ the set of (hybrid) cpsh metrics on $\calL^\hyb$, endowed with the topology of uniform convergence on $\calX^\hyb$ given by \eqref{eq: hybrid uniform norm}.
\end{Def}
For any $t\in \Delta$ and any hybrid cpsh metric $\phi\in \CPSH(\calL^\hyb)$, we write, by abuse of notation, $\phi(t)$ for its restriction to $\calX^\hyb_t$. Then
    it follows from \pref{df: NAFS and cpsh} and \cite[Theorem~2.11]{PS23} that
    \begin{equation}\label{eq: hyb cpsh metric}
        \phi (t) \in 
  \begin{cases*}
   \CPSH(\calL_t^\hyb)=\CPSH(c(t)\calL_t) & if $t\neq 0$, \\
   \CPSH(\calL_0^\hyb)=\CPSH(L^\an)     & if $t=0$.
  \end{cases*} 
    \end{equation} 
In particular, $c(t)^{-1}\phi(t)$ may be regarded as a cpsh metric on $\calL_t$. By abuse of notation, we sometimes denote this metric on $\calL_t$ by $\phi_t$ so as to keep the notation compatible with the scaling on the complex side.
In view of \eqref{eq: hyb cpsh metric}, every hybrid cpsh metric on $\calL^\hyb$ induces a unique cpsh metric on $L^\an$. However, the converse is false. That is, if a cpsh metric on $L^\an$ admits an extension to a hybrid cpsh metric on $\calL^\hyb$, then such an extension is not unique in general; see \pref{eg: extendability of NAFS}. On the other hand, 
if two hybrid cpsh metrics $\phi$ and $\psi$ on $\calL^\hyb$ satisfy $\phi|_{\Delta^*}=\psi|_{\Delta^*}$, then $\phi=\psi$.
In addition, by \cite[Theorem~3.13]{PS23}, 
if a family 
$$\phi:=\{\phi_t\}_{t\in \Delta^*}\in \prod_{t\in \Delta^*}\CPSH(\calL_t)$$
 has logarithmic growth at $t=0$ in the sense of \cite[Lemma~2.3.2]{Reb23}, then 
 there exists a canonical extension  $\phi^\hyb$ (of $c(t)\phi$) as a (possibly singular) hybrid psh metric on $\calL^\hyb$ 
 although the family $\phi$ may have a non-canonical extension.
\begin{example} \label{eg: extendability of NAFS}
   Consider a NAFS metric $\phi:=\phi^\mathrm{NA}$ on $L^\an$ given by  a finite set $\{s_i\}\subset H^0(X,kL)$ with no common zeros for some $k\in\Z_{>0}$ and  $\lambda_i\in \R$ for each $i\in I$.
   Suppose that these $\{s_i\}\subset H^0(X,kL)$ can be lifted to sections of $H^0(\calX,k\calL)$.
   Then, by possibly shrinking the radius of $\Delta$, we may assume that $\{s_i\}\subset H^0(\calX,k\calL)$ has still no common zeros. For those $\{s_i\}\subset H^0(\calX,k\calL)$ and  $\{\lambda_i\}$, the associated hybrid tropical Fubini--Study metric $\phi^\hyb_\infty$ is a cpsh metric on $\calL^\hyb$ such that its restriction $\phi^\hyb_\infty (0)$ to $\calX^\hyb_0\simeq X^\an$ is the given $\phi$. In other words, $\phi$ can be extended to the hybrid cpsh metric  $\phi_\infty^\hyb$  on $\calL^\hyb$.
   Note that this extension is not  unique. Indeed, by the same argument as in \cite[Lemma~3.6]{PS23}, for any $p>0$, there 
exists a hybrid cpsh metric
    $\phi_p^\hyb$ on  $\calL^\hyb$ such that 
\begin{equation}\label{eq: hybrid Lp metric}
        \phi_p^\hyb (t) =
  \begin{cases*}
   \frac{1}{pk} \log\left(\sum_{i\in I} e^{p\lambda_i} |s_i|^p\right)= \frac{1}{pk} \log\left(\sum_{i\in I} |e^{\lambda_i} s_i|^p\right)& if $t\neq 0$, \\
   \frac{1}{k} \max_{i\in I}\left\{\log|s_i| +\lambda_i\right\} =\phi & if $t=0$.
  \end{cases*} 
    \end{equation} 
    We call it the \emph{hybrid $L^p$-metric associated to $\phi$.} 
In particular, 
the hybrid $L^2$-metric is called the \emph{hybrid Bergman metric} in \cite{PS23}.
Furthermore, by allowing $p=\infty$, we may regard hybrid tropical Fubini--Study metrics as \emph{hybrid $L^\infty$-metrics}.
When  $\{s_i\}\subset H^0(\calX,k\calL)$ forms a basis of $H^0(\calX_t,k\calL_t)$ for some $t\in \Delta^*$, the restriction   of the hybrid Bergman metric
\begin{equation}
    \label{eq: hyb Bergman metric}
\phi_2^\hyb(t)=\frac{1}{2k} \log\left(\sum_{i\in I} |e^{\lambda_i} s_i|^2\right)    
\end{equation}
corresponds to the Hermitian norm $\nm$ on $H^0(\calX_t,k\calL_t)$ diagonalized by $\{s_i\}$ with  $||s_i||=e^{-\lambda_i}$ for each $i$, which also corresponds to a usual complex Fubini--Study metric on $\calL_t$. 
\end{example}

By \eqref{eq: hyb cpsh metric},
considering hybrid cpsh metrics on $\calL^\hyb$ allows us to treat cpsh metrics on the given family $\calL$ and non-Archimedean cpsh metrics on $L^\an$ simultaneously, while also exploiting the topological properties of $\calX^\hyb$ as follows:

\begin{Lem}\label{lm: usc for sup}
    Let $Y$ be a locally compact Hausdorff topological space, and let $\pi \colon X \to Y$ be a proper map.  
Then, for any continuous function $f \in \mathscr{C}^0(X)$ on $X$, the function $u:Y\to \R$ defined as
$$
y \longmapsto u(y):=\sup_{x \in \pi^{-1}(y)} f(x)
$$
is upper semicontinuous on $Y$.
\end{Lem}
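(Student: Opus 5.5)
To prove \pref{lm: usc for sup}, I would work directly from the definition of upper semicontinuity. Fix $y_0\in Y$ and a real number $c>u(y_0)$. The goal is an open neighbourhood $W$ of $y_0$ on which $u<c$.

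A preliminary point is that $u$ may take the value $-\infty$ on points with empty fibre. This is harmless, since the upper semicontinuity condition only bounds $u$ from above. When $\pi^{-1}(y)\neq\emptyset$, the fibre is compact (it is the preimage of the compact set $\{y\}$ under the proper map $\pi$). Hence $u(y)$ is finite and is attained as a maximum.

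The key step is as follows. Consider the closed set
$$F:=\{x\in X \mid f(x)\geq c\},$$
which is closed because $f$ is continuous. By the choice of $c$, it satisfies $F\cap\pi^{-1}(y_0)=\emptyset$. It then suffices to show that $\pi(F)$ is closed in $Y$. Once this is known, $W:=Y\setminus\pi(F)$ is an open neighbourhood of $y_0$. For every $y\in W$, the fibre $\pi^{-1}(y)$ misses $F$, so $f<c$ on $\pi^{-1}(y)$. By compactness of the fibre the supremum is attained, so $u(y)<c$ (or $u(y)=-\infty$ if the fibre is empty). This is exactly upper semicontinuity at $y_0$.

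The only nontrivial ingredient, and the main obstacle, is the closedness of $\pi(F)$, that is, that a proper map to a locally compact Hausdorff space is closed. I would prove this directly. Let $y\in\overline{\pi(F)}$ and choose a compact neighbourhood $C$ of $y$, which exists by local compactness. Then $\pi^{-1}(C)\cap F$ is compact, so its image $\pi(F)\cap C$ is compact in the Hausdorff space $Y$ and hence closed. Every neighbourhood of $y$ contained in $C$ meets $\pi(F)$, and therefore meets $\pi(F)\cap C$. Since $\pi(F)\cap C$ is closed, $y\in \pi(F)\cap C\subset\pi(F)$. This shows $\pi(F)$ is closed and completes the argument. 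Note that this uses only continuity of $\pi$ together with compactness of preimages of compact sets; no Hausdorffness of $X$ is needed.
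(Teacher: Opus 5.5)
Your proof is correct, but it takes a different route from the paper's.

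The paper first reduces to the case where $Y$ is compact, so that $X$ is compact. It then argues by contradiction using sequences. Assuming $\limsup_{y'\to y}u(y')>u(y)$, it chooses $y_i\to y$ with $u(y_i)>u(y)+2\varepsilon$, and points $x_i\in\pi^{-1}(y_i)$ with $f(x_i)+\varepsilon\geq u(y_i)$. It then extracts a convergent subsequence $x_i\to x$. Continuity of $\pi$ and $f$ gives $\pi(x)=y$ and $f(x)\geq u(y)+\varepsilon$, a contradiction. You instead prove directly that $\{u<c\}$ is open. You identify it with $Y\setminus\pi(\{f\geq c\})$; by compactness of fibres this is in fact an equality. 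You then prove the standard fact that a proper map into a locally compact Hausdorff space is closed.

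What your route buys is full generality. You need no countability assumptions, and empty fibres (where $u=-\infty$) cause no trouble. The paper's sequential argument implicitly needs more. It uses that $Y$ is first countable, so that $\limsup$ can be tested along sequences. It also uses that compact subsets of $X$ are sequentially compact, so that a convergent subsequence exists. Neither follows from the stated hypotheses, since a compact Hausdorff space need not be sequentially compact. In general the paper's argument must be rewritten with nets and subnets. In the intended application $Y=\Delta$ is metrizable, so only the subsequence extraction in $\calX^\hyb$ is at stake there. The paper's version offers mainly a concrete picture: it exhibits the limit point in the fibre over $y$ where the fibrewise supremum would be violated.
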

\begin{proof}
   Since $Y$ is locally compact and Hausdorff, the proper map $\pi$ is continuous.
   We may assume that $Y$ is compact, which particularly implies that $X$ is also compact. 
Fix $y \in Y$. We show that
$$
\limsup_{y'\to y} u(y') \leq u(y).
$$
Suppose, for contradiction, that $$\limsup_{y'\to y} u(y') > u(y).$$ Then there exists
$\varepsilon>0$ and a sequence $\{y_i\}_{i\in \Z_{>0}}$ converging to $y$ such that
$
u(y_i) > u(y) + 2\varepsilon$ for all $i$.
Since $u$ is defined as a supremum, for each $y_i$, we can choose
$x_i \in \pi^{-1}(y_i)$ such that
$$
f(x_i) + \varepsilon \geq u(y_i)>u(y)+2\varepsilon.
$$
Since $X$ is compact, the sequence  $\{x_i\}_{i\in \Z_{>0}}$ admits
a convergent subsequence. Let $x\in X$ be its limit. Since $\pi$ is continuous, we have $\pi(x)=y$. 
Furthermore, by
continuity of $f$, we also obtain
$$
f(x) \geq u(y) + \varepsilon,
$$
which contradicts the definition of $u(y)$. 
\end{proof}
Since $\pi^\hyb:\calX^\hyb\to \Delta$ is a continuous map from the compact space $\calX^\hyb$ to the Hausdorff space $\Delta$, the map $\pi^\hyb$ is proper. Hence \pref{lm: usc for sup} implies the following:

\begin{Prop}\label{pr: uniform estimate}
    Fix a hybrid cpsh metric $\phi$ on $\calL^\hyb$.
Suppose that there exists a sequence $\{\phi_k\}_{k\in \Z_{>0}}$ of hybrid cpsh metrics on $\calL^\hyb$ such that
the non-Archimedean estimate
$$
\sup_{X^\an} |\phi(0)-\phi_k(0)| = O(k^{-1})
$$ holds
for all sufficiently large $k$. Then, after possibly shrinking the radius of $\Delta$, the uniform estimate
$$
\sup_{\calX^\hyb} |\phi-\phi_k| = O(k^{-1})
$$
holds
for any such $k$.

\end{Prop}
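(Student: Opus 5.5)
The plan is to apply \pref{lm: usc for sup} separately for each fixed $k$, and then shrink the disc depending on $k$ to absorb the fibrewise error.

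Fix a sufficiently large $k$ for which the non-Archimedean estimate holds, say $\sup_{X^\an}|\phi(0)-\phi_k(0)|\leq Ck^{-1}$ with $C>0$ independent of $k$. The difference $\phi-\phi_k$ of two hybrid continuous metrics on $\calL^\hyb$ is a continuous metric on the trivial bundle. Hence, as noted after \pref{df: hybrid continuous metric}, it is a continuous function $-\log\|1\|_{\phi-\phi_k}$ on $\calX^\hyb$. Put $f_k:=|\phi-\phi_k|\in\mathscr{C}^0(\calX^\hyb)$.

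The map $\pi^\hyb\colon\calX^\hyb\to\Delta$ is proper, as observed just before the statement, and $\Delta$ is locally compact Hausdorff. So \pref{lm: usc for sup} shows that
$$u_k(t):=\sup_{x\in(\pi^\hyb)^{-1}(t)}f_k(x)$$
is upper semicontinuous on $\Delta$. Under $\calX^\hyb_0\simeq X^\an$ we have $u_k(0)=\sup_{X^\an}|\phi(0)-\phi_k(0)|\leq Ck^{-1}$.

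Upper semicontinuity at $0$, applied with tolerance $Ck^{-1}$, gives a radius $r_k>0$ such that $u_k(t)<u_k(0)+Ck^{-1}\leq 2Ck^{-1}$ for all $|t|_\infty<r_k$. Replacing $\Delta$ by the disc of radius $r_k$ then yields
$$\sup_{\calX^\hyb}|\phi-\phi_k|=\sup_{t}u_k(t)\leq 2Ck^{-1},$$
which is the claimed $O(k^{-1})$ with constant $2C$ independent of $k$. On the shrunken disc one can equally work over a slightly smaller closed disc so that compactness is preserved, and the restriction of a hybrid cpsh metric is still cpsh.

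The main subtlety is that $r_k$ depends on $k$. Upper semicontinuity is applied to a different function $f_k$ for each $k$, and nothing in the argument controls the $r_k$ uniformly. So the statement should be read as holding for each fixed such $k$ after a $k$-dependent shrinking, consistent with the remark after \pref{th: D}. I would make this quantifier order explicit rather than try to obtain a uniform radius, which would need an equicontinuity input in $t$ that is not available here.

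The only other point to check is that the compactness needed in \pref{lm: usc for sup} is satisfied. This follows from properness of $\calX$ over $\scrA$: $\calX^\hyb$ is compact and $\pi^\hyb$ is continuous. After shrinking, one restricts to the preimage of a closed subdisc, which is again compact.
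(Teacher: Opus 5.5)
Your proposal is correct and is essentially the paper's argument: for each fixed $k$, it applies \pref{lm: usc for sup} to the continuous function $|\phi-\phi_k|$ on $\calX^\hyb$, using properness of $\pi^\hyb$, and then shrinks the disc to a $k$-dependent radius $r_k$, which is exactly what the remark following the proposition acknowledges. Your explicit choice of the tolerance $Ck^{-1}$, which gives the $k$-independent constant $2C$, simply spells out a step the paper leaves implicit.
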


\begin{Rem}
The uniform estimate holds on the shrunken disc $\Delta_k$.
Denote by $r_k$ the radius of $\Delta_k$.
To conclude that the hybrid cpsh metrics $\phi_k$ converge to $\phi$ in the $\mathscr C^0$-topology as $k\to\infty$, we need 
\begin{equation}\label{eq: for hybrid convergence}
\liminf_{k} r_k>0.    
\end{equation}
However, it remains unclear whether this holds.
\end{Rem}

Applying \cite[Theorem~1.2]{Li25} to a maximally degenerating family $(\calX,\calL)$ of polarized Calabi--Yau varieties,
the family $\phi=\{\phi_t^\mathrm{CY}\}_{t\in \Delta^*}$ of Calabi--Yau metrics 
on $\calL=\{\calL_t\}_{t\in \Delta^*}$ extends to a hybrid cpsh metric $\phi^\mathrm{hyb}$
whose restriction to $X^\an$ is the NACY metric on $L^\an$. 
Hence \pref{pr: uniform estimate} implies the following:
\begin{Cor}\label{cr: unif estimate for CY metrics}
    If \pref{q: NA estimate for NACY} has an affirmative answer and the basis of $H^0(X,kL)$ used to describe the NA balanced metric $\phi_k$ can be lifted to sections of $H^0(\calX,k\calL)$, then the uniform estimate
    $$ \sup_{\calX_t} c(t)|\phi_t^\mathrm{CY} - c(t)^{-1} (\phi_k)_p^\hyb (t)|= O(k^{-1})$$
    holds
    for any $p\in(0,\infty]$ and all sufficiently small $t\in \Delta^*$, where $(\phi_k)_p^\hyb$ is the hybrid $L^p$-metric associated to the NA balanced metric $\phi_k$.
\end{Cor}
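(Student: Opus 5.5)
The plan is to combine \pref{pr: uniform estimate} with the two extension statements available to us. First, set up the hybrid metrics to which \pref{pr: uniform estimate} applies.

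Let $\phi:=\phi^\hyb$ be the hybrid cpsh extension of the family $\{\phi_t^\mathrm{CY}\}_{t\in\Delta^*}$ given by \cite[Theorem~1.2]{Li25}. Its restriction $\phi(0)$ to $\calX_0^\hyb\simeq X^\an$ is $\phi^\mathrm{NACY}$, and for $t\ne0$ we have $\phi(t)=c(t)\phi_t^\mathrm{CY}$. Next, write the NA balanced metric as $\phi_k=\frac1k\max_i\{\log|s_i|+\lambda_i\}$, with $\{s_i\}$ a basis of $H^0(X,kL)$. By hypothesis these $s_i$ lift to $H^0(\calX,k\calL)$.

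Since $\{s_i\}$ has no common zeros on $X$, the lifts have no common zeros on the central fibre. By properness of $\scrX\to\Delta$, the common zero locus is closed and misses a neighbourhood of the central fibre. So after shrinking $\Delta$ (depending on $k$), the lifts have no common zeros on $\calX$. \pref{eg: extendability of NAFS} then provides, for each $p\in(0,\infty]$, a hybrid cpsh metric $(\phi_k)_p^\hyb$ whose fibre at $0$ equals $\phi_k$ and whose fibre at $t\neq 0$ is the $L^p$ expression \eqref{eq: hybrid Lp metric}.

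Now apply \pref{pr: uniform estimate} to $\phi$ and the sequence $(\phi_k)_p^\hyb$. The affirmative answer to \pref{q: NA estimate for NACY} gives exactly
$$\sup_{X^\an}|\phi(0)-(\phi_k)_p^\hyb(0)|=\sup_{X^\an}|\phi^\mathrm{NACY}-\phi_k|=O(k^{-1}).$$
The proposition then yields $\sup_{\calX^\hyb}|\phi-(\phi_k)_p^\hyb|=O(k^{-1})$ after shrinking the disc. Restricting to the fibre over $t\in\Delta^*$ and identifying $\calX_t^\hyb\simeq\calX_t$ via $h_t$ gives
$$\sup_{\calX_t}|c(t)\phi_t^\mathrm{CY}-(\phi_k)_p^\hyb(t)|=\sup_{\calX_t}c(t)\,|\phi_t^\mathrm{CY}-c(t)^{-1}(\phi_k)_p^\hyb(t)|,$$
which is the claimed estimate for all sufficiently small $t$.

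The main step to check is the proof of \pref{pr: uniform estimate} itself, which the corollary relies on. Fix $k$ and set $u(t):=\sup_{\pi^{-1}(t)}|\phi-\phi_k|$. This function is upper semicontinuous by \pref{lm: usc for sup}, applied to the continuous function $|\phi-\phi_k|$ on the compact space $\calX^\hyb$ with the proper map $\pi^\hyb$. Since $u(0)\le Ck^{-1}$, upper semicontinuity gives $u(t)<2Ck^{-1}$ on a smaller disc $\Delta_k$, so the $O(k^{-1})$ bound survives with a doubled constant.

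The remaining points to handle carefully are the following. The shrinking must be done separately for each $k$, as the remark after \pref{pr: uniform estimate} warns, so the statement is only claimed for $t$ sufficiently small depending on $k$. The case $p=\infty$ is included by treating the tropical Fubini--Study metric as the hybrid $L^\infty$-metric. Finally, one needs continuity of $(\phi_k)_p^\hyb$ up to $t=0$, which is supplied by \cite[Lemma~3.6]{PS23} as cited in \pref{eg: extendability of NAFS}.
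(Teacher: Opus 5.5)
Your proposal follows the paper's proof: extend $\{\phi_t^{\mathrm{CY}}\}$ to a hybrid cpsh metric via \cite[Theorem~1.2]{Li25}, extend $\phi_k$ by the hybrid $L^p$-metric of \pref{eg: extendability of NAFS}, apply \pref{pr: uniform estimate} (upper semicontinuity from \pref{lm: usc for sup}, with a disc depending on $k$), and restrict to $\calX_t^\hyb\simeq\calX_t$. One correction: the shrinking that removes common zeros should rest on properness of $\pi^\hyb$ together with the empty common zero locus on $\calX^\hyb_0\simeq X^\an$ (or simply be cited from \pref{eg: extendability of NAFS}), not on properness of $\scrX\to\Delta$, because the lifts need not extend to the complex central fibre $\scrX_0$, or may have common zeros there.
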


\begin{Rem}\label{rm: another approximation of CY}
Recall that $\FS_k^\str (L^\an_\CK)$
carries a natural $\SL_{N_k}(\C_K)$-action, where $N_k=h^0(X,kL)$.
Then the action of $\SL_{N_k}(K)\subset \SL_{N_k}(\C_K)$ descends to an action on $\FS_k^\str (L^\an_\CK)|_L$.
Indeed, for any $g\in \SL_{N_k}( K)$ and any diagonalizable norm
$\nm\in \calN^\diag (V_k)$, where  $V_k:=H^0(X,kL)$, the induced norm $\nm_g$, 
given by $$||v||_g:=||gv||$$ for $v\in V_k$,
is again  diagonalizable.
In fact, if $\|\cdot\|$ is diagonalized
by a $K$-basis $\{s_i\}$ of $V_k$, then $\|\cdot\|_g$ is diagonalized
by the $K$-basis $\{g^{-1}s_i\}$.
Here,  the NA balanced metric $\phi_k$ can be written as
$$ \frac{1}{k}\max_i \{\log |s_i|+\lambda_i\}$$
for some basis $\{s_i\}$ of $V_k$ and some real numbers $\lambda_i\in \R$. 
By definition, the NA balanced metric $\phi_k$ is invariant under the action of $\SL_{N_k}( \C_K^\circ)$.
In particular, it is invariant under the action of $\SL_{N_k}( K^\circ \cap \scrA)$.
Here, by the definition of $\scrA$, we may evaluate $g\in \SL_{N_k}(K^\circ \cap \scrA)$ at
$t\in \Delta^*$, thereby obtaining $g(t)\in \SL(N_k,\C)$.
Since this $\SL_{N_k}( K^\circ \cap \scrA)$-action  does not affect the assumptions of \pref{cr: unif estimate for CY metrics}, for 
any $g\in \SL_{N_k}( K^\circ \cap \scrA)$, it follows from \pref{cr: unif estimate for CY metrics} that 
 $$ \sup_{\calX_t}c(t) |\phi_t^\mathrm{CY} - c(t)^{-1} (g^*\phi_k)_p^\hyb (t)|= O(k^{-1}).$$
When $p=2$, the metric $c(t)^{-1}(g^*\phi_k)_2^\hyb(t)$ yields a usual complex Fubini--Study metric on $\calL_t$ for each $t\in \Delta^*$. More explicitly, this metric corresponds to a Hermitian norm $\nm$ on $H^0(\calX_t,k\calL_t)$ diagonalized by $\{g^{-1}(t)s_i(t)\}$ with $||g^{-1}(t)s_i(t)||=e^{-\lambda_i}$ for each $i$. 
However, since $g(t)\in \SL_{N_k}( \C) \nsubseteq U(N_k)$, where $U(N_k)$ is the unitary group,
the action $g(t)$ may change $c(t)^{-1} (\phi_k)_2^\hyb (t)$.
In particular, the condition
\eqref{eq: for hybrid convergence} for the sequence $ \{(\phi_k)_2^\hyb\}$ does not imply
the one for $\{(g^*\phi_k)_2^\hyb\}$.
However,
if \eqref{eq: for hybrid convergence} holds for both sequences $ \{(\phi_k)_2^\hyb\}$ and  
$\{(g^*\phi_k)_2^\hyb\}$, then 
this action particularly yields another sequence of complex Fubini--Study metrics converging to the Calabi--Yau metric $\phi_t^\mathrm{CY}$ in the $\mathscr{C}^0$-topology for sufficiently small $t\in \Delta^*$. 
\end{Rem}

\subsection{Applications to totally degenerate abelian varieties}
In this subsection, 
we discuss the conditions imposed so far, specialized to the case of a
totally degenerating family $(\calX,\calL)$ of polarized abelian varieties.
That is, we consider a family 
$(\calX,\calL)$ of polarized abelian varieties over $\Delta^*$ or $\scrA$ such that the base change
$(X,L):=(\calX_K,\calL_K)$ is a polarized totally degenerate abelian variety.
Let $(\Lambda,a,b,\Phi)$ be the data associated to $L:=\calL_K$ as in \eqref{eq: data for L}.
By \cite[Lemma~2.6]{GO24}, the hybrid analytification $(\calX^\hyb,\calL^\hyb)$ admits a uniformization that extends the Mumford--Tate uniformization \eqref{eq: Mumford--Tate uniformization}. In particular, we may assume that the map
$b:\Lambda \times M\to K^\times$  takes values in
$\scrA^\times$ and
 obtain $$\calX\simeq T_\C \times \Delta^* /b^*(\Lambda),$$
 where $T_\C:=\Spec \C[M]$ is identified with the associated complex manifold $T_\C (\C) \simeq N\otimes_\Z \C^\times$.
Furthermore, we may  assume that 
the map $a:\Lambda\to K^\times$  also takes values in $\scrA^\times$.
Hence the classical theta function 
$$\theta^{(k)}(t)=\sum_{\lambda\in \Lambda} a(\lambda)^k(t) z^{k\Phi(\lambda)}$$ is well-defined for each $t\in \Delta^*$; see, for instance, \cite[\S~6]{GH78}.
Similarly, $\theta_{m_\mu}^{(k)}(t)$ is also well-defined.
This means that the basis of $H^0(X,kL)$ used to describe the NA balanced metric $\phi_k$ can be lifted to sections of $H^0(\calX,k\calL)$.
Hence, by \pref{lm: usc for sup} and \pref{cr: unif estimate for CY metrics}, we obtain the following:
\begin{Thm}
\label{th: approximation of CY by L^p-metrics}
For $k\geq 3$, after possibly shrinking the radius of $\Delta$,
     we obtain the uniform estimate
    $$ \sup_{\calX_t} c(t)|\phi_t^\mathrm{CY} - c(t)^{-1} (\phi_k)_p^\hyb (t)|= O(k^{-2})$$
    for any $p\in (0,\infty]$ and all sufficiently small $t\in \Delta^*$, where $(\phi_k)_p^\hyb$ is the hybrid $L^p$-metric associated to the NA balanced metric $\phi_k$.
\end{Thm}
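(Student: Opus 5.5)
The plan is to combine the non-Archimedean estimate of \pref{th: convergence of NA balanced metrics} with the extension results for hybrid metrics, and then transport the estimate to nearby fibers by the semicontinuity argument of \pref{pr: uniform estimate}.

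\textbf{Hybrid metrics on both sides.} By \cite[Theorem~1.2]{Li25}, the family $\{\phi^\mathrm{CY}_t\}_{t\in\Delta^*}$, rescaled by $c(t)$, extends to a hybrid cpsh metric $\phi^\hyb$ on $\calL^\hyb$ with $\phi^\hyb(0)=\phi^\mathrm{NACY}$. On the other side, the preceding discussion shows that $a,b$ take values in $\scrA^\times$. Hence each $\theta^{(k)}_\mu$ lifts to a section of $H^0(\calX,k\calL)$. The constants $\lambda_\mu:=\log|a^k_{k\Phi}(m_\mu)|_\CK=-kA_\Phi(m_\mu/k)$ from \eqref{eq: NAFS from theta} and \pref{pr: scale relation} are real numbers. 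Since the $\theta^{(k)}_\mu$ have no common zeros on $X$, after shrinking $\Delta$ they have no common zeros on $\calX$. \pref{eg: extendability of NAFS} then gives, for each $p\in(0,\infty]$, the hybrid $L^p$-metric $(\phi_k)^\hyb_p$, which is cpsh and satisfies $(\phi_k)^\hyb_p(0)=\phi_k$.

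\textbf{Non-Archimedean input and transport to nearby fibers.} By \pref{th: convergence of NA balanced metrics},
$$\sup_{X^\an}|\phi^\hyb(0)-(\phi_k)^\hyb_p(0)|=\sup_{X^\an}|\phi^\mathrm{NACY}-\phi_k|\le Ck^{-2}.$$
I then apply \pref{lm: usc for sup} to the continuous function $f:=|\phi^\hyb-(\phi_k)^\hyb_p|$ on the compact space $\calX^\hyb$ and the proper map $\pi^\hyb$. This shows that $u(t):=\sup_{\calX^\hyb_t}f$ is upper semicontinuous at $t=0$. So for any $\varepsilon>0$, say $\varepsilon=Ck^{-2}$, there is a disc $\Delta_k$ on which $u(t)\le u(0)+\varepsilon\le 2Ck^{-2}$. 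This is the argument of \pref{pr: uniform estimate}, with the exponent $k^{-1}$ replaced by $k^{-2}$.

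\textbf{Rescaling and the main obstacle.} Via $h_t$, the restrictions satisfy $\phi^\hyb(t)=c(t)\phi^\mathrm{CY}_t$ and $(\phi_k)^\hyb_p(t)$ as given. Hence $u(t)=\sup_{\calX_t}c(t)|\phi^\mathrm{CY}_t-c(t)^{-1}(\phi_k)^\hyb_p(t)|$, which is the claimed estimate.

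The main point to verify is that $f$ is genuinely continuous on all of $\calX^\hyb$. This needs two facts.
\begin{itemize}
\item The $L^p$-metrics built from the lifted theta functions are continuous hybrid metrics at $t=0$, with restriction exactly $\phi_k$. Here I would invoke \cite[Lemma~3.6]{PS23}, using that the lifted sections have no common zeros after shrinking $\Delta$.
\item The extension in \cite{Li25} is continuous rather than merely psh.
\end{itemize}
Once both hold, the remaining steps are formal. The radius of $\Delta_k$ depends on $k$, consistent with the remark after \pref{pr: uniform estimate}.
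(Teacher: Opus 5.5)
Your proposal is correct and follows essentially the same route as the paper. The paper likewise lifts the theta functions with characteristics to $H^0(\calX,k\calL)$ using that $a,b$ take values in $\scrA^\times$, forms the hybrid $L^p$-metrics of \pref{eg: extendability of NAFS}, extends the Calabi--Yau family via \cite[Theorem~1.2]{Li25}, and feeds the $O(k^{-2})$ bound of \pref{th: convergence of NA balanced metrics} into the semicontinuity argument of \pref{lm: usc for sup} (packaged as \pref{pr: uniform estimate} and \pref{cr: unif estimate for CY metrics}), with a disc radius depending on $k$. The two continuity facts you flag are exactly what the paper takes from \cite[Lemma~3.6]{PS23} and \cite{Li25}, so nothing is missing relative to the paper.
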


As observed in \eqref{eq: normalizing scale}, to define the normalized theta functions $\vartheta_\mu^{(k)}$ with characteristics, we need, in addition to $\theta_{m_\mu}^{(k)}$, the normalizing factor
$a_{k\Phi}^k(m_\mu)\in \overline{K}.$
Thus, for a fixed $k$, 
after taking a finite base change $K'/K$ such that $a_{k\Phi}^k(m_\mu)\in K'$ for all $\mu\in \coker k\Phi$, we may assume that
$\vartheta_\mu^{(k)}(t)$ is well defined for $t\in \Delta^*$.
By \cite[Theorem~2.1]{WZ17}, 
such sections $\left\{\vartheta_\mu^{(k)}(t)\right\}_{\mu\in \coker k\Phi}$ of $H^0(\calX,k\calL)$
yield
a family of complex balanced metrics on $\calL$ at level $k$ over $\Delta^*$.
In particular,
for any $t\in \Delta^*$, the restriction
$c(t)^{-1} (\phi_k)_2^\hyb (t)$ of the hybrid Bergman metric is the complex balanced metric on $\calL_t$ at level $k$.
As noted in \pref{rm: NA balanced is independent of finite base change}, our NA balanced metric is independent of the choice of finite base change.
 Therefore,
we obtain the following:
\begin{Thm}
\label{th: continuity of hybrid balanced metric}
For sufficiently large $k$ (in fact, for $k\geq 3$ in the present case),
after possibly making a finite base change, 
    there exists a hybrid cpsh metric $\phi_k^\hyb\in \CPSH (\calL^\hyb)$ such that,
    for each $t\in \Delta^*$, the restriction
    $\phi_{k,t}^\hyb:=c(t)^{-1}\phi_k^\hyb(t)$ is the complex balanced metric on $\calL_t$ at level $k$  and, for $t=0$, the restriction $\phi_k^\hyb(0)$ is the NA balanced metric on $L^\an$ at level $k$.
\end{Thm}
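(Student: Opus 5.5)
The plan is to take $\phi_k^\hyb$ to be the hybrid Bergman metric $(\phi_k)_2^\hyb$ of \pref{eg: extendability of NAFS}, built from the normalized theta functions. Fix $k\geq 3$, so that $kL$ is very ample by \cite[\S~17]{Mum70}. Take a finite extension $K'/K$ (a finite base change $t=s^r$ of the disc) containing all normalizing factors $a_{k\Phi}^k(m_\mu)$, $\mu\in\coker k\Phi$. After this base change we may assume, by \cite[Lemma~2.6]{GO24}, that $a$ and $b$ take values in $\scrA^\times$, and likewise the finitely many constants $a_{k\Phi}^k(m_\mu)$. Then each $\vartheta^{(k)}_\mu=a_{k\Phi}^k(m_\mu)\theta^{(k)}_\mu$ is a section in $H^0(\calX,k\calL)$, i.e. a holomorphic family $\vartheta^{(k)}_\mu(t)$ of classical theta functions with characteristics. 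These sections have no common zeros on the special fiber because they form a basis of a very ample linear system there. By compactness of $\calX^\hyb$ and upper semicontinuity of the common zero locus, they also have no common zeros over a possibly shrunken disc.

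Apply \eqref{eq: hybrid Lp metric} with $p=2$, the sections $\{\vartheta^{(k)}_\mu\}$ and all $\lambda_\mu=0$. This gives a hybrid cpsh metric $\phi_k^\hyb$ with
\[
\phi_k^\hyb(t)=\frac{1}{2k}\log\Bigl(\sum_\mu|\vartheta_\mu^{(k)}(t)|^2\Bigr)\quad (t\neq0),\qquad \phi_k^\hyb(0)=\frac1k\max_\mu\log|\vartheta^{(k)}_\mu|.
\]
For $t\neq0$, the identification of \eqref{eq: hyb Bergman metric} says that $c(t)^{-1}\phi_k^\hyb(t)$ is the Fubini--Study metric of the Hermitian norm on $H^0(\calX_t,k\calL_t)$ for which $\{\vartheta^{(k)}_\mu(t)\}$ is orthonormal. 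By \cite[Theorem~2.1]{WZ17}, this is exactly the complex balanced metric at level $k$, since the normalized theta basis is permuted by the finite Heisenberg action and is orthonormal for the balanced norm. For $t=0$, the restriction is $\varphi_k=(\phi_k)_{\C_K}$ of \eqref{eq: NA balanced metric at level k}, seen on $L^\an_{K'}$. Here the factors $a_{k\Phi}^k(m_\mu)$ lie in $K'$, so $\phi_k^\hyb(0)$ is the descent to $K'$ of $\phi_k$ in \eqref{eq: NAFS from theta}. \pref{th: theta functions and NA balanced metrics} shows this is NA balanced, and by \pref{rm: NA balanced is independent of finite base change} it equals the NA balanced metric on $L^\an$, independent of $K'$.

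The main obstacle I expect is matching normalizations. The algebraic normalization $a_\Phi(m)^2=b_\Phi(m,m)$ of \pref{pr: explicit form of a}, evaluated at $t$, must agree with the normalization under which \cite{WZ17} proves balancedness; otherwise the $\vartheta^{(k)}_\mu(t)$ are orthogonal but have unequal $L^2$ norms. I would check this first. Translation by the finite Heisenberg group $\coker k\Phi$ acts on $\{\vartheta^{(k)}_\mu(t)\}$ by permutation, by \eqref{eq: permutation at level k} specialized to $t$. It also preserves the flat reference metric, so all $\vartheta^{(k)}_\mu(t)$ have equal $L^2$ norms. Orthogonality comes from distinct characters of the finite torus action. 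Hence the basis is orthonormal up to a common constant, and a common constant does not affect the balanced condition, which is invariant under scaling the norm.
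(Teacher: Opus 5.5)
Your proposal is correct and follows essentially the same route as the paper: a finite base change so that the normalizing factors $a_{k\Phi}^k(m_\mu)$, and hence the $\vartheta_\mu^{(k)}(t)$, are defined over the family, then the hybrid Bergman metric built from $\{\vartheta_\mu^{(k)}\}$ with all $\lambda_\mu=0$. You then invoke \cite[Theorem~2.1]{WZ17} on the fibers over $\Delta^*$, and \pref{th: theta functions and NA balanced metrics} together with \pref{rm: NA balanced is independent of finite base change} at $t=0$. Your Heisenberg-invariance check of the normalization is a sound supplement that the paper leaves implicit; your disc-shrinking step can be dropped, since for each $t\in\Delta^*$ the $\vartheta_\mu^{(k)}(t)$ form a basis of the base-point-free system $|k\calL_t|$ and so have no common zeros on any fiber.
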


In particular,
\pref{th: continuity of hybrid balanced metric}  implies that the non-Archimedean balanced metric $\phi_k$ on $L^\an$ is obtained as the limit of the rescaled complex balanced metrics $c(t)\phi_{k,t}$ on $c(t)\calL_t$
as $t\to 0$. 
This suggests a possible alternative definition of non-Archimedean balanced metrics. Thus, studying the following question might provide an approach to \pref{q: existence of NA balanced}:

\begin{Q.}
Does \pref{th: continuity of hybrid balanced metric} hold for any degenerating family of polarized Calabi--Yau varieties? More generally, does it hold for any degenerating family of asymptotically Chow stable polarized varieties?
\end{Q.}
\begin{Rem}
This may be regarded as a finite-dimensional analogue of Li’s result for Calabi--Yau metrics \cite{Li25}. In particular, unlike in \pref{q: NA estimate for NACY}, when studying the relation with the complex setting, the given family $(\calX,\calL)$ carries Archimedean data in the sense of \cite{BJ17}. This allows us to define an appropriate NACY metric on $\calL_K^\an$ even when  $(\calX,\calL)$ is not maximally degenerate.
\end{Rem}

Combining \pref{th: convergence of NA balanced metrics} and \pref{th: continuity of hybrid balanced metric} with \cite[Theorem~3]{Don01} and \cite[Theorem~4.14]{GO24},
 the following holds for a
totally degenerating family 
$(\calX,\calL)$
of polarized 
abelian varieties over $\Delta^*$:
$$\phi^\mathrm{CY}_t=\lim_{k\to \infty} \phi_{k,t}, \quad \phi_k=\lim_{t\to 0} c(t)\phi_{k,t}, \quad \phi^\mathrm{NACY}=\lim_{k\to \infty} \phi_{k}, \quad \phi^\mathrm{NACY}=\lim_{t\to 0} c(t)\phi_{t}^\mathrm{CY},$$
where
\begin{itemize}
    \item $\phi_{k,t}$ is the complex balanced metric on $\calL_t$ at level $k$,
    \item $\phi_t^\mathrm{CY}$ is the Calabi--Yau metric on $\calL_t$,
    \item $\phi_k$ is the NA balanced metric on $L^\an$, 
    \item $\phi^\mathrm{NACY}$ is the NACY metric on $L^\an$.
\end{itemize}
Note that only the equality $\phi_k=\lim_{t\to 0} c(t)\phi_{k,t}$ may require a finite base change.

\end{document}